\documentclass{amsart}

\usepackage{amsmath,amssymb,amscd,amsthm,amsxtra}

\usepackage{graphicx, mathrsfs}
\usepackage{amssymb,amsmath,amsthm}
\usepackage{mathrsfs,dsfont}
\usepackage{hyperref}

%WIDE CHECK
\makeatletter
\DeclareRobustCommand\widecheck[1]{{\mathpalette\@widecheck{#1}}}
\def\@widecheck#1#2{%
   \setbox\z@\hbox{\m@th$#1#2$}%
   \setbox\tw@\hbox{\m@th$#1%
      \widehat{%
         \vrule\@width\z@\@height\ht\z@
         \vrule\@height\z@\@width\wd\z@}$}%
   \dp\tw@-\ht\z@
   \@tempdima\ht\z@ \advance\@tempdima2\ht\tw@ \divide\@tempdima\thr@@
   \setbox\tw@\hbox{%
      \raise\@tempdima\hbox{\scalebox{1}[-1]{\lower\@tempdima\box\tw@}}}%
   {\ooalign{\box\tw@ \cr \box\z@}}}
\makeatother

\headheight=8pt
\topmargin=0pt
\textheight=624pt
\textwidth=432pt
\oddsidemargin=18pt
\evensidemargin=18pt

\newtheorem{theorem}{Theorem} [section]

\newtheorem{lemma}[theorem]{Lemma}
\newtheorem{proposition}[theorem]{Proposition}
\newtheorem{remark}[theorem]{Remark}

\newtheorem{corollary}[theorem]{Corollary}

\begin{document}
\title[Bounds on Sobolev norms for 2D Hartree Equations]{Bounds on the growth of high Sobolev norms of solutions to 2D Hartree Equations}
\author{Vedran Sohinger}
\address{Department of Mathematics\\
Massachusetts Institute of Technology\\ 77 Massachusetts Avenue,  Cambridge, MA
02139 }
\email{\tt vedran@math.mit.edu}

\subjclass[2010]{35Q55}
\keywords{Hartree Equation, Nonlinear Schr\"{o}dinger Equation, Growth of high Sobolev norms, Resonant decomposition}

\maketitle

\begin{abstract}

In this paper, we consider Hartree-type equations on the two-dimensional torus and on the plane. We prove polynomial bounds on the growth of high Sobolev norms of solutions to these equations. The proofs of our results are based on the adaptation to two dimensions of the techniques we had previously used in \cite{SoSt1,SoSt2} to study the analogous problem in one dimension. Since we are working in two dimensions, a more detailed analysis of the resonant frequencies is needed, as was previously used in the work of Colliander-Keel-Staffilani-Takaoka-Tao \cite{CKSTT7}.

\end{abstract}

\section{Introduction.}

\subsection{Statement of the problem and of the main results:}

In this paper, we study the 2D Hartree initial value problem:

\begin{equation}
\label{eq:Hartree}
\begin{cases}
i u_t + \Delta u=(V*|u|^2)u,\, x \in \mathbb{T}^2\,\mbox{or}\,\, x \in \mathbb{R}^2,\, t \in \mathbb{R}\\
u|_{t=0}=\Phi \in H^s(\mathbb{T}^2),\,\mbox{or}\,\,\Phi \in H^s(\mathbb{R}^2),\,s>1.
\end{cases}
\end{equation}
\vspace{2mm}
The assumptions that we have on $V$ are the following:

\begin{enumerate}
\item[(i)] $V \in L^1(\mathbb{T}^2)$, or $V \in L^1(\mathbb{R}^2)$, respectively.
\item[(ii)] $V \geq 0$.
\item[(iii)] $V$ is even.
\end{enumerate}

\vspace{2mm}

The Hartree equation arises naturally in the dynamics of large quantum systems. It occurs in the context of the mean-field limit of $N$-body dynamics when we take $V$ to be the interaction potential \cite{FL,Sch}. It makes physical sense to consider this equation both in the periodic, and in the non-periodic setting.

\vspace{2mm}

The equation $(\ref{eq:Hartree})$ has the following conserved quantities:

$$M(u(t)):=\int |u(x,t)|^2 dx,\,\mbox{\emph{(Mass)}}$$

$$E(u(t)):=\frac{1}{2} \int |\nabla u(x,t)|^2 dx + \frac{1}{4} \int (V*|u|^2)(x,t)|u(x,t)|^2 dx,\,\mbox{\emph{(Energy)}}.$$

\vspace{2mm}

The region of integration is either $\mathbb{T}^2$ or $\mathbb{R}^2$, depending whether we are considering the periodic or the non-periodic setting. The fact that mass is conserved follows from the fact that $V$ is real-valued. The fact that energy is conserved follows from integration by parts, by using the fact that $V$ is even \cite{Ca}.

\vspace{2mm}

By using the two conservation laws, and by arguing as in \cite{GV5}, we can deduce global existence of $(\ref{eq:Hartree})$ in $H^1$ and a priori bounds on the $H^1$ norm of a solution, in the non-periodic setting. By persistence of regularity, we obtain global existence in $H^s$, for $s>1$. Hence, it makes sense to analyze the behavior of $\|u(t)\|_{H^s}$. A similar argument holds in the periodic setting, whereas here, we need to use periodic variants of Strichartz estimates \cite{B}.

\vspace{3mm}

Given a real number $x$, we denote by $x+$ and $x-$ expressions of the form $x+\epsilon$ and $x-\epsilon$ respectively, where $0<\epsilon \ll 1$. With this notation, the result that we prove for $(\ref{eq:Hartree})$ on $\mathbb{T}^2$ is:

\vspace{3mm}

\begin{theorem}(Bound for the Hartree equation on $\mathbb{T}^2$)
\label{Theorem 1}
Let $u$ be the global solution of $(\ref{eq:Hartree})$ on $\mathbb{T}^2$. Then, there exists a function $C_s$, continuous on $H^1(\mathbb{T}^2)$ such that for all $t \in \mathbb{R}:$
\begin{equation}
\label{eq:hartreetorusbound}
\|u(t)\|_{H^s(\mathbb{T}^2)}\leq
C_s(\Phi)(1+|t|)^{s+}\|\Phi\|_{H^s(\mathbb{T}^2)}.
\end{equation}
\end{theorem}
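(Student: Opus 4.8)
The plan is to adapt the ``upside-down $I$-method'' of Colliander–Keel–Staffilani–Takaoka–Tao and Sohinger–Staffilani to the two-dimensional Hartree setting. First I would introduce a high-frequency multiplier operator $D$ that acts as the identity on frequencies $|n|\lesssim N$ and as roughly $(|n|/N)^{s}$ on frequencies $|n|\gtrsim N$, where $N \gg 1$ is a parameter to be optimized at the end, and set $u_1 := Du$. The key point is that $\|u_1(t)\|_{L^2}^2 \sim N^{-2s}\|u(t)\|_{H^s}^2$ up to lower-order terms controlled by mass conservation, so it suffices to track the growth of $E_1(t) := \|u_1(t)\|_{L^2}^2$. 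Differentiating in $t$ and using the equation, the quadratic-in-$u_1$ terms cancel by the real/even structure of $V$, leaving a multilinear expression in $\widehat{u}$; passing to the interaction (Duhamel) representation, I would write $\frac{d}{dt}E_1(t)$ as a sum over frequency quadruples $n_1 - n_2 + n_3 - n_4 = 0$ weighted by $\widehat{V}$ and by a multiplier that is nontrivial only when at least one frequency is $\gtrsim N$.

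Next I would run one step of normal-form / differentiation-by-parts reduction in time: integrating $\int_{t_0}^{t_0+1}\frac{d}{dt}E_1$ and integrating by parts using the oscillatory factor $e^{i(|n_1|^2 - |n_2|^2 + |n_3|^2 - |n_4|^2)t}$, one gains a factor of the resonance function $|n_1|^2 - |n_2|^2 + |n_3|^2 - |n_4|^2$ in the denominator on the nonresonant part, at the cost of producing boundary terms and a higher (sextilinear) term from substituting the equation back in. The heart of the two-dimensional argument — and the place where the one-dimensional treatment genuinely has to be upgraded — is the analysis of the resonant set $\{n_1 - n_2 + n_3 - n_4 = 0,\ |n_1|^2 - |n_2|^2 + |n_3|^2 - |n_4|^2 = 0\}$; in 2D this set is much larger than in 1D (it contains, e.g., all rectangles), so following \cite{CKSTT7} I would parametrize it geometrically (the two conditions force $n_1, n_2, n_3, n_4$ to be vertices of a rectangle) and exploit the fact that on this resonant set the multiplier difference $M(n_1) - M(n_2) + M(n_3) - M(n_4)$ enjoys extra cancellation, gaining a power of $N$. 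I expect this resonant analysis to be the main obstacle, since one must combine the number-theoretic structure of lattice rectangles with the smoothing from $\widehat{V} \in \ell^\infty$ (equivalently $V \in L^1$) to close the estimate.

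With these inputs, a typical ``local-in-time'' increment bound of the shape
\begin{equation*}
E_1(t_0+1) \lesssim E_1(t_0) + N^{-\beta}\,\|u\|_{H^s}^{2}\cdot(\text{powers of the conserved }H^1\text{ norm})
\end{equation*}
for some $\beta > 0$ should follow, where I have used the $H^1$ a priori bound guaranteed in the excerpt to absorb all factors except the two copies of the high Sobolev norm. Iterating this on $O(|t|)$ unit time intervals gives $E_1(t) \lesssim E_1(0) + |t|\,N^{-\beta}\sup_{[0,t]}\|u\|_{H^s}^2$, and translating back through $\|u(t)\|_{H^s}^2 \lesssim N^{2s}E_1(t) + \|\Phi\|_{L^2}^2$ yields, after a Gronwall/bootstrap argument, $\|u(t)\|_{H^s}^2 \lesssim \|\Phi\|_{H^s}^2\big(1 + |t| N^{2s-\beta}\big)$. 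Finally I would choose $N \sim (1+|t|)^{1/\beta}$ to balance the two terms, producing $\|u(t)\|_{H^s} \lesssim (1+|t|)^{(2s-\beta)/(2\beta)+}\|\Phi\|_{H^s}$; optimizing the bookkeeping of $\epsilon$-losses in the multilinear estimates so that the exponent comes out to exactly $s+$ is the remaining technical chore, and the continuity of the constant $C_s$ on $H^1(\mathbb{T}^2)$ falls out because every implicit constant depended only continuously on $\|\Phi\|_{H^1}$ through the conservation laws.
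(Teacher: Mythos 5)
Your overall plan (upside-down $I$-operator $\mathcal{D}$, a higher modified energy obtained by adding a quadrilinear correction, iterate a slowly-varying bound, choose $N$ depending on the time horizon) is the same as the paper's. But two of your steps would fail as written. First, the claim that ``on this resonant set the multiplier difference $M(n_1)-M(n_2)+M(n_3)-M(n_4)$ enjoys extra cancellation'' is backwards, and it is exactly the failure of this cancellation that makes 2D harder than 1D. The paper explicitly points out (after $(\ref{eq:torusdenominator})$) that $(\theta(n_1))^2-(\theta(n_2))^2+(\theta(n_3))^2-(\theta(n_4))^2$ can be nonzero on the set $\{n_{12}\perp n_{14}\}$, so one cannot cancel the whole quadrilinear term by a normal form. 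The remedy is a \emph{soft} angular cutoff: split $\Gamma_4$ by $|\cos\angle(n_{12},n_{14})|\lessgtr\beta_0$ with $\beta_0\sim N^{-1}$, put only the nonresonant part into $M_4$, and estimate the near-orthogonal quadrilinear piece directly. The smallness there is not from lattice-rectangle number theory but from the Double Mean Value Theorem bound (Lemma~\ref{toruspointwisebound}, Case~3), which yields the factor $|n_{12}||n_{14}|\theta(N_1^*)\theta(N_2^*)/(N_1^*)^2$, combined with the geometric observation that near-orthogonality forces $N_1\sim N_2$ and hence $|n_{12}|=|n_{34}|\lesssim N_3^*$.

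Second, your iteration scheme does not close. You write the increment as $E_1(t_0+1)\lesssim E_1(t_0)+N^{-\beta}\|u\|_{H^s}^2$ and then substitute $\|u\|_{H^s}^2\lesssim N^{2s}E_1$; this makes each step grow by a factor $1+N^{2s-\beta}$, so the bootstrap only survives for $|t|\lesssim N^{\beta-2s}$, which for $\beta\approx 1$ and $s>1/2$ is vacuous. The crucial point in Lemma~\ref{Bigstartorus} is that the increment is bounded by $N^{-(1-)}E^2(u(t_0))$, \emph{not} by $N^{-(1-)}\|u(t_0)\|_{H^s}^2$ (these differ by $N^{2s}$). This stronger, $E^2$-relative bound is exactly what the local-in-time estimate $(\ref{eq:properties of v3})$, $\|\mathcal{D}v\|_{X^{0,1/2+}}\lesssim\|\mathcal{D}u(t_0)\|_{L^2}$, delivers: each multilinear estimate puts $\mathcal{D}$ on the two high frequencies, giving $\|\mathcal{D}v\|_{X^{0,1/2+}}^2\sim E^2(u(t_0))$ rather than $\|u(t_0)\|_{H^s}^2$. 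With the correct increment one can iterate $\sim N^{1-}$ steps of length $\delta$ with no growth of $E^2$, and then $\|u(T)\|_{H^s}\lesssim N^s\|\Phi\|_{H^s}\lesssim T^{s+}\|\Phi\|_{H^s}$; your final exponent $(2s-\beta)/(2\beta)$ is also off (it should be $s/\beta$), but that is a minor algebra slip compared with the iteration issue.
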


\vspace{2mm}

Similarly, in the non-periodic setting one has:

\vspace{3mm}

\begin{theorem}
\label{Theorem 2}(Bound for the Hartree equation on $\mathbb{R}^2$)
Let $u$ be the global solution of $(\ref{eq:Hartree})$ on $\mathbb{R}^2$. Then, there exists a function $C_s$, continuous on $H^1(\mathbb{R}^2)$ such that for all $t \in \mathbb{R}:$
\begin{equation}
\label{eq:hartreeplanebound}
\|u(t)\|_{H^s(\mathbb{R}^2)}\leq
C_s(\Phi)(1+|t|)^{\frac{4}{7}s+}\|\Phi\|_{H^s(\mathbb{R}^2)}.
\end{equation}
\end{theorem}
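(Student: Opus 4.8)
The plan is to follow the same strategy as in the periodic case (Theorem \ref{Theorem 1}), but to exploit the additional dispersion available on $\mathbb{R}^2$ in order to improve the exponent from $s+$ to $\frac{4}{7}s+$. The starting point is the \emph{upside-down $I$-method}: one builds an operator $D$ which agrees with the identity on low frequencies and with a derivative of order $s-1$ on high frequencies, so that $\|Du(t)\|_{L^2}^2$ is comparable (up to the conserved $H^1$ mass/energy) to $\|u(t)\|_{H^s}^2$. Differentiating this ``modified energy'' in time and using the equation, the main term is a multilinear expression in the Fourier coefficients of $u$ whose symbol vanishes when all frequencies coincide (because $D$ commutes with the linear flow up to commutators), so that the integrand carries a factor measuring the failure of the frequencies to be resonant. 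I would then iterate this cancellation once, passing to a ``second generation'' modified energy $E_2(u(t)) = \|Du(t)\|_{L^2}^2 + (\text{a correction multilinear term } \Lambda_4)$, chosen exactly so that the worst quartic contribution is cancelled by the time derivative of $\Lambda_4$, leaving only higher-order (sextic) terms and a better-behaved quartic remainder.

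The next step is to estimate $\left| \frac{d}{dt} E_2(u(t)) \right|$ on a time interval of length $1$. This is where the difference between $\mathbb{T}^2$ and $\mathbb{R}^2$ enters: on the plane one has the full range of Strichartz estimates without derivative loss, in particular the $L^4_{t,x}$ Strichartz estimate, together with bilinear refinements (transversality / improved bilinear Strichartz) that are unavailable — or only available with loss — on the torus. Using these, together with the resonance-detecting factor in the symbol, one shows a local-in-time increment bound of the shape $\left| E_2(u(t_0+1)) - E_2(u(t_0)) \right| \lesssim N^{-\beta}\, E_2(u(t_0)) + (\text{lower order})$, where $N$ is the frequency threshold in the definition of $D$ and $\beta>0$ quantifies the gain; crucially the gain $\beta$ coming from the bilinear Strichartz estimate on $\mathbb{R}^2$ is strictly larger than the corresponding gain on $\mathbb{T}^2$, and tracking the arithmetic of these exponents is what yields the coefficient $\frac47$ rather than $1$. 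One must also control $E_2$ versus $\|Du\|_{L^2}^2$: the correction term $\Lambda_4$ is shown to be bounded by $N^{-\gamma}\|Du\|_{L^2}^2$ plus controlled quantities, so that $E_2$ and $\|u(t)\|_{H^s}^2$ are comparable uniformly, and low frequencies are handled trivially by mass and energy conservation.

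Finally, one runs the iteration over $\sim |t|$ unit time steps. Summing the increment bound gives $E_2(u(t)) \lesssim E_2(\Phi) + |t|\, N^{-\beta} \sup_{[0,t]} E_2(u(\tau))$, and combining with the comparability $E_2 \approx \|u\|_{H^s}^2 \approx N^{2(s-1)}\|\cdot\|_{H^1-\text{scaled}}^2$ one arrives, after optimizing the free parameter $N$ against $|t|$, at a bound $\|u(t)\|_{H^s} \lesssim C_s(\Phi)(1+|t|)^{r}\|\Phi\|_{H^s}$ with $r = \frac{4}{7}s+$; the continuity of $C_s$ on $H^1(\mathbb{R}^2)$ follows because every constant appearing in the multilinear estimates depends only continuously on the conserved $H^1$ norm. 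I expect the main obstacle to be the resonant-frequency analysis: in two dimensions the set of frequency quadruples $(n_1,n_2,n_3,n_4)$ with $n_1-n_2+n_3-n_4=0$ and $|n_1|^2-|n_2|^2+|n_3|^2-|n_4|^2=0$ is genuinely two-parameter and geometrically rich (rectangles in $\mathbb{Z}^2$, degenerate configurations, near-resonances), so one has to carry out a careful case analysis — transverse versus parallel interactions, small versus large frequency gaps — and combine the elementary number-theoretic/geometric counting with the bilinear Strichartz smoothing to extract the gain uniformly; balancing the losses in the hardest near-resonant case against the gains elsewhere is what ultimately pins down the exponent $\frac47$.
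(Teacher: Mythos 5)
Your overall framework matches the paper's: upside-down $I$-operator $\mathcal{D}$, a second modified energy $E^2=\|\mathcal{D}u\|_{L^2}^2+\lambda_4(M_4;u)$ with a quartic Fourier correction, bilinear Strichartz refinements available only on $\mathbb{R}^2$, and iteration of the resulting increment bound. However, the step you yourself flag as ``the main obstacle'' --- the two-dimensional resonance analysis --- is where your proposal has a genuine gap, and it is not merely a matter of tracking constants.

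In one dimension the multiplier $|\xi_1|^2-|\xi_2|^2+|\xi_3|^2-|\xi_4|^2$ vanishes on $\Gamma_4$ only when the numerator $(\theta(\xi_1))^2-(\theta(\xi_2))^2+(\theta(\xi_3))^2-(\theta(\xi_4))^2$ also vanishes, so one can cancel the entire quartic term by the choice of $M_4$. In two dimensions $|\xi_1|^2-|\xi_2|^2+|\xi_3|^2-|\xi_4|^2=2\,\xi_{12}\cdot\xi_{14}$ vanishes whenever $\xi_{12}\perp\xi_{14}$, even when the $\theta$-numerator does not, so the naive $M_4$ blows up and the full cancellation is impossible. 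The paper's fix (following \cite{CKSTT7}) is to split $\Gamma_4$ into a non-resonant set $\{|\cos\angle(\xi_{12},\xi_{14})|>\beta_0\}$ --- where $M_4$ is defined and bounded by $\beta_0^{-1}(N_1^*)^{-2}\theta(N_1^*)\theta(N_2^*)$ --- and the residual near-resonant set $\{|\cos\angle(\xi_{12},\xi_{14})|\le\beta_0\}$, which is left uncancelled in $\frac{d}{dt}E^2$. The residual quartic term is then estimated using an angular bilinear Strichartz estimate (Proposition~\ref{AngleImprovedStrichartz}), which gives a factor $\theta_0^{1/2}$ when the interacting frequencies are nearly orthogonal; combining this with the ordinary improved bilinear Strichartz and a further dyadic parameter $\gamma$ (splitting $N_3\lesssim N_1^{\gamma}$ versus $N_3\gtrsim N_1^{\gamma}$) gives the increment bound of size $\beta_0^{1/2}N^{-3/2+}+N^{-(3/2+\gamma/2)+}+N^{-(2-\gamma/2)+}$ for the quartic piece and $\beta_0^{-1}N^{-5/2+}$ for the sextic piece. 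Optimizing $\gamma=1/2$ and $\beta_0\sim N^{-\alpha}$ with $\alpha\in[1/2,3/4]$ is exactly what yields the gain $N^{-7/4+}$ and hence the exponent $\frac{4}{7}s$. Without the $\beta_0$-decomposition of $\Gamma_4$ and the angular bilinear estimate, your argument cannot define $M_4$ at all on the resonant set, and it has no mechanism to extract the extra gain from near-orthogonal interactions, so it would stall at (at best) the periodic exponent.

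Two smaller inaccuracies: the operator $\mathcal{D}$ is a symbol of order $s$ above frequency $N$ (normalized by $N^{-s}$), not order $s-1$, and $\|\mathcal{D}u\|_{L^2}$ is \emph{not} comparable to $\|u\|_{H^s}$ --- they differ by up to a factor $N^s$, cf.\ $(\ref{eq:bound on Dplane})$; the conclusion comes from $\|u(T)\|_{H^s}\lesssim N^s\|\mathcal{D}u(T)\|_{L^2}\lesssim N^s\|\mathcal{D}\Phi\|_{L^2}\lesssim N^s\|\Phi\|_{H^s}$ with $T\sim N^{7/4-}$, not from an approximate identity between $E^2$ and $\|u\|_{H^s}^2$. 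Also the iteration runs on time intervals of length $\delta(E(\Phi),M(\Phi))$ from the local theory (Proposition~\ref{Proposition 4.4}), not length $1$, though that only affects the constant.
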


\vspace{2mm}

Heuristically, we expect to get a better bound in the non-periodic setting, due to the presence of stronger dispersion.

In the non-periodic setting, let us formally take $V=\delta$. Then, $(\ref{eq:Hartree})$ becomes:

\begin{equation}
\label{eq:CubicNLS}
\begin{cases}
i u_t + \Delta u=|u|^2u, x \in \mathbb{R}^2, t \in \mathbb{R}\\
u|_{t=0}=\Phi \in H^s(\mathbb{R}^2),\,s>1.
\end{cases}
\end{equation}
\vspace{2mm}

The Cauchy problem $(\ref{eq:CubicNLS})$ is also known to be globally well-posed in $H^s$ \cite{GV4}.
We will see that the proof of Theorem \ref{Theorem 2} holds when we formally take $V=\delta$.
Hence, we also deduce the following:

\begin{corollary}
\label{CubicNLScorollary}(Bound for the Cubic NLS on $\mathbb{R}^2$)
Let $u$ be the global solution of $(\ref{eq:CubicNLS})$. Then, there exists a function $C_s$, continuous on $H^1(\mathbb{R}^2)$ such that for all $t \in \mathbb{R}:$
\begin{equation}
\label{eq:cubicnlsplanebound}
\|u(t)\|_{H^s(\mathbb{R}^2)}\leq
C_s(\Phi)(1+|t|)^{\frac{4}{7}s+}\|\Phi\|_{H^s(\mathbb{R}^2)}.
\end{equation}
\end{corollary}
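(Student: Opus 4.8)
The plan is to revisit the proof of Theorem~\ref{Theorem 2} and observe that the potential $V$ enters that argument \emph{only} through its Fourier transform $\widehat V$, and in fact only through the pointwise bound $|\widehat V(\eta)|\le\|\widehat V\|_{L^\infty}\le\|V\|_{L^1}$; the hypotheses (ii)--(iii) are used only to produce a conserved, coercive energy. Formally setting $V=\delta$ gives $\widehat V\equiv 1$, which is bounded, real-valued, and even, with $\delta\ge 0$ as a measure, while the nonlinearity $(V*|u|^2)u$ collapses to $|u|^2u$. So the task reduces to re-running the proof of Theorem~\ref{Theorem 2} with every factor of $\widehat V$ replaced by $1$ and checking that nothing breaks.

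Concretely, I would first record the inputs: \eqref{eq:CubicNLS} is globally well-posed in $H^s$, $s>1$, by \cite{GV4}, and the defocusing energy $E(u)=\tfrac12\|\nabla u\|_{L^2}^2+\tfrac14\|u\|_{L^4}^4$ together with mass conservation bounds $\|u(t)\|_{H^1}$ by a continuous function of $\|\Phi\|_{H^1}$ --- exactly the role played by the Hartree energy for Theorem~\ref{Theorem 2}. Then I would carry over the upside-down $I$-method setup from \cite{SoSt1,SoSt2}: one uses the Fourier representation $\widehat{|u|^2u}(\xi)=\int_{\xi=\xi_1-\xi_2+\xi_3}\widehat u(\xi_1)\overline{\widehat u(\xi_2)}\,\widehat u(\xi_3)$, which is precisely the $V=\delta$ specialization of the Hartree multiplier $\widehat V(\xi_1-\xi_2)$, and defines the same higher modified energy $E^2(u)\sim\|u\|_{H^s}^2$ via the same multilinear corrections. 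Since in Theorem~\ref{Theorem 2} each occurrence of $\widehat V(\xi_1-\xi_2)$ is simply estimated by $\|\widehat V\|_{L^\infty}$, the underlying bilinear and multilinear Strichartz estimates, and hence the almost-conservation bound for $E^2(u)$, hold verbatim with the constant $1$. Iterating this almost-conservation law on unit-length time intervals, together with $E^2(u)\sim\|u\|_{H^s}^2$, then yields \eqref{eq:cubicnlsplanebound} with the same exponent $\tfrac47 s+$ as in Theorem~\ref{Theorem 2}.

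The one place that genuinely requires care --- and the step I would check most carefully --- is that no estimate in the proof of Theorem~\ref{Theorem 2} exploits decay or $L^p$-integrability ($p<\infty$) of $\widehat V$, since $\widehat\delta\equiv 1$ has neither. In particular the resonant-frequency analysis borrowed from \cite{CKSTT7} partitions frequency interactions using only the Schr\"odinger dispersion relation $|\xi_1|^2-|\xi_2|^2+|\xi_3|^2-|\xi_4|^2$ and the convolution constraint $\xi_1-\xi_2+\xi_3-\xi_4=0$, so $V$ does not enter it and it transfers unchanged. Granting this, the cubic NLS is just the endpoint $\|\widehat V\|_{L^\infty}=1$ of the family of bounds already proved, and Corollary~\ref{CubicNLScorollary} follows; the exponent $\tfrac47 s+$ improves the earlier bound of \cite{CDKS}.
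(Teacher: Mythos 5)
Your proposal is correct and takes essentially the same route as the paper: the paper itself only gives the one-line remark that ``the proof of Theorem~\ref{Theorem 2} holds when we formally take $V=\delta$,'' and your argument fills in precisely the right justification — that $V$ enters the proof of Theorem~\ref{Theorem 2} only via $|\widehat V|\lesssim 1$ and via producing a coercive conserved energy, both of which survive the substitution $\widehat V\equiv 1$, while the resonant decomposition and the bilinear/angular Strichartz machinery are $V$-independent. Nothing to add.
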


This improves the previously known bound $\|u(t)\|_{H^s} \lesssim (1+|t|)^{\frac{2}{3}s+} \|\Phi\|_{H^s}$, for all $s \in \mathbb{N}$. This bound was proved in \cite{CDKS}. As was mentioned in the introduction, after the submission of our paper, it was proven in \cite{D} that $(\ref{eq:CubicNLS})$ scatters in $L^2$, which implies that $(\ref{eq:cubicnlsplanebound})$ can be replaced by a uniform bound in time.

Similarly, we can take $V=\delta$ in the periodic setting. However, in this way, we obtain the bound $\|u(t)\|_{H^s} \lesssim (1+|t|)^{s+} \|\Phi\|_{H^s}$, which had been proved in \cite{Z} under the additional assumption that $s \in \mathbb{N}$.

\subsection{Motivation for the problem and previously known results:}

The growth of high Sobolev norms has a physical interpretation in the context of the \emph{Low-to-High frequency cascade}. In other words, we see that $\|u(t)\|_{H^s}$ weighs the higher frequencies more as $s$ becomes larger, and hence its growth gives us a quantitative estimate for how much of the support of $|\widehat{u}|^2$ has transferred from the low to the high frequencies. This sort of problem also goes under the name \emph{weak turbulence} \cite{BN,BS,Zak}.

\vspace{3mm}

By local well-posedness theory \cite{B3,Ca,GV5,Tao}, it can be observed that there exist $C,\tau_0>0$, depending only on the initial data $\Phi$ such that for all $t$:

\begin{equation}
\label{eq:ExponentialIteration}
\|u(t+\tau_0)\|_{H^s}\leq C\|u(t)\|_{H^s}.
\end{equation}
Iterating (\ref{eq:ExponentialIteration}) yields the exponential bound:
\begin{equation}
\label{eq:ExponentialBound}
\|u(t)\|_{H^s}\leq C_1 e^{C_2|t|}.
\end{equation}
Here, $C_1,C_2>0$ again depend only on $\Phi$.

\vspace{3mm}

For a wide class of nonlinear dispersive equations, the analogue of (\ref{eq:ExponentialBound}) can be improved to a polynomial bound, as long as we take $s \in \mathbb{N}$, or if we consider sufficiently smooth initial data. This observation was first made in the work of Bourgain \cite{B2}, and was continued in the work of Staffilani \cite{S,S2}.

\vspace{3mm}

The crucial step in the mentioned works was to improve the iteration bound (\ref{eq:ExponentialIteration}) to:

\begin{equation}
\label{eq:PolynomialIteration}
\|u(t+\tau_0)\|_{H^s}\leq \|u(t)\|_{H^s} + C \|u(t)\|_{H^s}^{1-r}.
\end{equation}

\vspace{2mm}

As before, $C,\tau_0>0$ depend only on $\Phi$. In this bound, $r \in (0,1)$ satisfies $r \sim \frac{1}{s}$.
One can show that (\ref{eq:PolynomialIteration}) implies that for all $t \in \mathbb{R}$:

\begin{equation}
\label{eq:PolynomialBound}
\|u(t)\|_{H^s}\leq C(\Phi) (1+|t|)^{\frac{1}{r}}.
\end{equation}

\vspace{3mm}

In \cite{B2}, (\ref{eq:PolynomialIteration}) was obtained by using the \emph{Fourier multiplier method}. In \cite{S,S2}, the iteration bound was obtained by using multilinear estimates in $X^{s,b}$-spaces. Similar estimates were used in the work of Kenig-Ponce-Vega \cite{KPV3} in the study of well-posedness theory. The key was to use a multilinear estimate in an $X^{s,b}$-space with negative first index. Such a bound was then used as a smoothing estimate. A slightly different approach, based on the analysis in the work of Burq-G\'{e}rard-Tzvetkov \cite{BGT}, is used to obtain (\ref{eq:PolynomialIteration}) in the context of compact Riemannian manifolds in the work of Catoire-Wang \cite{CatW}, and Zhong \cite{Z}.

\vspace{3mm}

An alternative iteration bound, based on the use of the \emph{upside-down I-method}, which was used in our previous work \cite{SoSt1,SoSt2}, gave better polynomial bounds for solutions of nonlinear Schr\"{o}dinger equations on $S^1$ and $\mathbb{R}$. The main idea was to consider the operator $\mathcal{D}$, related to $D^s$ such that $\|\mathcal{D}u\|_{L^2}$ was \emph{slowly varying}. This is the technique which we will apply in the present paper as well.

\vspace{2mm}

In the case of the linear Schr\"{o}dinger equation with potential on $\mathbb{T}^d$, better results are known. In \cite{B5}, Bourgain studies the equation:

\begin{equation}
\label{eq:potentialequation}
i u_t +\Delta u= V u.
\end{equation}

\vspace{2mm}

The potential $V$ is taken to be jointly smooth in $x$ and $t$ with uniformly
bounded partial derivatives with respect to both of the variables. It is shown that solutions to (\ref{eq:potentialequation}) satisfy for all $\epsilon>0$ and all $t \in \mathbb{R}$:

\begin{equation}
\label{eq:tepsilon}
\|u(t)\|_{H^s}\lesssim_{s,\Phi,\epsilon} (1+|t|)^{\epsilon}.
\end{equation}

\vspace{2mm}

The proof of $(\ref{eq:tepsilon})$ is based on separation properties of the eigenvalues of the Laplace operator on $\mathbb{T}^d$.

\vspace{2mm}

Recently, a new proof of $(\ref{eq:tepsilon})$ was given in the work of Delort \cite{De}. The argument given in this paper is based on an iterative change of variable. In addition to recovering the result $(\ref{eq:tepsilon})$ on any $d$-dimensional torus, the same bound is proved for the linear Schr\"{o}dinger equation on any Zoll manifold, i.e. on any compact manifold whose geodesic flow is periodic. So far, it is an open problem to adapt any of these techniques to obtain bounds like $(\ref{eq:tepsilon})$ for nonlinear equations.

\vspace{2mm}

If we knew that $(\ref{eq:Hartree})$ scattered in $H^s$, we would immediately obtain uniform bounds on $\|u(t)\|_{H^s}$. However, in the periodic setting, no scattering results have ever been proved, and one doesn't expect them to hold due to limited dispersion. In the non-periodic setting, there are several known scattering results due to Ginibre-Ozawa \cite{GiOz}, Ginibre-Velo \cite{GV1,GV2,GV3}, and Hayashi-Naumkin-Ozawa\cite{HNO}, and more recent results due to Miao-Wu-Xu \cite{MWX} and Miao-Xu-Zhao \cite{MXZ1,MXZ2,MXZ3,MXZ4,MXZ5}, but none of them are strong enough to imply scattering in $H^s$ for $(\ref{eq:Hartree})$ on $\mathbb{R}^2$. For a detailed explanation, we refer the reader to Remark \ref{Remark 4.2}.

\vspace{2mm}

Let us mention that after the submission of our paper, Dodson \cite{Do} proved that the two-dimensional non-periodic cubic NLS scatters in $L^2$. This is a continuation of his work in three and higher dimensions \cite{Do2} with subsequent extensions to the one-dimensional case \cite{Do3}. A persistence of regularity result shows that Dodson's result indeed implies scattering in $H^s$ of the defocusing cubic NLS, which formally corresponds to taking $V=\delta$ in $(\ref{eq:Hartree})$. We give a detailed proof of persistence of regularity for scattering in Appendix B. To the best of our knowledge, there are no known scattering results for the full range of potentials $V$ taken in $(\ref{eq:Hartree})$. In the periodic setting, scattering is not expected. In fact, in the work of Colliander-Keel-Staffilani-Takaoka-Tao \cite{CKSTT6}, it was proven that the defocusing cubic NLS can't scatter in any $H^s$.

\vspace{2mm}

We finally mention that the problem of Sobolev norm growth was also recently studied in \cite{CKSTT6}, but in the sense of bounding the growth from below. In this paper, the authors exhibit the existence of smooth solutions of the cubic defocusing nonlinear Schr\"{o}dinger equation on $\mathbb{T}^2$, whose $H^s$ norm is arbitrarily small at time zero, and is arbitrarily large at some large finite time. 
An extension of this result to instability of plane waves has also been noted \cite{H}.

\vspace{2mm}

We remark that the behavior at infinity is still an open problem. However, it is good to note that the equation $(\ref{eq:CubicNLS})$ on $\mathbb{T}^2$ has non-trivial solutions which have all Sobolev norms uniformly bounded in time. Similarly as on $S^1$ \cite{SoSt1}, given $\alpha \in \mathbb{C}$ and $n \in \mathbb{Z}^2$, the function:
$$u(x,t):= \alpha e^{-i |\alpha|^2 t}e^{i(\langle n, x \rangle - |n|^2 t)}$$
is a solution to $(\ref{eq:CubicNLS})$ on $\mathbb{T}^2$ with initial data $\Phi= \alpha e^{i\langle n, x \rangle}$. A similar construction was used in \cite{BGT1} to prove instability properties in Sobolev spaces of negative index. A similar argument shows that there exist solutions to $(\ref{eq:Hartree})$ with the same property.

\subsection{Techniques of the proof.}

As was mentioned in the previous section, the main idea is to define $\mathcal{D}$ to be an \emph{upside-down I-operator}. This operator is defined as a Fourier multiplier operator. By construction, we will be able to relate $\|u(t)\|_{H^s}$ to $\|\mathcal{D}u(t)\|_{L^2}$, so we consider the growth of the latter quantity. Following the ideas of the construction of the standard \emph{I-operator}, as defined by Colliander, Keel, Staffilani, Takaoka, and Tao  \cite{CKSTT,CKSTT2,CKSTT3}, our goal is to show that the quantity $\|\mathcal{D}u(t)\|_{L^2}^2$ is \emph{slowly varying}. This is done by applying a Littlewood-Paley decomposition and summing an appropriate geometric series. Let us remark that a similar technique was applied in the low-regularity context in \cite{CKSTT2}.

\vspace{2mm}

As in our previous work \cite{SoSt1,SoSt2}, we will use \emph{higher modified energies}, i.e. quantities obtained from
$\|\mathcal{D}u(t)\|_{L^2}^2$ by adding an appropriate multilinear correction. In this way, we will obtain $E^2(u(t)) \sim \|\mathcal{D}u(t)\|_{L^2}^2$, which is even more slowly varying. Due to more a more complicated resonance phenomenon in two dimensions, the construction of $E^2$ is going to be more involved than it was in one dimension. In the periodic setting, $E^2$ is constructed in Subsection \ref{HigherEnergyTorus}. In the non-periodic setting, $E^2$ is constructed in Subsection \ref{HigherEnergyPlane}.

\vspace{2mm}

We prove Theorem \ref{Theorem 1} and Theorem \ref{Theorem 2} for initial data $\Phi$, which we assume lies only in $H^s(\mathbb{T}^2)$ and $H^s(\mathbb{R}^2)$, respectively. We don't assume any further regularity on the initial data. However, in the course of the proof, we work with $\Phi$ which is smooth, in order to make our formal calculations rigorous. The fact that we can do this follows from an appropriate Approximation Lemma (Proposition \ref{Proposition 3.2} and Proposition \ref{Proposition 4.5}).

\vspace{3mm}

\textbf{Organization of the paper:}

\vspace{2mm}

In Section 2, we give some notation, and we recall some facts from Harmonic Analysis. In Section 3, we prove Theorem \ref{Theorem 1}. Section 4 is devoted to the proof of Theorem \ref{Theorem 2}. In Appendix A, we prove local-in-time bounds for $(\ref{eq:Hartree})$ on the torus. The techniques mentioned in Appendix A apply to prove analogous bounds for $(\ref{eq:Hartree})$ on the plane.

\vspace{3mm}

\textbf{Acknowledgements:}

\vspace{2mm}

The author would like to thank his Advisor, Gigliola Staffilani for suggesting this problem, and for her help and encouragement. He would also like to thank Hans Christianson and Antti Knowles for several useful comments and discussions. The author is grateful to the referee for their comments and suggestions.

\vspace{3mm}

\section{Notation and known facts.}

In our paper, we denote by $A \lesssim B$ an estimate of the form $A \leq CB$, for some $C>0$. If $C$ depends on a parameter $p$, we write $A \lesssim_p B$. We also write the latter condition as $C=C(p)$.

We are taking the convention for the Fourier transform on $\mathbb{T}^2$ to be:

$$\widehat{f}(n):=\int_{\mathbb{T}^2} f(x) e^{- i \langle x, n \rangle} dx.$$
\vspace{2mm}

On $\mathbb{R}^2$, we define the Fourier transform by:

$$\widehat{f}(\xi):=\int_{\mathbb{R}^2} f(x) e^{- i \langle x, \xi \rangle} dx.$$
\vspace{2mm}
Here $n \in \mathbb{Z}^2$ and $\xi \in \mathbb{R}^2$.

\vspace{2mm}

On $\mathbb{T}^2 \times \mathbb{R}$, we define the spacetime Fourier transform by:
$$\widetilde{u}(n,\tau):=\int_{\mathbb{T}^2} \int_{\mathbb{R}} u(x,t) e^{-i \langle x, n \rangle - i t \tau} dt dx.$$

\vspace{2mm}

On $\mathbb{R}^2 \times \mathbb{R}$, we define it by:

$$\widetilde{u}(\xi,\tau):=\int_{\mathbb{R}^2} \int_{\mathbb{R}} u(x,t) e^{-i \langle x, \xi \rangle - i t \tau} dt dx.$$
\vspace{2mm}
Let us take the following convention for the Japanese bracket $\langle \cdot \rangle$ :
$$\langle x \rangle: =\sqrt{1+|x|^2}.$$
Let us recall that we are working in Sobolev Spaces $H^s(\mathbb{T}^2)$ on the the torus, and $H^s(\mathbb{R}^2)$ on the plane, whose norms are defined for $s \in \mathbb{R}$ by:

$$\|f\|_{H^s(\mathbb{T}^2)}:=\big(\sum_{n \in \mathbb{Z}^2}|\widehat{f}(n)|^2 \langle n \rangle^{2s}\big)^{\frac{1}{2}}. $$
\vspace{1mm}
and
\vspace{1mm}
$$\|g\|_{H^s(\mathbb{R}^2)}:=\big(\int_{\mathbb{R}^2}|\widehat{f}(\xi)|^2 \langle \xi \rangle^{2s} d \xi \big)^{\frac{1}{2}}. $$

\vspace{2mm}

Let us define:

$$H^{\infty}(\mathbb{T}^2):=\bigcap_{s>0}H^s(\mathbb{T}^2).$$
\vspace{1mm}
and
$$H^{\infty}(\mathbb{R}^2):=\bigcap_{s>0}H^s(\mathbb{R}^2).$$

\vspace{2mm}

An important tool in our work will also be $X^{s,b}$ spaces. We recall that these spaces come from
the norm defined for $s,b \in \mathbb{R}$:

$$\|u\|_{X^{s,b}(\mathbb{T}^2 \times \mathbb{R})}:=\big(\sum_{n \in \mathbb{Z}^2} \int_{\mathbb{R}} |\widetilde{u}(n,\tau)|^2 \langle n \rangle^{2s} \langle \tau + |n|^2 \rangle^{2b} d \tau \big)^{\frac{1}{2}}.$$
\vspace{1mm}
and
$$\|u\|_{X^{s,b}(\mathbb{R}^2 \times \mathbb{R})}:=\big(\int_{\mathbb{R}^2} \int_{\mathbb{R}} |\widetilde{u}(\xi,\tau)|^2 \langle \xi \rangle^{2s} \langle \tau + |\xi|^2 \rangle^{2b} d \tau d \xi \big)^{\frac{1}{2}}.$$
\vspace{2mm}
When there is no confusion, we write these spaces just as $H^s$ and $X^{s,b}$.

\vspace{2mm}

In our proofs, we will frequently have to use Littlewood-Paley decompositions. Given a function $u \in L^2(\mathbb{T}^2)$ and a dyadic integer $N$, we define by $u_N$ the function obtained from $u$ by restricting its Fourier transform to the dyadic annulus $|n|\sim N$.
Hence, we have: $$u = \sum_{N} u_N.$$

\vspace{2mm}

We analogously define $v_N$ for $v \in L^2(\mathbb{R}^2)$.

\vspace{2mm}

Having defined the spaces in which we will be working, let us recall some estimates which we will use in our analysis.

\subsection{Estimates on $\mathbb{T}^2$}

By Sobolev embedding on $\mathbb{T}^2$, we know that, for all $2\leq q <\infty$, one has:

\begin{equation}
\label{eq:Sobolevembeddingtorus}
\|u\|_{L^q} \lesssim \|u\|_{H^1}.
\end{equation}

From \cite{G}, we know that on $\mathbb{T}^2$:

\begin{equation}
\label{eq:L4torus}
\|u\|_{L^4_{t,x}} \lesssim \|u\|_{X^{0+,\frac{1}{2}+}}.
\end{equation}
\vspace{2mm}
(A similar local-in-time estimate was earlier noted in \cite{B}.)

\vspace{2mm}

By definition, one has:

\begin{equation}
\label{eq:L2definitionXsb}
\|u\|_{L^2_{t,x}}= \|u\|_{X^{0,0}}.
\end{equation}

\vspace{2mm}

From Sobolev embedding, it follows that:

\begin{equation}
\label{eq:SobolevembeddingXsb}
\|u\|_{L^{\infty}_{t,x}} \lesssim \|u\|_{X^{1+,\frac{1}{2}+}}.
\end{equation}

\vspace{2mm}

If we take the $\frac{1}{2}+$ in $(\ref{eq:L4torus})$ to be very close to $\frac{1}{2}$, we can interpolate between $(\ref{eq:L4torus})$ and $(\ref{eq:L2definitionXsb})$ to deduce:

\begin{equation}
\label{eq:L4-}
\|u\|_{L^{4-}_{t,x}} \lesssim \|u\|_{X^{0+,\frac{1}{2}-}}.
\end{equation}

\vspace{2mm}

Similarly, we can interpolate between $(\ref{eq:L4torus})$ and $(\ref{eq:SobolevembeddingXsb})$ to obtain:

\begin{equation}
\label{eq:L4+}
\|u\|_{L^{4+}_{t,x}} \lesssim \|u\|_{X^{0+,\frac{1}{2}+}}.
\end{equation}

\vspace{2mm}

Let $c<d$ be real numbers, and let us denote by $\chi=\chi(t)=\chi_{[c,d]}(t)$. One then has, for all $s \in \mathbb{R}$, and for all $b<\frac{1}{2}$:

\begin{equation}
\label{eq:timelocalization}
\| \chi u \|_{X^{s,b}} \lesssim  \|u\|_{X^{s,b+}}.
\end{equation}
The proof of $(\ref{eq:timelocalization})$ is the same as the proof of Lemma 2.1. in \cite{SoSt1} (see also \cite{CafE,CKSTT4}). From the proof, we note that the implied constant is independent of $c$ and $d$. We omit the details.

\vspace{2mm}

We can interpolate between $(\ref{eq:L2definitionXsb})$ and $(\ref{eq:SobolevembeddingXsb})$ to deduce that, for $M \gg 2$, one has:

\begin{equation}
\label{eq:LMtx}
\|u\|_{L^M_{t,x}} \lesssim \|u\|_{X^{1,\frac{1}{2}+}}.
\end{equation}
Furthermore, from Sobolev embedding in time, we know that:

\begin{equation}
\label{eq:SobolevembeddingXsb2}
\|u\|_{L^{\infty}_tL^2_x} \lesssim \|u\|_{X^{0,\frac{1}{2}+}}.
\end{equation}
We can interpolate between $(\ref{eq:L2definitionXsb})$ and $(\ref{eq:SobolevembeddingXsb2})$ to obtain:

\begin{equation}
\label{eq:L4tL2x}
\|u\|_{L^4_tL^2_x} \lesssim \|u\|_{X^{0,\frac{1}{4}+}}.
\end{equation}
An additional estimate we will use is:

\begin{equation}
\label{eq:secondL4estimate}
\|u\|_{L^4_{t,x}} \lesssim \|u\|_{X^{\frac{1}{2}+,\frac{1}{4}+}}.
\end{equation}
The estimate $(\ref{eq:secondL4estimate})$ is a consequence of the following:

\begin{lemma}
\label{Lemma 2.1}
Suppose that $Q$ is a ball in $\mathbb{Z}^2$ of radius $N$, and center $n_0$. Suppose that $u$ satisfies $supp\, \widehat{u} \subseteq Q$.
Then, one has:
\begin{equation}
\label{eq:locball}
\|u\|_{L^4_{t,x}} \lesssim N^{\frac{1}{2}}\|u\|_{X^{0,\frac{1}{4}+}}.
\end{equation}
\end{lemma}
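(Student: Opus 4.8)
The plan is to prove the $L^4_{t,x}$ estimate for functions with Fourier support in a ball $Q \subseteq \mathbb{Z}^2$ of radius $N$ by reducing it, via the standard $TT^*$/Plancherel argument, to counting lattice points on (thickened) paraboloids, with the $N^{1/2}$ loss coming from the radius of the ball. First I would reduce to estimating $\|u\|_{L^4_{t,x}}^2 = \|u^2\|_{L^2_{t,x}}$, i.e. to a bilinear estimate: writing $u = \sum_{n \in Q} \int \widetilde{u}(n,\tau) e^{i(\langle x,n\rangle + t\tau)} \, d\tau$, the square $u^2$ has spacetime Fourier transform supported where the spatial frequency $m = n_1 + n_2$ lies in a ball of radius $2N$ and the temporal frequency is $\tau_1 + \tau_2$. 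By Plancherel in $(x,t)$ it suffices to bound
\begin{equation*}
\Big\| \sum_{n_1 + n_2 = m} \int_{\tau_1 + \tau_2 = \sigma} \widetilde{u}(n_1,\tau_1)\widetilde{u}(n_2,\tau_2) \, d\tau_1 \Big\|_{\ell^2_m L^2_\sigma}.
\end{equation*}

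Next I would localize the modulations dyadically: set $\widetilde{u}_j$ to be the piece of $\widetilde{u}$ with $\langle \tau - |n|^2 \rangle \sim 2^j$, so that $\|u\|_{X^{0,1/4+}}^2 \sim \sum_j 2^{(1/2+)j}\|\widetilde{u}_j\|_{L^2}^2$. On each dyadic block one applies Cauchy–Schwarz in the convolution variables: for fixed $(m,\sigma)$, the inner double integral/sum is bounded by $\big(\sum_{n_1+n_2=m}\int_{\tau_1+\tau_2=\sigma} |\widetilde{u}(n_1,\tau_1)|^2|\widetilde{u}(n_2,\tau_2)|^2\big)^{1/2}$ times the square root of the measure of the set of $(n_1,\tau_1)$ with $n_1 \in Q$, $m - n_1 \in Q$, $\langle \tau_1 - |n_1|^2\rangle \sim 2^{j_1}$, $\langle \sigma - \tau_1 - |m-n_1|^2 \rangle \sim 2^{j_2}$. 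The key geometric point is that once $\tau_1$ is confined to an interval of length $\sim \min(2^{j_1},2^{j_2})$, the remaining constraint forces $n_1$ to lie in the intersection of the ball $Q$ (radius $N$) with an annulus $\{ |\,|n_1|^2 + |m - n_1|^2 - \text{const}\,| \lesssim \max(2^{j_1},2^{j_2}) \}$; since $|n_1|^2 + |m-n_1|^2 = 2|n_1 - m/2|^2 + |m|^2/2$, this annulus is a genuine (thickened) circle centered at $m/2$, and its intersection with a ball of radius $N$ contains $O(N \cdot (1 + \max(2^{j_1},2^{j_2})/N)) = O(N + \max(2^{j_1},2^{j_2}))$ lattice points — the factor $N$ is exactly where the $N^{1/2}$ in \eqref{eq:locball} is produced.

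Assembling: the counting bound gives, for the block with modulations $2^{j_1}, 2^{j_2}$, a gain of $\big(\min(2^{j_1},2^{j_2}) \cdot (N + \max(2^{j_1},2^{j_2}))\big)^{1/2} \lesssim N^{1/2}\min(2^{j_1},2^{j_2})^{1/2} + \min(2^{j_1},2^{j_2})^{1/2}\max(2^{j_1},2^{j_2})^{1/2}$, and after applying Cauchy–Schwarz in $(m,\sigma)$ to absorb the squared factors into $\|\widetilde{u}_{j_1}\|_{L^2}\|\widetilde{u}_{j_2}\|_{L^2}$, one sums the geometric series in $j_1, j_2$ against the weights $2^{-(1/2+)j_1}2^{-(1/2+)j_2}$ coming from the $X^{0,1/4+}$ norms. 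The $N^{1/2}2^{\min/2}$ term sums (since $\tfrac12 < \tfrac12+$) to give $N^{1/2}\|u\|_{X^{0,1/4+}}^2$, and the $2^{\min/2}2^{\max/2}$ term, being symmetric, also sums against $2^{-(1/2+)(j_1+j_2)}$. I expect the main obstacle to be making the lattice-point count on the thickened circle uniform — one must check that a circle of any radius in $\mathbb{Z}^2$ intersected with a disk of radius $N$ and thickened by width $w$ has $O(N + w)$ points, which follows from the elementary observation that on any unit-length arc of a circle of radius $R \geq 1$ the thickened strip of width $w$ contains $O(1+w)$ lattice points and there are $O(N)$ relevant arcs when we only look inside a ball of radius $N$ (and $O(1)$ arcs worth of extra contribution when $R \lesssim N$); some care is needed in the regime $R \gg N$ where the arc is nearly flat, but there the strip is contained in an $N \times w$ rectangle and the count is trivially $O(N + Nw)$, which is still acceptable after one notes $w \lesssim \max(2^{j_1},2^{j_2})$ and $\min(2^{j_1},2^{j_2}) \cdot Nw$ is dominated by the second term above. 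Finally, \eqref{eq:secondL4estimate} follows from \eqref{eq:locball} by a Littlewood–Paley decomposition in the spatial frequency, summing $N^{1/2}\|u_N\|_{X^{0,1/4+}} \lesssim \sum_N N^{1/2}\|u_N\|_{X^{0,1/4+}} \lesssim \|u\|_{X^{1/2+,1/4+}}$ via Cauchy–Schwarz in $N$ (using the $0+$ room), together with the near-orthogonality of the pieces $u_N$ in $L^4_{t,x}$ up to the usual $\ell^1$-to-$\ell^2$ passage absorbed by the extra derivative.
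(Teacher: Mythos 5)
There is a genuine gap, and it stems from a concrete exponent error in the modulation weights. When you write the $X^{0,1/4+}$ norm on the dyadic pieces, you state $\|u\|_{X^{0,1/4+}}^2 \sim \sum_j 2^{(1/2+)j}\|\widetilde{u}_j\|_{L^2}^2$, which is correct; but in the bilinear sum the norms $\|\widetilde{u}_{j_1}\|_{L^2}\|\widetilde{u}_{j_2}\|_{L^2}$ appear to the \emph{first} power, so each should be weighted by $2^{-(1/4+)j}$, not by $2^{-(1/2+)j}$ as you claim at the end. With the correct weight, the term coming from the $\max(2^{j_1},2^{j_2})$ part of your lattice count is
\[
\bigl(\min(2^{j_1},2^{j_2})\cdot\max(2^{j_1},2^{j_2})\bigr)^{1/2}\,2^{-(1/4+)(j_1+j_2)} = 2^{\bigl(\frac14-\bigr)(j_1+j_2)},
\]
which \emph{grows} in $j_1+j_2$ and does not sum. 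Tightening the count to $\min(2^{j_1},2^{j_2})\cdot\min\bigl(N+\max(2^{j_1},2^{j_2}),N^2\bigr)$ (the ball constraint caps the $n_1$-count at $N^2$) does not save the argument: in the intermediate range $N\lesssim\max(2^{j_1},2^{j_2})\lesssim N^2$ the contribution is $\sim N^{1-}$, not $N^{1/2}$. This loss in the high-modulation regime is intrinsic to the bilinear/$TT^*$ method at $b=\tfrac14+$; your scheme would close if the modulation weight were $b=\tfrac12+$ (where $2^{(j_1+j_2)/2}\cdot2^{-(1/2+)(j_1+j_2)}$ is summable — this is how the sharper $L^4$ bound in $X^{0+,\frac12+}$ is proved), but at $b=\tfrac14+$ the geometric information from the circle is useless once the modulations exceed $N$.

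The paper's route (following Bourgain, referenced as \cite{B3}) is entirely linear and does not look at the resonance surface at all: apply Hausdorff--Young in both $t$ and $x$ to get $\|u\|_{L^4_{t,x}}\le\|\widetilde{u}\|_{\ell^{4/3}_nL^{4/3}_\tau}$, then H\"older in $\tau$ against $\langle\tau-|n|^2\rangle^{-1/4-}\in L^4_\tau$ to upgrade $L^{4/3}_\tau$ to the weighted $L^2_\tau$, then H\"older in $n$ over the support $Q$ (of cardinality $\sim N^2$) to pass from $\ell^{4/3}_n$ to $\ell^2_n$ at the cost of $|Q|^{1/4}\sim N^{1/2}$. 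This is where the $N^{1/2}$ comes from — it is a pure pigeonhole/support-size loss, not a lattice-point count. I would suggest abandoning the bilinear decomposition for this particular lemma and proving it by this direct route; your lattice-counting machinery is the right tool for the refined bilinear improved Strichartz estimates elsewhere in the paper (Propositions \ref{Proposition 2.3} and \ref{AngleImprovedStrichartz}), but not here.
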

Lemma \ref{Lemma 2.1} is proved in \cite{B3} by using the Hausdorff-Young inequality and H\"{o}lder's inequality. We omit the details.

\vspace{2mm}

To deduce $(\ref{eq:secondL4estimate})$, we write $u = \sum_{N} u_N$. By the triangle inequality and Lemma \ref{Lemma 2.1}, we obtain:

$$\|u\|_{L^4_{t,x}} \leq \sum_{N} \|u_N\|_{L^4_{t,x}} \lesssim \sum_{N} N^{\frac{1}{2}} \|u_N\|_{X^{0,\frac{1}{4}+}}.$$

$$\lesssim \sum_{N} \frac{1}{N^{0+}} \|u_N\|_{X^{\frac{1}{2}+,\frac{1}{4}+}} \lesssim \|u\|_{X^{\frac{1}{2}+,\frac{1}{4}+}}.$$
We can now interpolate between $(\ref{eq:L4torus})$ and $(\ref{eq:secondL4estimate})$ to deduce:

\begin{equation}
\label{eq:thirdL4estimate}
\|u\|_{L^4_{t,x}} \lesssim \|u\|_{X^{s_1,b_1}},
\end{equation}
whenever $\frac{1}{4}<b_1<\frac{1}{2}+, s_1>1-2b_1.$

\vspace{2mm}

By using an appropriate transformation, as in Lemma 2.4 in \cite{G}, we see that $(\ref{eq:thirdL4estimate})$ implies:

\begin{lemma}
\label{Lemma 2.2}
Suppose that $u$ is as in the assumptions of Lemma \ref{Lemma 2.1}, and suppose that $b_1,s_1 \in \mathbb{R}$ satisfy
$\frac{1}{4}<b_1<\frac{1}{2}+, s_1>1-2b_1.$ Then, one has:
\begin{equation}
\label{eq:4thL4estimate}
\|u\|_{L^4_{t,x}} \lesssim N^{s_1} \|u\|_{X^{0,b_1}}.
\end{equation}
\end{lemma}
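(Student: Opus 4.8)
The plan is to deduce the estimate from the unlocalised bound $(\ref{eq:thirdL4estimate})$ by a Galilean change of variables, which is the ``change of summation'' referred to above: it moves the frequency support of $u$ so as to be centred at the origin, while leaving both the $L^4_{t,x}$ norm and the $X^{0,b_1}$ norm unchanged. Once this is done, on the new function the weight $\langle n\rangle^{s_1}$ appearing in the $X^{s_1,b_1}$ norm is bounded by $N^{s_1}$, and $(\ref{eq:thirdL4estimate})$ yields the claim. I would first reduce to the case $N\geq 1$ (a ball of radius $<1$ contains at most one frequency, and then the estimate is the elementary one-dimensional Sobolev embedding $\|f\|_{L^4_t}\lesssim\|f\|_{H^{b_1}_t}$, valid since $b_1>\tfrac14$), and to $s_1\geq 0$, which is the case of interest.

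Write $u(x,t)=\sum_{n\in Q}a_n(t)e^{i\langle n,x\rangle}$ and set
\[
v(x,t):=e^{-i\langle n_0,x\rangle}\,e^{i|n_0|^2 t}\,u(x-2tn_0,t),
\]
a well-defined function on $\mathbb{T}^2\times\mathbb{R}$ because $n_0\in\mathbb{Z}^2$. Three facts have to be checked. First, $|v(x,t)|=|u(x-2tn_0,t)|$, so translation invariance of Lebesgue measure gives $\|v\|_{L^4_{t,x}}=\|u\|_{L^4_{t,x}}$. Second, expanding the definition, the $m$-th spatial Fourier coefficient of $v$ is $a_{m+n_0}(t)\,e^{i(|n_0|^2-2\langle m+n_0,n_0\rangle)t}$, so $\widehat v(\cdot,t)$ is supported in $Q-n_0$, the ball of radius $N$ centred at the origin; in particular $\langle m\rangle\lesssim N$ there. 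Third, passing to the spacetime Fourier transform one finds $\widetilde v(m,\tau)=\widetilde u\big(m+n_0,\ \tau-|n_0|^2+2\langle m+n_0,n_0\rangle\big)$, and under this identification the dispersive weight transforms cleanly: with $\sigma:=\tau-|n_0|^2+2\langle m+n_0,n_0\rangle$ one computes $\langle\tau-|m|^2\rangle=\langle\sigma-|m+n_0|^2\rangle$. Changing variables $\tau\mapsto\sigma$ (Jacobian $1$) and reindexing the sum by $n=m+n_0$ then shows $\|v\|_{X^{0,b}}=\|u\|_{X^{0,b}}$ for every $b$.

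Granting these, the conclusion follows in one line: applying $(\ref{eq:thirdL4estimate})$ to $v$ (legitimate since $\tfrac14<b_1<\tfrac12+$ and $s_1>1-2b_1$), then using $\langle m\rangle\lesssim N$ on $\operatorname{supp}\widehat v$ together with $s_1\geq 0$, and then the two invariances,
\[
\|u\|_{L^4_{t,x}}=\|v\|_{L^4_{t,x}}\lesssim\|v\|_{X^{s_1,b_1}}\lesssim N^{s_1}\|v\|_{X^{0,b_1}}=N^{s_1}\|u\|_{X^{0,b_1}}.
\]
I do not expect a genuine obstacle. The only step requiring care is the third one: verifying that the Galilean boost is an exact isometry of $X^{0,b}$ up to the frequency translation, which is precisely where the algebraic structure of the symbol $\tau-|n|^2$ enters and which constitutes the analytic content of the ``change of summation.'' The degenerate cases flagged in the reduction cost a sentence apiece.
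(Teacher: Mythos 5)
Your argument is correct and is precisely the ``change of summation, as in \cite{B3}'' that the paper invokes without detail: the Galilean boost $u\mapsto v$ sends the ball $Q$ to a ball centred at the origin, preserves both $\|\cdot\|_{L^4_{t,x}}$ and $\|\cdot\|_{X^{0,b}}$ via the identity $\tau-|m|^2=\sigma-|m+n_0|^2$, and then $(\ref{eq:thirdL4estimate})$ together with $\langle m\rangle\lesssim N$ on $\operatorname{supp}\widehat v$ gives the claim. One small caveat: the parenthetical reduction to $N\geq 1$ via Sobolev embedding in time does not actually produce the factor $N^{s_1}<1$ when $N<1$ and $s_1>0$ (so the $N<1$ ``case'' is not genuinely disposed of that way), but this is immaterial since in the paper $N$ is always a dyadic integer $\geq 1$, and likewise the lemma is only used with $s_1>0$, which is where your step $\langle m\rangle^{s_1}\lesssim N^{s_1}$ actually needs $s_1\geq 0$.
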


\vspace{2mm}

\subsection{Estimates on $\mathbb{R}^2$.}

We note that all the mentioned estimates in the periodic setting carry over to the non-periodic setting. However, there are some estimates which hold only in the non-periodic setting, which express the fact that the dispersion phenomenon is stronger on $\mathbb{R}^2$ than on $\mathbb{T}^2$. Such estimates allow us to get a better bound in Theorem \ref{Theorem 2} than the one we obtained in Theorem \ref{Theorem 1}.

\vspace{2mm}

The first modification is that, on the plane, $(\ref{eq:L4torus})$ is improved to:

\begin{equation}
\label{eq:L4plane}
\|u\|_{L^4_{t,x}}\lesssim \|u\|_{X^{0,\frac{1}{2}+}}.
\end{equation}
Consequently, one can improve $(\ref{eq:L4-})$ to:

\begin{equation}
\label{eq:L4-plane}
\|u\|_{L^{4-}_{t,x}} \lesssim \|u\|_{X^{0,\frac{1}{2}-}}.
\end{equation}

\vspace{2mm}

On the plane, we will use the following estimate:
\begin{equation}
\label{eq:L2+}
\|u\|_{L^{2+}_{t,x}} \lesssim \|u\|_{X^{0+,0+}}.
\end{equation}
$(\ref{eq:L2+})$ follows from $(\ref{eq:L4plane})$, the fact that $\|u\|_{L^2_{t,x}}=\|u\|_{X^{0,0}}$, and interpolation.

Furthermore, a key fact is the following result, which was first noted by Bourgain in \cite{B7}:

\begin{proposition}(Improved Strichartz Estimate)
\label{Proposition 2.3}
Suppose that $N_1,N_2$ are dyadic integers such that $N_1 \gg N_2$, and suppose that $u,v \in X^{0,\frac{1}{2}+}(\mathbb{R}^2 \times \mathbb{R})$
satisfy, for all $t$: $supp\, \widehat{u}(t) \subseteq \{|\xi|\sim N_1\}$, and $supp\, \widehat{v}(t) \subseteq \{|\xi|\sim N_2\}$.
Then, one has:

\begin{equation}
\label{eq:ImprovedStrichartz}
\|uv\|_{L^2_{t,x}}\lesssim \frac{N_2^{\frac{1}{2}}}{N_1^{\frac{1}{2}}}\|u\|_{X^{0,\frac{1}{2}+}}\|v\|_{X^{0,\frac{1}{2}+}}.
\end{equation}
\end{proposition}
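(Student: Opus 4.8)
The plan is to reduce the bilinear estimate to a Fourier-space convolution computation, exploiting the fact that the high-frequency factor $u$ and the low-frequency factor $v$ force the output frequency to live in the annulus $|\xi|\sim N_1$, and that transversality of the two paraboloid pieces produces the gain $(N_2/N_1)^{1/2}$. First I would write everything on the spacetime Fourier side: by Plancherel, $\|uv\|_{L^2_{t,x}}^2 = \|\widetilde{uv}\|_{L^2}^2$, and $\widetilde{uv}(\zeta,\lambda) = \int \widetilde u(\xi,\tau)\,\widetilde v(\zeta-\xi,\lambda-\tau)\,d\xi\,d\tau$. Setting $f(\xi,\tau) = \langle\tau-|\xi|^2\rangle^{1/2+}|\widetilde u(\xi,\tau)|$ and $g(\eta,\sigma) = \langle\sigma-|\eta|^2\rangle^{1/2+}|\widetilde v(\eta,\sigma)|$, so that $\|f\|_{L^2} = \|u\|_{X^{0,1/2+}}$ and $\|g\|_{L^2} = \|v\|_{X^{0,1/2+}}$, it suffices to bound
\begin{equation*}
\Bigl\| \int \frac{f(\xi,\tau)\,g(\zeta-\xi,\lambda-\tau)}{\langle\tau-|\xi|^2\rangle^{1/2+}\,\langle(\lambda-\tau)-|\zeta-\xi|^2\rangle^{1/2+}}\,d\xi\,d\tau \Bigr\|_{L^2_{\zeta,\lambda}} \lesssim \frac{N_2^{1/2}}{N_1^{1/2}}\|f\|_{L^2}\|g\|_{L^2}.
\end{equation*}

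Next I would dualize: pair the left side against a test function $h(\zeta,\lambda) \in L^2$ with $\|h\|_{L^2}=1$, obtaining a trilinear integral over $\xi,\tau,\zeta,\lambda$ (equivalently, over the three frequency-time variables with the convolution constraint). By Cauchy–Schwarz in all variables, the standard move is to put $f$, $g$, $h$ in $L^2$ and bound the remaining weighted integral in $L^\infty$; this reduces matters to showing that
\begin{equation*}
\sup_{\zeta,\lambda} \int_{|\xi|\sim N_1} \frac{d\xi\,d\tau}{\langle\tau-|\xi|^2\rangle^{1+}\,\langle(\lambda-\tau)-|\zeta-\xi|^2\rangle^{1+}} \lesssim \frac{N_2}{N_1},
\end{equation*}
where the support conditions force $|\xi|\sim N_1$ and $|\zeta-\xi|\sim N_2$. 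The $\tau$-integral is handled first: the convolution of two $\langle\cdot\rangle^{-1-}$ weights in one variable is $\lesssim \langle\lambda - |\xi|^2 - |\zeta-\xi|^2\rangle^{-1-}$, which is harmless after integration, so the real content is the $\xi$-integral over the region $\{|\xi|\sim N_1\}\cap\{|\zeta-\xi|\sim N_2\}\cap\{|\lambda - |\xi|^2 - |\zeta-\xi|^2| \lesssim 1\}$. The first two conditions already confine $\xi$ to a set of measure $\lesssim N_2^2$ (an annular sliver), but that only gives $N_2^2$, not the gain; the extra transversality constraint $|\lambda - |\xi|^2 - |\zeta-\xi|^2|\lesssim 1$ is where the improvement comes from.

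The main obstacle, and the heart of the proof, is this geometric/measure-theoretic estimate: on the region $|\xi|\sim N_1$, $|\zeta-\xi|\sim N_2$ with $N_1\gg N_2$, the gradient in $\xi$ of the phase $|\xi|^2 + |\zeta-\xi|^2 = 2|\xi|^2 - 2\langle\zeta,\xi\rangle + |\zeta|^2$ is $2(2\xi - \zeta)$, which has magnitude $\sim N_1$ since $|\xi|\sim N_1\gg|\zeta|\lesssim N_1$ — wait, one must be careful, $|\zeta|$ can also be $\sim N_1$; the correct statement is that after fixing the component of $\xi$ along $2\xi-\zeta$, the level set $\{|\lambda - |\xi|^2-|\zeta-\xi|^2|\lesssim 1\}$ has thickness $\lesssim 1/N_1$ in that direction, while the transverse directions are confined to length $\lesssim N_2$ by the condition $|\zeta - \xi|\sim N_2$. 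Hence the $\xi$-region has measure $\lesssim N_2 \cdot (1/N_1) \cdot N_2 = N_2^2/N_1$... I would instead organize this as a coarea/slicing argument: slice by the value of $|\zeta-\xi|$, use that $|\zeta-\xi|\sim N_2$ restricts one radial variable to an interval of length $\lesssim N_2$ and the angular variable freely, then the paraboloid constraint pins the last degree of freedom to an interval of length $\lesssim 1/N_1$, giving total measure $\lesssim N_2/N_1$ after also using that the relevant slab in $\tau$ has been integrated out. This yields the claimed $N_2/N_1$, whose square root is the asserted constant. I would present this geometric lemma carefully as the key step, citing the analogous computations in \cite{B7} and \cite{CKSTT7}, and then assemble the Cauchy–Schwarz argument above to conclude; the time-regularity exponents $\frac12+$ are exactly what is needed to make the $\tau$-convolution of the two resolvent weights converge.
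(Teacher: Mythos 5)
The paper does not supply its own proof of Proposition \ref{Proposition 2.3}; it simply cites Bourgain \cite{B7} (with a pointer to \cite{CKSTT} for a 1D variant), so there is no ``paper proof'' to compare against line-by-line. Your proposal is nonetheless the standard Bourgain-style argument and its skeleton is sound: Plancherel, Cauchy--Schwarz (or the equivalent dualization) to reduce to a $\sup$ over $(\zeta,\lambda)$ of a weighted integral, the $\tau$-convolution of two $\langle\cdot\rangle^{-1-}$ weights giving $\langle\lambda-|\xi|^2-|\zeta-\xi|^2\rangle^{-1-}$, and then the geometric estimate on the $\xi$-set. The gradient computation $\nabla_\xi\phi=2(2\xi-\zeta)$ with $|2\xi-\zeta|=|\xi-(\zeta-\xi)|\gtrsim N_1$ is exactly right, and the final bound $N_2/N_1$ is correct.

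The one place you stumble is the measure count in the middle paragraph. You first write ``$N_2\cdot(1/N_1)\cdot N_2=N_2^2/N_1$'', which implicitly budgets \emph{two} transverse directions of size $N_2$ plus one of thickness $1/N_1$ -- a count appropriate to $\mathbb R^3$, not $\mathbb R^2$. In two dimensions there is only one direction transverse to $\nabla\phi$, so the set $\{|\xi|\sim N_1,\ |\zeta-\xi|\sim N_2,\ |\lambda-\phi(\xi)|\lesssim 1\}$ has area $\lesssim N_2\cdot N_1^{-1}$, which is what you then assert (without squaring $N_2$) after re-organizing by slicing. It would be cleaner to commit at once to the coarea formula: with $\phi(\xi)=|\xi|^2+|\zeta-\xi|^2=2|\xi-\zeta/2|^2+|\zeta|^2/2$, the level sets $\{\phi=r\}$ are circles, $|\nabla\phi|\gtrsim N_1$ on the support, and the arc of any such circle meeting the ball $\{|\zeta-\xi|\lesssim N_2\}$ has length $\lesssim N_2$; hence
\begin{equation*}
\int_{|\xi|\sim N_1,\ |\zeta-\xi|\sim N_2}\frac{d\xi}{\langle\lambda-\phi(\xi)\rangle^{1+}}\lesssim \int_{\mathbb R}\frac{1}{\langle\lambda-r\rangle^{1+}}\cdot\frac{N_2}{N_1}\,dr\lesssim \frac{N_2}{N_1},
\end{equation*}
avoiding any hand-counting of ``free directions.'' With that fix the argument closes, and your observation that the exponents $\tfrac12+$ are precisely what makes the $\tau$-convolution integrable is correct. (One cosmetic caution: the $f,g,h\in L^2$ dualization is fine, but as phrased it suggests putting the weight in $L^\infty$ of all variables, which is false; you actually apply Cauchy--Schwarz for each fixed $(\zeta,\lambda)$ so that the weight appears only inside the $\sup_{\zeta,\lambda}$, which is what your displayed reduction correctly records.)
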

An alternative proof (in the 1D case) is given in \cite{CKSTT}.

\vspace{2mm}

Let us note the following corollary of Proposition \ref{Proposition 2.3}.

\begin{corollary}
\label{Corollary 2.4}
Let $u,v \in X^{0,\frac{1}{2}+}(\mathbb{R}^2 \times \mathbb{R})$ be as in the assumptions of Proposition \ref{Proposition 2.3}. Then one has:
\begin{equation}
\label{eq:L2+tL2x}
\|uv\|_{L^{2+}_tL^2_x} \lesssim \frac{N_2^{\frac{1}{2}}}{N_1^{\frac{1}{2}-}} \|u\|_{X^{0,\frac{1}{2}+}}
\|v\|_{X^{0,\frac{1}{2}+}}.
\end{equation}
\end{corollary}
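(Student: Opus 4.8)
The plan is to interpolate between the $L^2_{t,x}$ bound furnished by Proposition \ref{Proposition 2.3} and a cruder bound in a space with more time integrability, losing only an $N_1^{0+}$ factor in the process. Concretely, I would first record the endpoint estimate
$$\|uv\|_{L^2_{t,x}} \lesssim \frac{N_2^{\frac{1}{2}}}{N_1^{\frac{1}{2}}} \|u\|_{X^{0,\frac{1}{2}+}} \|v\|_{X^{0,\frac{1}{2}+}},$$
which is exactly $(\ref{eq:ImprovedStrichartz})$. For the other endpoint I would use a bound of the form
$$\|uv\|_{L^{2++}_t L^2_x} \lesssim \|u\|_{X^{0,\frac{1}{2}+}} \|v\|_{X^{0,\frac{1}{2}+}},$$
obtained by writing $\|uv\|_{L^2_x} \le \|u\|_{L^4_x}\|v\|_{L^4_x}$ pointwise in $t$, or more directly by combining $(\ref{eq:L2+})$-type reasoning with $(\ref{eq:L4plane})$; the precise exponent on the time integrability is immaterial as long as it is strictly bigger than $2$, since we are going to interpolate very close to the $L^2_t$ endpoint.

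Next I would interpolate these two estimates (bilinear complex interpolation, or simply Hölder in $t$ on a space of functions supported in the fixed frequency annuli, which makes the bilinear operator linear in each slot after freezing the other). Taking the interpolation parameter $\theta$ close to $1$ so that the resulting time-Lebesgue exponent is $2+$, the spatial exponent stays $L^2_x$, the $X^{0,\frac{1}{2}+}$ norms on $u$ and $v$ are preserved (both endpoints use the same such norm), and the gain factor becomes $\big(N_2^{1/2}/N_1^{1/2}\big)^{\theta} = N_2^{\theta/2} N_1^{-\theta/2}$. Since $\theta = 1-$, we have $N_2^{\theta/2} \le N_2^{1/2}$ and $N_1^{-\theta/2} = N_1^{-\frac{1}{2}+}$, which yields exactly $(\ref{eq:L2+tL2x})$.

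The only mild subtlety — and the step I would be most careful about — is justifying the interpolation: the map $(u,v)\mapsto uv$ is bilinear, not linear, so one should either invoke a bilinear interpolation theorem or, more elementarily, fix $v$ (respectively $u$) and interpolate the resulting linear operator between the two endpoint bounds, using that both endpoints have the same $X^{0,\frac12+}$ norm on the frozen argument so no analytic family in that variable is needed. One must also check that the frequency-support hypotheses $\mathrm{supp}\,\widehat{u}(t)\subseteq\{|\xi|\sim N_1\}$, $\mathrm{supp}\,\widehat{v}(t)\subseteq\{|\xi|\sim N_2\}$, which are preserved under the relevant operations, indeed hold throughout, and that the crude endpoint's implied constant does not secretly depend on $N_1, N_2$. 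None of this presents a real obstacle; it is a routine upgrade of Proposition \ref{Proposition 2.3}, and I would state it briefly and refer to interpolation for the details.
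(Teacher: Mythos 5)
Your strategy — interpolate the bilinear estimate of Proposition \ref{Proposition 2.3} against a cruder endpoint with more time integrability — is the same as the paper's; the difference is which crude endpoint you use. The paper takes as its second endpoint the $L^\infty_t L^2_x$ bound
$\|uv\|_{L^\infty_t L^2_x} \lesssim N_1^{1/2} N_2^{1/2}\|u\|_{X^{0,\frac12+}}\|v\|_{X^{0,\frac12+}}$,
obtained in one line by H\"older in $x$, Bernstein on each factor, and the embedding $X^{0,\frac12+}\hookrightarrow L^\infty_t L^2_x$ (this is $(\ref{eq:LinftytL2xproduct})$). Interpolating that against $(\ref{eq:ImprovedStrichartz})$ with weight $\theta=1-$ gives $\big(N_2^{1/2}/N_1^{1/2}\big)^{\theta}(N_1^{1/2}N_2^{1/2})^{1-\theta}=N_2^{1/2}/N_1^{1/2-}$, which is exactly $(\ref{eq:L2+tL2x})$. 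You instead take an $L^{2++}_tL^2_x$ endpoint. That also works, but the way you state it --- $\|uv\|_{L^{2++}_tL^2_x}\lesssim\|u\|_{X^{0,\frac12+}}\|v\|_{X^{0,\frac12+}}$ with no dependence on $N_1,N_2$ --- is a slight overclaim. Running the pointwise-in-$t$ H\"older $\|uv\|_{L^2_x}\le\|u\|_{L^4_x}\|v\|_{L^4_x}$ and then H\"older in $t$ requires each factor in $L^{4+}_tL^4_x$, and the pair $(4+,4)$ is not Strichartz-admissible in two dimensions; what you can get from $X^{0,\frac12+}$ without loss is $L^4_{t,x}$ or, by interpolating with $L^\infty_tL^2_x$, the pairs $L^{4+}_tL^{4-}_x$. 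Bumping $L^{4-}_x$ up to $L^4_x$ on a dyadic annulus costs a Bernstein factor $N_j^{0+}$, so the honest crude endpoint is $\|uv\|_{L^{2++}_tL^2_x}\lesssim N_1^{0+}N_2^{0+}\|u\|_{X^{0,\frac12+}}\|v\|_{X^{0,\frac12+}}$ (compare $(\ref{eq:L2+comparable})$). This does not break your argument: since the crude endpoint enters the interpolation with weight $1-\theta=0+$, the extra $N_1^{0+}N_2^{0+}$ only degrades the final exponents by an arbitrarily small amount and is absorbed into the $\frac12-$ on $N_1$, provided you choose the losses compatibly. Your remarks on bilinear interpolation (freeze one slot and interpolate the resulting linear operator) are appropriate and in fact more careful than what the paper records. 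In sum: the proof is essentially correct, the route is a minor variation on the paper's, and the paper's $L^\infty_tL^2_x$ endpoint is the cleaner choice because it is exact and avoids the admissibility bookkeeping you would need to make your crude endpoint precise.
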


\begin{proof}
We observe that:

$$\|u v\|_{L^{\infty}_t L^2_x} \leq \|u\|_{L^{\infty}_tL^4_x} \|v\|_{L^{\infty}_tL^4_x} \lesssim
N_1^{\frac{1}{2}}\|u\|_{L^{\infty}_tL^2_x} N_2^{\frac{1}{2}}\|v\|_{L^{\infty}_tL^2_x}$$
\vspace{2mm}
\begin{equation}
\label{eq:LinftytL2xproduct}
\lesssim N_1^{\frac{1}{2}}N_2^{\frac{1}{2}}\|u\|_{X^{0,\frac{1}{2}+}}\|v\|_{X^{0,\frac{1}{2}+}}.
\end{equation}

\vspace{2mm}

In order to deduce this bound, we used Bernstein's inequality, and the non-periodic analogue of $(\ref{eq:SobolevembeddingXsb2})$.

\vspace{2mm}

For completeness, we recall Bernstein's inequality \cite{Tao}. Namely, if $1\leq p \leq q \leq \infty$, and if $f \in L^p(\mathbb{R}^2)$ satisfies $supp\,\widehat{f}
\subseteq \{|\xi|\sim N\}$, then one has:

\begin{equation}
\label{eq:Bernstein}
\|f\|_{L^q_x}\lesssim N^{\frac{2}{p}-\frac{2}{q}} \|f\|_{L^p_x}.
\end{equation}
We interpolate between $(\ref{eq:ImprovedStrichartz})$ and $(\ref{eq:LinftytL2xproduct})$ and the Corollary follows.

\end{proof}

\vspace{2mm}

In our analysis, we will have to work with $\chi=\chi_{[t_0,t_0+\delta]}(t)$, the characteristic function of the time interval $[t_0,t_0+\delta]$.
It is difficult to deal with $\chi$ directly, since this function is not smooth, and since its Fourier transform doesn't have a sign. Instead, we will decompose $\chi$ as a sum of two functions which are easier to deal with. This goal will be achieved by using an appropriate approximation to the identity.
We will use the following decomposition, which is originally found in the work of Colliander-Keel-Staffilani-Takaoka-Tao \cite{CKSTT}:

\vspace{3mm}

Given $\phi \in C^{\infty}_0(\mathbb{R})$, such that: $0 \leq \phi \leq 1,\, \int_{\mathbb{R}} \,\phi(t)\, dt =1\,$, and $\lambda>0$, we recall that the \emph{rescaling} $\phi_{\lambda}$ of $\phi$ is defined by:

$$\phi_{\lambda}(t):=\frac{1}{\lambda}\,\phi(\frac{t}{\lambda}).$$
\vspace{2mm}
We observe that such a rescaling preserves the $L^1$ norm:

$$\|\phi_{\lambda}\|_{L^1_t}=\|\phi\|_{L^1_t}.$$
\vspace{3mm}
Having defined the rescaling, we write, for the scale $N>1$:

\begin{equation}
\label{eq:chi=a+b}
\chi(t)=a(t)+b(t),\,\, \mbox{for}\,\, a:=\chi * \phi_{N^{-1}}.
\end{equation}
In Lemma 8.2. of \cite{CKSTT}, the authors note the following estimate:

\begin{equation}
\label{eq:abound}
\|a(t)f\|_{X^{0,\frac{1}{2}+}}\lesssim N^{0+} \|f\|_{X^{0,\frac{1}{2}+}}.
\end{equation}
(The implied constant here is independent of $N$.)

\vspace{2mm}

On the other hand, for any $M \in (1,+\infty)$, one obtains:

$$\|b\|_{L^M_t}=\|\chi-\chi * \phi_{N^{-1}}\|_{L^M_t} \leq
\|\chi\|_{L^M_t}+\|\chi*\phi_{N^{-1}}\|_{L^M_t}$$
which is by Young's inequality:
$$\leq \|\chi\|_{L^M_t}+\|\chi\|_{L^M_t}\|\phi_{N^{-1}}\|_{L^1_t}=2\|\chi\|_{L^M_t}=C(M,\chi).$$

\vspace{2mm}

If we now define:

\begin{equation}
\label{eq:b1}
b_1(t):= \int_{\mathbb{R}}|\hat{b}(\tau)|e^{i t \tau} d \tau.
\end{equation}
Then the previous bound on $\|b\|_{L^M_t}$ and the Littlewood-Paley inequality  \cite{D} imply:

\begin{equation}
\label{eq:b1bound}
\|b_1\|_{L^M_t} \leq C(M,\chi)=C(M,\Phi).
\end{equation}

\vspace{2mm}

To explain the fact that $C(M,\chi)=C(M,\Phi)$, we note that $\chi$ is defined as the characteristic function of an interval of size $\delta$, and $\delta$, in turn, depends only on $\Phi$.

\vspace{2mm}

We will frequently use the following consequence of Proposition \ref{Proposition 2.3}

\begin{proposition}
\label{chiImprovedStrichartz}(Improved Strichartz Estimate with rough cut-off in time)
Let $u,v \in X^{0,\frac{1}{2}+}(\mathbb{R}^2 \times \mathbb{R})$ satisfy the assumptions of Proposition \ref{Proposition 2.3}. Suppose that $N_1 \gtrsim N$. Let $u_1,v_1$ be given by:
$$\widetilde{u_1}:=|(\chi u)\,\widetilde{}\,|, \widetilde{v_1}:=|\widetilde{v}\,|.$$
Then one has:
\begin{equation}
\label{eq:ImprovedStrichartzchi}
\|u_1 v_1\|_{L^2_{t,x}} \lesssim \frac{N_2^{\frac{1}{2}}}{N_1^{\frac{1}{2}-}} \|u\|_{X^{0,\frac{1}{2}+}} \|v\|_{X^{0,\frac{1}{2}+}}.
\end{equation}
The same bound holds if
$$ \widetilde{u_1}:=|\widetilde{u}\,|, \widetilde{v_1}:=|(\chi v)\,\widetilde{}\,|.$$
\end{proposition}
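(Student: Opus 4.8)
The plan is to split the rough time cutoff using the decomposition $(\ref{eq:chi=a+b})$, $\chi = a + b$ with $a = \chi * \phi_{N^{-1}}$, and to treat the two pieces by different tools: Proposition \ref{Proposition 2.3} for the (essentially smooth) factor $a$, and Corollary \ref{Corollary 2.4} together with the $L^M_t$ bound $(\ref{eq:b1bound})$ on $b_1$ for the rough factor $b$. The device that makes this legitimate despite the absolute values in the definition of $u_1,v_1$ is the elementary remark that both the $X^{0,\frac12+}$-norm and the dyadic spatial frequency support of each time slice depend only on the modulus of the space-time Fourier transform, and that for products $\|FG\|_{L^2_{t,x}} \le \|fg\|_{L^2_{t,x}}$ whenever $\widetilde f,\widetilde g \ge 0$ and $|\widetilde F|\le\widetilde f$, $|\widetilde G|\le\widetilde g$ pointwise (write $\widetilde{FG}$ as a space-time convolution and apply Plancherel).

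First I would note $(\chi u)\,\widetilde{} = \widetilde{au} + \widetilde{bu}$, so that $\widetilde{u_1} \le |\widetilde{au}| + |\widetilde{bu}|$. Letting $p,q$ be defined by $\widetilde p := |\widetilde{au}|$ and $\widetilde q := |\widetilde{bu}|$, the remark above gives $\|u_1 v_1\|_{L^2_{t,x}} \le \|p\,v_1\|_{L^2_{t,x}} + \|q\,v_1\|_{L^2_{t,x}}$. Both $p$ and $q$ still have each time slice of their Fourier transform supported in $\{|\xi|\sim N_1\}$, while $v_1$ is supported in $\{|\xi|\sim N_2\}$, and $\|p\|_{X^{0,\frac12+}} = \|au\|_{X^{0,\frac12+}}$, $\|q\|_{X^{0,\frac12+}} = \|bu\|_{X^{0,\frac12+}}$, $\|v_1\|_{X^{0,\frac12+}} = \|v\|_{X^{0,\frac12+}}$.

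For the $a$-contribution I would apply Proposition \ref{Proposition 2.3} directly to $(p,v_1)$, obtaining $\|p\,v_1\|_{L^2_{t,x}} \lesssim \frac{N_2^{1/2}}{N_1^{1/2}}\,\|au\|_{X^{0,\frac12+}}\,\|v\|_{X^{0,\frac12+}}$, and then invoke $(\ref{eq:abound})$ and the hypothesis $N \lesssim N_1$ to bound $\|au\|_{X^{0,\frac12+}} \lesssim N^{0+}\|u\|_{X^{0,\frac12+}} \lesssim N_1^{0+}\|u\|_{X^{0,\frac12+}}$, which is already of the desired form. For the $b$-contribution I would use that $b=b(t)$ is a function of time only, so that $\widetilde{bu}(\xi,\tau)$ is, up to a constant, the partial convolution $\widehat b(\cdot)*_\tau\widetilde u(\xi,\cdot)$ evaluated at $\tau$; hence $|\widetilde{bu}| \lesssim |\widehat b|*_\tau|\widetilde u|$, which is, up to a constant, $\widetilde{b_1\underline u}$, where $b_1$ is as in $(\ref{eq:b1})$ and $\underline u$ is defined by $\widetilde{\underline u} := |\widetilde u|$. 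The product remark then yields $\|q\,v_1\|_{L^2_{t,x}} \lesssim \|b_1\,\underline u\,v_1\|_{L^2_{t,x}}$, and Hölder in time with a large exponent $M$ gives $\|b_1\,\underline u\,v_1\|_{L^2_{t,x}} \le \|b_1\|_{L^M_t}\,\|\underline u\,v_1\|_{L^{2+}_tL^2_x}$; Corollary \ref{Corollary 2.4} applied to $(\underline u, v_1)$ bounds the last factor by $\frac{N_2^{1/2}}{N_1^{1/2-}}\|u\|_{X^{0,\frac12+}}\|v\|_{X^{0,\frac12+}}$, and $\|b_1\|_{L^M_t} = C(M,\Phi)$ by $(\ref{eq:b1bound})$. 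Adding the two contributions gives the stated inequality, and the second assertion (cutoff on $v$ instead of $u$) follows by the same argument with $u$ and $v$ interchanged — here too the hypothesis $N \lesssim N_1$ is what lets the $N^{0+}$ loss from $(\ref{eq:abound})$ be absorbed into $N_1^{1/2-}$.

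I expect the only delicate point to be keeping track of the frequency localizations through the passage to the auxiliary functions $p,q,\underline u$ with prescribed Fourier moduli, and checking that the $N^{0+}$ loss really can be charged to $N_1$ rather than to $N$; this is exactly where $N_1 \gtrsim N$ is used. Everything else is a routine combination of Hölder's inequality with Proposition \ref{Proposition 2.3}, Corollary \ref{Corollary 2.4}, and the bounds $(\ref{eq:abound})$, $(\ref{eq:b1bound})$.
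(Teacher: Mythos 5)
Your proof is correct and follows exactly the route the paper sketches: split $\chi = a + b$ via $(\ref{eq:chi=a+b})$, treat the smooth piece $a$ by applying Proposition \ref{Proposition 2.3} and $(\ref{eq:abound})$ (with the $N^{0+}$ loss absorbed into $N_1^{\frac{1}{2}-}$ because $N_1 \gtrsim N$), and treat the rough piece $b$ by dominating $|\widetilde{bu}|$ by $\widetilde{b_1\underline{u}}$, applying H\"older with a large time exponent, and invoking Corollary \ref{Corollary 2.4} together with $(\ref{eq:b1bound})$. The paper omits these details and points to the one-dimensional argument in \cite{SoSt2}, but the ingredients it names are precisely the ones you assemble, so there is no meaningful divergence.
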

Proposition \ref{chiImprovedStrichartz} follows from Proposition \ref{Proposition 2.3}, Corollary \ref{Corollary 2.4}, the decomposition $(\ref{eq:chi=a+b})$, and the estimates associated to this decomposition. We omit the details of the proof. An analogous statement is proved in one dimension in \cite{SoSt2}. The only difference is that on $\mathbb{R}^2$, the coefficient on the right-hand side of $(\ref{eq:ImprovedStrichartz})$ is $\frac{N_2^{\frac{1}{2}}}{N_1^{\frac{1}{2}}}$, instead of $\frac{1}{N_1^{\frac{1}{2}}}$, and hence we obtain the coefficient $\frac{N_2^{\frac{1}{2}}}{N_1^{\frac{1}{2}-}}$ on the right-hand side of $(\ref{eq:ImprovedStrichartzchi})$.

\vspace{2mm}

We also must consider estimates on the product $uv$, when $u$ and $v$ are localized in dyadic annuli as before, but when we no longer assume that $N_1 \gg N_2$.

\vspace{2mm}

By using H\"{o}lder's inequality and $(\ref{eq:L4plane})$, it follows that:

\begin{equation}
\label{eq:L2comparable}
\|uv\|_{L^2_{t,x}}\leq \|u\|_{L^4_{t,x}} \|v\|_{L^4_{t,x}} \lesssim \|u\|_{X^{0,\frac{1}{2}+}} \|v\|_{X^{0,\frac{1}{2}+}}.
\end{equation}

\vspace{2mm}

We note that $(\ref{eq:LinftytL2xproduct})$ still holds. We now interpolate between $(\ref{eq:LinftytL2xproduct})$ and $(\ref{eq:L2comparable})$ to deduce:

\begin{equation}
\label{eq:L2+comparable}
\|uv\|_{L^{2+}_tL^2_x} \lesssim N_1^{0+}N_2^{0+}\|u\|_{X^{0,\frac{1}{2}+}}\|v\|_{X^{0,\frac{1}{2}+}}.
\end{equation}

\vspace{2mm}

An additional form of a bilinear Strichartz Estimate that we will have to use will be the following bound, which was first observed by Colliander, Keel, Staffilani, Takaoka, and Tao \cite{CKSTT7}:

\begin{proposition}
\label{AngleImprovedStrichartz}(Angular Improved Strichartz Estimate)
Let $0<N_1 \leq N_2$ be dyadic integers, and suppose $\theta_0 \in (0,1)$. Suppose $v_j \in X^{0,\frac{1}{2}+}, j=1,2$ satisfy: $supp \widehat{v_j} \subseteq \{|\xi| \sim N_j\}$. Then the function $F$ defined by:

$$F(t,x):=$$
$$\int_{\mathbb{R}} \int_{\mathbb{R}} \int_{\mathbb{R}^2} \int_{\mathbb{R}^2} e^{it(\tau_1+\tau_2)+ i\langle x, \xi_1+\xi_2 \rangle} \chi_{|\cos \angle(\xi_1,\xi_2)|\leq \theta_0} \widetilde{v_1}(\xi_1,\tau_1) \widetilde{v_2}(\xi_2,\tau_2) d\xi_1 d\xi_2 d\tau_1 d\tau_2$$
obeys the bound:

\begin{equation}
\label{eq:angleStrichartz}
\|F\|_{L^2_{t,x}} \lesssim \theta_0^{\frac{1}{2}} \|v_1\|_{X^{0,\frac{1}{2}+}} \|v_2\|_{X^{0,\frac{1}{2}+}}.
\end{equation}

\end{proposition}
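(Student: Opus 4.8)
The plan is to reduce the claimed estimate to an $L^2$ almost-orthogonality argument combined with the standard Strichartz estimate \eqref{eq:L4plane} applied on angular sectors. First I would decompose the annulus $\{|\xi|\sim N_2\}$ into $O(\theta_0^{-1})$ congruent angular sectors $\{S_k\}$, each of angular width comparable to $\theta_0$; symmetrically decompose $\{|\xi|\sim N_1\}$ into the same collection of sectors. Write $v_j = \sum_k v_j^{(k)}$ for the corresponding pieces of $v_j$. The geometric point is that the constraint $|\cos\angle(\xi_1,\xi_2)|\leq\theta_0$ forces $\xi_1$ and $\xi_2$ to be nearly orthogonal, so in the double sum $F = \sum_{k,\ell}\chi_{\ldots}\,(v_1^{(k)} v_2^{(\ell)})\,\widetilde{\ }$ only pairs $(k,\ell)$ with $\ell$ in one of $O(1)$ sectors "perpendicular" to $k$ contribute. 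Hence $F = \sum_k F_k$ where $F_k$ is built from $v_1^{(k)}$ and $v_2^{(\ell(k))}$ (a bounded number of $\ell$'s per $k$), and the sum over $k$ has only $O(\theta_0^{-1})$ terms.

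Next I would establish almost-orthogonality in $L^2_{t,x}$ for the pieces $F_k$. The spatial frequency of $F_k$ lives in $\{\xi_1+\xi_2 : \xi_1\in S_k,\ \xi_2\in S_{\ell(k)}\}$. Since the $S_k$ are disjoint sectors and $N_1\leq N_2$, as $k$ ranges over the sectors these frequency-support sets overlap with only bounded multiplicity (a given output frequency $\eta$ of size $\sim N_2$ determines the sector of $\xi_2$, hence $k$, up to $O(1)$ choices). Therefore
\[
\|F\|_{L^2_{t,x}}^2 \;\lesssim\; \sum_k \|F_k\|_{L^2_{t,x}}^2.
\]
For each individual $F_k$, I would drop the cutoff $\chi_{|\cos\angle|\le\theta_0}$ (it only removes frequencies) and bound $\|F_k\|_{L^2_{t,x}}\le \|v_1^{(k)}\|_{L^4_{t,x}}\|v_2^{(\ell(k))}\|_{L^4_{t,x}}$ by Hölder, then apply \eqref{eq:L4plane} to get $\|F_k\|_{L^2_{t,x}}\lesssim \|v_1^{(k)}\|_{X^{0,\frac12+}}\|v_2^{(\ell(k))}\|_{X^{0,\frac12+}}$. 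Summing, Cauchy–Schwarz in $k$ over the $O(\theta_0^{-1})$ surviving indices, together with $\sum_k \|v_j^{(k)}\|_{X^{0,\frac12+}}^2 \lesssim \|v_j\|_{X^{0,\frac12+}}^2$ (Littlewood–Paley orthogonality of the sectors in the $X^{0,b}$ norm), yields
\[
\|F\|_{L^2_{t,x}} \;\lesssim\; \Big(\theta_0^{-1}\Big)^{0}\cdot\Big(\sum_k \|v_1^{(k)}\|_{X^{0,\frac12+}}^2\Big)^{1/2}\Big(\sup_k\|v_2^{(\ell(k))}\|_{X^{0,\frac12+}}\Big),
\]
which is not quite sharp; to recover the gain $\theta_0^{1/2}$ I would instead, on each $F_k$, use the finer bilinear estimate where one factor is localized to a sector of width $\theta_0 N_2$ in the angular direction and the transverse-vs-tangential anisotropy produces a factor $\theta_0^{1/2}$ — concretely, apply Proposition \ref{Proposition 2.3} (or its proof via the convolution of the two paraboloid-restricted measures) after rescaling the thin sector, so that each $\|F_k\|_{L^2}\lesssim \theta_0^{1/2}\|v_1^{(k)}\|_{X^{0,\frac12+}}\|v_2^{(\ell(k))}\|_{X^{0,\frac12+}}$, and then the almost-orthogonal sum closes with the claimed constant.

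The main obstacle I anticipate is precisely this last point: extracting the $\theta_0^{1/2}$ rather than an $O(\theta_0^{0})$ or $O(\theta_0^{-1/2})$ constant. Naively summing $O(\theta_0^{-1})$ pieces each of size $O(1)$ loses everything; one must genuinely exploit that the angular cutoff confines the interaction to a thin sector and that the Fourier supports of the $F_k$ are almost disjoint, so that the $\ell^2$ (not $\ell^1$) sum over $k$ is available, and simultaneously that each individual bilinear interaction on a $\theta_0$-thin tube already carries a $\theta_0^{1/2}$ smallness from the measure of the intersection of the two extended paraboloid supports. Balancing these — making sure the orthogonality is in $L^2_{t,x}$ including the time/modulation variable, and that the cutoff functions $\chi_{|\cos\angle|\le\theta_0}$ can be absorbed without destroying the $X^{0,\frac12+}$ structure — is the technical heart of the argument; everything else is bookkeeping with Littlewood–Paley and Hölder.
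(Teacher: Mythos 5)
The overall strategy (angular sector decomposition, a per-sector gain, then summation) can be made to work, but two of your steps are flawed. The almost-orthogonality $\|F\|_{L^2}^2 \lesssim \sum_k \|F_k\|_{L^2}^2$ is not true in general: when $N_1 \sim N_2 \sim N$, the direction of $\xi_1+\xi_2$ (with $\xi_1\perp\xi_2$ and $|\xi_1|,|\xi_2|\sim N$) varies by $O(1)$ as the ratio $|\xi_1|/|\xi_2|$ ranges over a dyadic interval, so the spatial Fourier supports of the $F_k$ overlap with multiplicity $\sim\theta_0^{-1}$, not $O(1)$; the output frequency alone does not ``determine the sector of $\xi_2$ up to $O(1)$.'' (The time frequency $\tau$ does not help here either, since $X^{0,\frac12+}$ carries no support restriction in $\tau$.) Fortunately orthogonality is not needed: once the per-sector bound $\|F_k\|\lesssim\theta_0^{1/2}\|v_1^{(k)}\|_{X^{0,\frac12+}}\|v_2^{(\ell(k))}\|_{X^{0,\frac12+}}$ is known, the triangle inequality $\|F\|\leq\sum_k\|F_k\|$ and Cauchy--Schwarz in $k$, together with $\sum_k\|v_j^{(k)}\|_{X^{0,\frac12+}}^2\lesssim\|v_j\|_{X^{0,\frac12+}}^2$, already give the claimed constant.

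The genuine gap is that this per-sector bound is asserted but not proved, and the route you propose for it --- ``apply Proposition~\ref{Proposition 2.3} $\ldots$ after rescaling the thin sector'' --- does not work: an anisotropic rescaling of $\xi$ does not preserve the parabolic symbol $|\xi|^2$, so one cannot transport the $X^{s,b}$ estimate this way, and Proposition~\ref{Proposition 2.3} applied directly to the sector pieces yields only the factor $(N_1/N_2)^{1/2}$, which exceeds $\theta_0^{1/2}$ whenever $\theta_0<N_1/N_2$ (in particular whenever $N_1\sim N_2$). The estimate one actually needs, at the level of free solutions, is
$$\sup_{\xi,\tau}\int_{\xi_1\in S_k,\ \xi-\xi_1\in S_{\ell(k)}}\delta\big(\tau - |\xi_1|^2 - |\xi-\xi_1|^2\big)\,d\xi_1 \ \lesssim\ \min\Big(\tfrac{N_1}{N_2},\,\theta_0\Big),$$
and it requires working out the geometry: in coordinates aligned with the two perpendicular sectors, the resonance circle meets the product region in an arc whose parameter runs over an interval of length $\lesssim\min(N_1,\theta_0 N_2)$ --- the constraint $\xi-\xi_1\in S_{\ell(k)}$ is what supplies the $\theta_0 N_2$ cut-off transverse to the arc --- while $|\nabla_{\xi_1}(|\xi_1|^2+|\xi-\xi_1|^2)|\sim N_2$. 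Your intuition about the ``thin sector transverse to the paraboloid'' is pointing in the right direction, but this is a genuinely new bilinear estimate, precisely the substance of Lemma~8.2 in \cite{CKSTT7} to which the paper defers, and not a corollary of Proposition~\ref{Proposition 2.3}. Without it your argument does not close.
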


\vspace{2mm}

For the proof of Proposition \ref{AngleImprovedStrichartz}, we refer the reader to the proof of Lemma 8.2. in \cite{CKSTT7}.

\vspace{2mm}

Let us give some useful notation for multilinear expressions, which can also be found in \cite{CKSTT,CKSTT5}.
Let us first consider the periodic setting. For $k \geq 2$, an even integer, we define the hyperplane:

$$\Gamma_k:=\{(n_1,\ldots,n_k)\in (\mathbb{Z}^2)^k: n_1+\cdots+ n_k=0\},$$
endowed with the measure $\delta(n_1+\cdots+n_k)$.

Given a function $M_k=M_k(n_1,\ldots,n_k)$ on $\Gamma_k$, i.e. a
\emph{k-multiplier}, one defines the \emph{k-linear functional} $\lambda_k(M_k;f_1,\ldots,f_k)$ by:

$$\lambda_k(M_k;f_1,\ldots,f_k):=\int_{\Gamma_k}M_k(n_1,\ldots,n_k)\prod_{j=1}^k \widehat{f_j}(n_j).$$

As in \cite{CKSTT}, we adopt the notation:

\begin{equation}
\label{eq:lambdan}
\lambda_k(M_k;f):=\lambda_k(M_k;f,\bar{f},\ldots,f,\bar{f}).
\end{equation}

We will also sometimes write $n_{ij}$ for $n_i+n_j$.

In the non-periodic setting, we analogously define:

$$\Gamma_k:=\{(\xi_1,\ldots,\xi_k)\in (\mathbb{R}^2)^k: \xi_1+\cdots+ \xi_k=0\},$$
In this case, the measure on $\Gamma_k$ is induced from Lebesgue measure $d\xi_1 \cdots d\xi_{k-1}$ on $(\mathbb{R}^2)^{k-1}$ by pushing forward under the map:

$$(\xi_1,\ldots,\xi_{k-1}) \mapsto (\xi_1,\ldots,\xi_{k-1},-\xi_1-\cdots-\xi_{k-1}).$$

\vspace{3mm}

Finally, let us recall the following Calculus fact, which is often referred to as the \emph{Double Mean Value Theorem}:

\begin{proposition}
\label{Proposition dva.pet}
Let $f \in C^2(\mathbb{R})$. Suppose that $x,\eta,\mu \in \mathbb{R}^2$ are such that: $|\eta|,|\mu| \ll |x|$.
Then, one has:
\begin{equation}
\label{eq:DoubleMVT}
|f(x+\eta+\mu)-f(x+\eta)-f(x+\mu)+f(x)|\lesssim |\eta||\mu| \|\nabla^2 f(x)\|.
\end{equation}
\end{proposition}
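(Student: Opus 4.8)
The plan is to view the left-hand side as a second-order mixed difference of $f$ and express it as an iterated integral of the Hessian of $f$ along a parallelogram, then bound the integrand crudely using the smallness hypothesis $|\eta|,|\mu|\ll|x|$. Concretely, for fixed $x,\eta,\mu$ define the single-variable auxiliary function
\begin{equation}
\label{eq:gdef}
g(s):=f(x+\eta+s\mu)-f(x+s\mu),\qquad s\in[0,1].
\end{equation}
Then $g(1)-g(0)$ is exactly the quantity inside the absolute values on the left of $(\ref{eq:DoubleMVT})$. By the fundamental theorem of calculus, $g(1)-g(0)=\int_0^1 g'(s)\,ds$, and $g'(s)=\langle\nabla f(x+\eta+s\mu),\mu\rangle-\langle\nabla f(x+s\mu),\mu\rangle$. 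Writing this last difference, for each fixed $s$, as another integral in the $\eta$-direction gives
\begin{equation}
\label{eq:doubleintegral}
f(x+\eta+\mu)-f(x+\eta)-f(x+\mu)+f(x)=\int_0^1\!\!\int_0^1 \big\langle \nabla^2 f(x+r\eta+s\mu)\,\eta,\,\mu\big\rangle\,dr\,ds.
\end{equation}

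From $(\ref{eq:doubleintegral})$ the bound is immediate: taking absolute values inside the integral and using the operator-norm estimate $|\langle \nabla^2 f(y)\eta,\mu\rangle|\le \|\nabla^2 f(y)\|\,|\eta|\,|\mu|$, we get
\begin{equation}
\label{eq:finalbound}
|f(x+\eta+\mu)-f(x+\eta)-f(x+\mu)+f(x)|\le |\eta|\,|\mu|\sup_{r,s\in[0,1]}\|\nabla^2 f(x+r\eta+s\mu)\|.
\end{equation}
Since $|\eta|,|\mu|\ll|x|$, every point $x+r\eta+s\mu$ with $r,s\in[0,1]$ lies in a fixed neighborhood of $x$ (say the ball of radius $|x|/2$ around $x$), on which $\|\nabla^2 f\|$ is comparable to $\|\nabla^2 f(x)\|$ — or, in the form the proposition is actually used, one simply absorbs the supremum over this small ball into the $\lesssim$. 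This yields $(\ref{eq:DoubleMVT})$. I should note that in the paper's applications $f$ will typically be $f(x)=|x|^2$ or a smooth multiplier depending only on $|x|$ behaving like a power, so $\nabla^2 f$ is genuinely slowly varying on the relevant scale and the passage from the supremum to $\|\nabla^2 f(x)\|$ is harmless; but strictly, to keep the statement clean one could also just state the conclusion with the supremum, as in $(\ref{eq:finalbound})$.

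The only mild subtlety — and the one place to be careful — is the transition from $(\ref{eq:finalbound})$ to the displayed form $(\ref{eq:DoubleMVT})$, i.e. replacing $\sup_{r,s}\|\nabla^2 f(x+r\eta+s\mu)\|$ by $\|\nabla^2 f(x)\|$. This is exactly where the hypothesis $|\eta|,|\mu|\ll|x|$ is used: one needs $\nabla^2 f$ to not oscillate wildly between $x$ and nearby points. For the functions $f$ that appear in the sequel this holds because $\nabla^2 f$ scales like $f''$ of a power, hence is essentially constant on dyadic scales; in a fully general $C^2$ statement one would instead phrase the conclusion with the supremum, or add the hypothesis that $\|\nabla^2 f\|$ is comparable on the ball $B(x,|x|/2)$. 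Everything else is a one-line application of the fundamental theorem of calculus done twice, so there is no real obstacle.
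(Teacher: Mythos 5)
Your proof is correct and is precisely the argument the paper has in mind: the paper gives no details, saying only that the result ``follows from the standard Mean Value Theorem,'' and your double application of the fundamental theorem of calculus to the auxiliary function $g(s)=f(x+\eta+s\mu)-f(x+s\mu)$, yielding the representation $\int_0^1\int_0^1\langle\nabla^2 f(x+r\eta+s\mu)\eta,\mu\rangle\,dr\,ds$, is exactly that. Your remark that the literal right-hand side should really be $\sup_{r,s\in[0,1]}\|\nabla^2 f(x+r\eta+s\mu)\|$ and that passing to $\|\nabla^2 f(x)\|$ uses the hypothesis $|\eta|,|\mu|\ll|x|$ together with the slow variation of $\nabla^2 f$ for the power-type multipliers actually used (see Case 3 of the proof of Lemma \ref{toruspointwisebound}) is a fair and accurate reading of how the estimate is applied.
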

Here $\|\cdot\|$ denotes a matrix norm on $2 \times 2$ matrices.
The proof of Proposition \ref{Proposition dva.pet} follows from the standard Mean Value Theorem.

\section{The Hartree equation on $\mathbb{T}^2$.}

\subsection{Definition of the $\mathcal{D}$-operator.}

As in our previous work \cite{SoSt1,SoSt2}, we want to define an \emph{upside-down I operator}.
We start by defining an appropriate multiplier:

\vspace{2mm}

Suppose $N>1$ is given. Let $\theta: \mathbb{Z}^2 \rightarrow \mathbb{R}$ be given by:

\begin{equation}
\label{eq:theta}
\theta(n) :=
\begin{cases}
  \big(\frac{|n|}
  {N}\big)^s\,,   \mbox{if }|n| \geq N\\
  \,1,\,  \mbox{if } |n| \leq N
\end{cases}
\end{equation}

Then, if $f:\mathbb{T}^2 \rightarrow \mathbb{C}$, we define $\mathcal{D}f$ by:

\begin{equation}
\label{eq:D operator}
\widehat{\mathcal{D}f}(n):=\theta(n)\hat{f}(n).
\end{equation}

We observe that:

\begin{equation}
\label{eq:bound on D}
\|\mathcal{D}f\|_{L^2}\lesssim_s \|f\|_{H^s}\lesssim_s N^s \|\mathcal{D}f\|_{L^2}.
\end{equation}

\vspace{2mm}

Our goal is to then estimate $\|\mathcal{D}u(t)\|_{L^2}$ , from which we can estimate  $\|u(t)\|_{H^s}$ by $(\ref{eq:bound on D})$.
In order to do this, we first need to have good local-in-time bounds.

\subsection{Local-in-time bounds.}

Let $u$ denote the global solution to $(\ref{eq:Hartree})$ on $\mathbb{T}^2$. One then has:

\begin{proposition}(Local-in-time bounds for the Hartree equation on $\mathbb{T}^2$)
\label{Proposition 3.1}
There exist $\delta=\delta(s,E(\Phi),M(\Phi)),C=C(s,E(\Phi),M(\Phi))>0$, which are continuous in energy and mass, such that for all $t_0 \in \mathbb{R}$, there exists a globally defined function $v:\mathbb{T}^2 \times \mathbb{R} \rightarrow \mathbb{C}$ such that:

\begin{equation}
\label{eq:properties of v1}
v|_{[t_0,t_0+\delta]}=u|_{[t_0,t_0+\delta]}.
\end{equation}

\begin{equation}
\label{eq:properties of v2}
\|v\|_{X^{1,\frac{1}{2}+}}\leq C(s,E(\Phi),M(\Phi)).
\end{equation}

\begin{equation}
\label{eq:properties of v3}
\|\mathcal{D}v\|_{X^{0,\frac{1}{2}+}}\leq C(s,E(\Phi),M(\Phi)) \|\mathcal{D}u(t_0)\|_{L^2}.
\end{equation}

\end{proposition}

\vspace{2mm}

Proposition \ref{Proposition 3.1} is similar to local-in-time bounds we had to prove in \cite{SoSt1,SoSt2}. Since we are working in two dimensions, the proof is going to be a little different. Our proof of Proposition \ref{Proposition 3.1} is similar to the proof of Theorem 2.7. in Chapter V of \cite{B3}. For completeness, we present it in Appendix A.

\vspace{2mm}

As in \cite{SoSt1}, Proposition \ref{Proposition 3.1} implies the following:

\begin{proposition}(Approximation Lemma for the Hartree equation on $\mathbb{T}^2$)
\label{Proposition 3.2}

If $\Phi$ satisfies:

\begin{equation}
\begin{cases}
i u_t + \Delta u=(V*|u|^2)u,\\
u(x,0)=\Phi(x).
\end{cases}
\end{equation}

and if the sequence $(u^{(n)})$ satisfies:

\begin{equation}
\begin{cases}
i u^{(n)}_t + \Delta u^{(n)}=(V*|u^{(n)}|^2)u^{(n)},\\
u^{(n)}(x,0)=\Phi_n(x).
\end{cases}
\end{equation}

where $\Phi_n \in C^{\infty}(\mathbb{T}^2)$ and $\Phi_n \stackrel{H^s}{\longrightarrow} \Phi$, then, one has for all $t$:

$$u^{(n)}(t)\stackrel{H^s}{\longrightarrow} u(t).$$

\end{proposition}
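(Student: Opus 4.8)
The plan is to prove Proposition \ref{Proposition 3.2} as a straightforward consequence of the local-in-time bounds in Proposition \ref{Proposition 3.1}, together with a continuity/bootstrap argument in time. First I would fix $t$ and cover the interval $[0,t]$ (or $[t,0]$ if $t<0$) by finitely many intervals $I_j=[t_j,t_j+\delta]$, $j=0,1,\ldots,K$, where $\delta=\delta(s,E(\Phi),M(\Phi))$ is the length from Proposition \ref{Proposition 3.1}; since energy and mass are continuous on $H^1$ and $\Phi_n\to\Phi$ in $H^s\hookrightarrow H^1$, we have $E(\Phi_n)\to E(\Phi)$, $M(\Phi_n)\to M(\Phi)$, so for $n$ large the same $\delta$ (up to a harmless constant) works simultaneously for $u$ and for all $u^{(n)}$, and $K\lesssim |t|/\delta$ is a fixed finite number independent of $n$.

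The core of the argument is a single-step estimate: on each interval $I_j$, applying Proposition \ref{Proposition 3.1} to the Hartree flow produces globally defined extensions $v$ of $u|_{I_j}$ and $v^{(n)}$ of $u^{(n)}|_{I_j}$ with uniformly bounded $X^{1,\frac12+}$ norms. The difference $w^{(n)}:=v-v^{(n)}$ solves a Hartree-type equation with a forcing term that is a sum of trilinear expressions in $v$, $v^{(n)}$, $\bar v$, $\bar v^{(n)}$, each containing one factor of $w^{(n)}$ or $\bar w^{(n)}$. I would run the standard $X^{s,b}$ contraction/energy estimate on a short time interval: using the multilinear estimates underlying the local theory (Hölder in spacetime together with $(\ref{eq:L4torus})$, or $(\ref{eq:thirdL4estimate})$, and the hypothesis $V\in L^1$ so that convolution with $V$ is bounded on every $H^\sigma$), one gets
\begin{equation}
\label{eq:diffest}
\|w^{(n)}\|_{X^{s,\frac12+}(I_j)} \lesssim \|w^{(n)}(t_j)\|_{H^s} + C\,\delta^{0+}\,\big(\|v\|_{X^{1,\frac12+}}+\|v^{(n)}\|_{X^{1,\frac12+}}\big)^2 \|w^{(n)}\|_{X^{s,\frac12+}(I_j)},
\end{equation}
where $C$ depends only on $s$ and $\|V\|_{L^1}$. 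Shrinking $\delta$ if necessary (it depends only on $E(\Phi),M(\Phi)$, hence is uniform in $n$), the last term is absorbed, yielding $\|u^{(n)}(t_{j+1})-u(t_{j+1})\|_{H^s}\lesssim \|u^{(n)}(t_j)-u(t_j)\|_{H^s}$ with a constant uniform in $n$ and $j$. Iterating over the $K$ intervals gives $\|u^{(n)}(t)-u(t)\|_{H^s}\lesssim C^K\|\Phi_n-\Phi\|_{H^s}\to 0$.

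The main obstacle is bookkeeping rather than analysis: one must ensure that the time-step $\delta$ and all implied constants in $(\ref{eq:diffest})$ depend only on conserved quantities of $\Phi$ (so that they are stable as $n\to\infty$) and not on higher Sobolev norms of $u$ or $u^{(n)}$ — this is exactly what Proposition \ref{Proposition 3.1} guarantees, via the a priori $H^1$ bound and persistence of regularity. A secondary point is handling the non-smooth time cut-off $\chi_{I_j}$ in the $X^{s,b}$ estimates; this is dealt with by $(\ref{eq:timelocalization})$, at the cost of replacing $\frac12+$ by a slightly smaller index, which is harmless. I would remark that this is the two-dimensional analogue of the corresponding argument in \cite{SoSt1}, and refer the reader there for the details that are identical, presenting here only the modifications forced by working on $\mathbb{T}^2$ and with the Hartree nonlinearity $(V*|u|^2)u$.
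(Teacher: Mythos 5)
Your approach is the standard one and is essentially what the paper (and \cite{SoSt1}, to which it defers) uses: exploit the local-in-time bounds of Proposition \ref{Proposition 3.1}, prove a Lipschitz estimate for the difference of two solutions on each short interval with a constant depending only on conserved quantities (so that it is stable as $n\to\infty$), and iterate across a number of intervals $K\lesssim|t|/\delta$ that is uniform in $n$. The logical skeleton — continuity of energy and mass on $H^1$ to make $\delta$ uniform, covering of $[0,t]$, one-step contraction, iteration — is exactly right.

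The one place where the details are misstated is the Lipschitz estimate $(\ref{eq:diffest})$. The trilinear estimate that underlies the local theory (as in the Appendix, see the bound $(\ref{eq:sumJkbound})$) always places the $s$ derivatives on the factor carrying the highest frequency. In the difference expansion
$$(V*|v|^2)v-(V*|v^{(n)}|^2)v^{(n)}=(V*(v\bar v))w^{(n)}+(V*(w^{(n)}\bar v))v^{(n)}+(V*(v^{(n)}\bar w^{(n)}))v^{(n)},$$
there are contributions in which the highest frequency sits on a $v$ or $v^{(n)}$ factor rather than on $w^{(n)}$, and these produce terms of the shape $\|v\|_{X^{s,\frac12+}}\|v\|_{X^{1,\frac12+}}\|w^{(n)}\|_{X^{1,\frac12+}}$ on the right-hand side. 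So the constant in $(\ref{eq:diffest})$ cannot be taken to depend only on $X^{1,\frac12+}$ norms of $v$ and $v^{(n)}$; the $X^{s,\frac12+}$ norms enter as well. This does not break the argument, because Proposition \ref{Proposition 3.1} (via the ball $B$ in its proof) also gives uniform control of $\|v\|_{X^{s,\frac12+}}$ and $\|v^{(n)}\|_{X^{s,\frac12+}}$ on each step, and for fixed $t$ these stay bounded across the finitely many iterations. The cleanest way to organize it is the two-step version used in \cite{SoSt1}: first close a genuine contraction for $w^{(n)}$ at the $X^{1,\frac12+}$ level (constant depending only on $E(\Phi),M(\Phi)$, hence $\delta$ uniform in $n$), then feed that into the $X^{s,\frac12+}$ estimate as a linear, not self-improving, inequality. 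With that correction your proof is complete and matches the intended argument.
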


\vspace{3mm}

The mentioned approximation Lemma allows us to work with smooth solutions and pass to the limit in the end.
Namely, we note that if we take initial data $\Phi_n$ as earlier, then $u^{(n)}(t)$ will belong to $H^{\infty}(\mathbb{T}^2)$ for all $t$. This allows us to rigorously justify all of our calculations.
Now, we want to argue by density. For this, we first need to know that energy and mass are continuous on $H^1$
The fact that mass is continuous on $H^1$ is obvious. To see that energy is continuous on $H^1$, let $1=\frac{1}{1+}+\frac{1}{M}$. Then, by H\"{o}lder's inequality, Young's inequality, and $(\ref{eq:Sobolevembeddingtorus})$, we obtain:

$$|\int (V*(u_1 u_2)) u_3 u_4 dx| \leq \|V*(u_1 u_2)\|_{L^{1+}_x} \|u_3 u_4\|_{L^M_x} $$
\vspace{2mm}
$$\leq \|V\|_{L^1_x} \|u_1\|_{L^{2+}_x} \|u_2\|_{L^{2+}_x} \|u_3\|_{L^{2M}_x} \|u_4\|_{L^{2M}_x}$$

\begin{equation}
\label{eq:energycontinuity}
\lesssim \|u_1\|_{H^1} \|u_2\|_{H^1} \|u_3\|_{H^1} \|u_4\|_{H^1}.
\end{equation}

\vspace{2mm}

Continuity of energy on $H^1$ follows from $(\ref{eq:energycontinuity})$.

\vspace{2mm}

Now, by continuity of mass, energy, and the $H^s$ norm on $H^s$, it follows that:

$$M(\Phi_n) \rightarrow M(\Phi),\,E(\Phi_n) \rightarrow E(\Phi),\,\|\Phi_n\|_{H^s} \rightarrow \|\Phi\|_{H^s}.$$
Suppose that we knew that Theorem \ref{Theorem 1} were true in the case of smooth solutions. Then, for all $t \in \mathbb{R}$, it would follow that:

$$\|u^{(n)}(t)\|_{H^s}\leq C(s,k,E(\Phi_n),M(\Phi_n)) (1+|t|)^{2s+}\|\Phi_n\|_{H^s},$$
The claim for $u$ would now follow by applying the continuity properties of $C$ and the Approximation Lemma.
So, from now on, we can work with $\Phi \in C^{\infty}(\mathbb{T}^2)$.

\vspace{2mm}

\subsection{A higher modified energy and an iteration bound.}
\label{HigherEnergyTorus}

As in \cite{SoSt1,SoSt2}, we let:

$$E^1(u(t)):=\|\mathcal{D}u(t)\|_{L^2}^2.$$
Arguing as in \cite{SoSt1,SoSt2}, we obtain that for some $c \in \mathbb{R}$, one has:

$$\frac{d}{dt} E^1(u(t)) =i c \sum_{n_1+n_2+n_3+n_4=0} \big((\theta(n_1))^2-(\theta(n_2))^2+(\theta(n_3))^2-((\theta(n_4))^2 \big)$$
\begin{equation}
\label{eq:E1primetorus}
\widehat{V}(n_3+n_4) \widehat{u}(n_1) \widehat{\bar{u}}(n_2) \widehat{u}(n_3) \widehat{\bar{u}}(n_4)
\end{equation}

As in the previous works, we consider the \emph{higher modified energy}:

\begin{equation}
\label{eq:E2torus}
E^2(u):=E^1(u)+\lambda_4(M_4;u)
\end{equation}
The quantity $M_4$ will be determined soon.

The modified energy $E^2$ is obtained by adding a ``multilinear correction'' to the modified energy $E^1$ we considered earlier. In order to find $\frac{d}{dt}E^2(u)$, we need to find $\frac{d}{dt}\lambda_4(M_4;u)$. If we fix a multiplier $M_4$, we obtain:

$$\frac{d}{dt} \lambda_4(M_4;u)=$$
\vspace{2mm}
$$-i\lambda_4(M_4(|n_1|^2-|n_2|^2+|n_3|^2-|n_4|^2);u)$$
\vspace{2mm}
$$-i\sum_{n_1+n_2+n_3+n_4+n_5+n_6=0} \big[M_4(n_{123},n_4,n_5,n_6)\widehat{V}(n_1+n_2)$$
\vspace{2mm}
$$-M_4(n_1,n_{234},n_5,n_6)\widehat{V}(n_2+n_3)+M_4(n_1,n_2,n_{345},n_6)\widehat{V}(n_3+n_4)$$
\begin{equation}
\label{eq:lambda4M4primetorus}
-M_4(n_1,n_2,n_3,n_{456})\widehat{V}(n_4+n_5)\big]\widehat{u}(n_1)\widehat{\bar{u}}(n_2)\widehat{u}(n_3)
\widehat{\bar{u}}(n_4)\widehat{u}(n_5)\widehat{\bar{u}}(n_6).
\end{equation}

We can compute that for $(n_1,n_2,n_3,n_4) \in \Gamma_4$, one has:

\begin{equation}
\label{eq:torusdenominator}
|n_1|^2-|n_2|^2+|n_3|^2-|n_4|^2=2 n_{12} \cdot n_{14}.
\end{equation}

We notice that the numerator vanishes not only when $n_{12}=n_{14}=0$, but also when $n_{12}$ and $n_{14}$ are orthogonal.
Hence, on $\Gamma_4$, it is possible for $|n_1|^2-|n_2|^2+|n_3|^2-|n_4|^2$ to vanish, but for $(\theta(n_1))^2-
(\theta(n_2))^2+(\theta(n_3))^2-(\theta(n_4))^2$ to be non-zero. Consequently, unlike in our previous work on the 1D Hartree equation \cite{SoSt1,SoSt2}, we can't cancel the whole quadrilinear term in $(\ref{eq:E1primetorus})$.  We remedy this by
canceling the \emph{non-resonant part} of the quadrilinear term. A similar technique was used in \cite{CKSTT7}. More precisely, given $\beta_0 \ll 1$, which we determine later, we decompose:

$$\Gamma_4 = \Omega_{nr} \sqcup \Omega_r.$$
Here, the set $\Omega_{nr}$ of \emph{non-resonant} frequencies is defined by:

\begin{equation}
\label{eq:nonresonant}
\Omega_{nr}:= \{(n_1,n_2,n_3,n_4) \in \Gamma_4;n_{12},n_{14} \neq 0, |cos \angle(n_{12},n_{14})| > \beta_0\}
\end{equation}
and the set $\Omega_r$ of \emph{resonant} frequencies $\Omega_r$ is defined to be its complement in $\Gamma_4$.

\vspace{2mm}

We now define the multiplier $M_4$ by:

\begin{equation}
\label{eq:M4torus}
M_4(n_1,n_2,n_3,n_4) :=
\begin{cases}
  c \frac{((\theta(n_1))^2-(\theta(n_2))^2+(\theta(n_3))^2-(\theta(n_4))^2)}{|n_1|^2-|n_2|^2+|n_3|^2-|n_4|^2}\widehat{V}(n_3+n_4)\,, \mbox{if }(n_1,n_2,n_3,n_4) \in \Omega_{nr}\\
  \,0,\,  \mbox{if } (n_1,n_2,n_3,n_4) \in \Omega_r.
\end{cases}
\end{equation}

Let us now define the multiplier $M_6$ on $\Gamma_6$ by:

$$M_6(n_1,n_2,n_3,n_4,n_5,n_6):=M_4(n_{123},n_4,n_5,n_6)\widehat{V}(n_1+n_2)-M_4(n_1,n_{234},n_5,n_6)\widehat{V}(n_2+n_3)+$$
\begin{equation}
\label{eq:M6torus}
+M_4(n_1,n_2,n_{345},n_6)\widehat{V}(n_3+n_4)-M_4(n_1,n_2,n_3,n_{456})\widehat{V}(n_4+n_5).
\end{equation}

We now use $(\ref{eq:E1primetorus})$ and $(\ref{eq:lambda4M4primetorus})$, and the construction of $M_4$ and $M_6$ to deduce that \footnote{Since $(\theta(n_1))^2-(\theta(n_2))^2+(\theta(n_3))^2-(\theta(n_4))^2=0$ whenever $n_{12}=0$ or $n_{14}=0$, the terms where $n_{12}=0$ or $n_{14}=0$ don't contribute to the first sum. We henceforth don't have to worry about defining the quantity $\cos(0,\cdot)$}:

$$\frac{d}{dt} E^2(u)=$$
$$\sum_{n_1+n_2+n_3+n_4=0, |cos \angle(n_{12},n_{14})| \leq \beta_0} \big((\theta(n_1))^2-(\theta(n_2))^2+(\theta(n_3))^2-(\theta(n_4))^2 \big) \widehat{V}(n_3+n_4) \widehat{u}(n_1) \widehat{\bar{u}}(n_2) \widehat{u}(n_3) \widehat{\bar{u}}(n_4)+$$
$$+\sum_{n_1+n_2+n_3+n_4+n_5+n_6=0} M_6(n_1,n_2,n_3,n_4,n_5,n_6) \widehat{u}(n_1) \widehat{\bar{u}}(n_2) \widehat{u}(n_3) \widehat{\bar{u}}(n_4) \widehat{u}(n_5) \widehat{\bar{u}}(n_6)$$
\begin{equation}
\label{eq:I+II}
=:I + II.
\end{equation}

Before we proceed, we need to prove pointwise bounds on the multiplier $M_4$. In order to do this, let $(n_1,n_2,n_3,n_4) \in \Gamma_4$ be given. We dyadically localize the frequencies, i.e, we find dyadic integers $N_j$ s.t. $|n_j| \sim N_j$. We then order the $N_j$'s to obtain: $N_1^* \geq N_2^* \geq N_3^* \geq N_4^*$. We slightly abuse notation by writing $\theta(N_j^*)$ for $\theta(N_j^*,0)$.

\begin{lemma}
\label{toruspointwisebound}
With notation as above, the following bound holds:
\begin{equation}
\label{eq:M4boundtorus}
M_4= O \big( \frac{1}{\beta_0} \frac{1}{(N_1^*)^2} \theta(N_1^*) \theta(N_2^*) \big).
\end{equation}
\end{lemma}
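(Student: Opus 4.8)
The plan is to bound the multiplier $M_4$ on $\Omega_{nr}$ by controlling the numerator and denominator of the defining fraction $(\ref{eq:M4torus})$ separately. First I would record the elementary identity $(\ref{eq:torusdenominator})$, which rewrites the denominator as $|n_1|^2-|n_2|^2+|n_3|^2-|n_4|^2 = 2\, n_{12}\cdot n_{14}$. On the non-resonant set $\Omega_{nr}$ we have the lower bound $|n_{12}\cdot n_{14}| = |n_{12}||n_{14}| |\cos\angle(n_{12},n_{14})| > \beta_0 |n_{12}||n_{14}| \geq \beta_0$, since $n_{12},n_{14}$ are nonzero integer vectors and hence each has length at least $1$; so the denominator is bounded below by $2\beta_0$ in absolute value. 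This already disposes of the case where all frequencies are comparable, but to get the full strength $\frac{1}{(N_1^*)^2}$ I would instead keep the $|n_{12}||n_{14}|$ factor and argue that the largest two frequencies $N_1^*, N_2^*$ are comparable (since $n_1+n_2+n_3+n_4=0$ forces $N_1^* \sim N_2^*$), and that one of $|n_{12}|, |n_{14}|$ is $\gtrsim N_1^*$ — this is the standard observation that among $n_{12}=n_1+n_2$ and $n_{14}=n_1+n_4=-(n_2+n_3)$, at least one cannot be much smaller than the top frequency, because otherwise all four $n_j$ would have to nearly cancel in a way incompatible with two of them being of size $\sim N_1^*$.

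Next I would bound the numerator. By the Fundamental Theorem of Calculus / Mean Value Theorem applied to the radial function $r \mapsto \theta^2$, the difference $(\theta(n_1))^2-(\theta(n_2))^2+(\theta(n_3))^2-(\theta(n_4))^2$ is controlled using the constraint $n_1+n_2+n_3+n_4=0$; writing the alternating sum as a discrete double difference and invoking the Double Mean Value Theorem (Proposition \ref{Proposition dva.pet}) with $f=\theta^2$, together with the bound $\|\nabla^2(\theta^2)(\xi)\| \lesssim \frac{(\theta(|\xi|))^2}{|\xi|^2}$ valid for $\theta$ as in $(\ref{eq:theta})$, one gets
\begin{equation*}
\bigl|(\theta(n_1))^2-(\theta(n_2))^2+(\theta(n_3))^2-(\theta(n_4))^2\bigr| \lesssim |n_{12}||n_{14}|\,\frac{(\theta(N_1^*))^2}{(N_1^*)^2}
\end{equation*}
in the regime $N_3^*, N_4^* \ll N_1^*$; in the remaining regime where the two largest frequencies dominate but the smaller ones are comparable to them, one uses instead the cruder bound $\lesssim (\theta(N_1^*))^2 \lesssim \theta(N_1^*)\theta(N_2^*)$ (using $N_1^*\sim N_2^*$) directly and the denominator lower bound $\gtrsim \beta_0$. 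Finally, $|\widehat V(n_3+n_4)| \leq \|V\|_{L^1} \lesssim 1$ by assumption (i) on $V$, so this factor contributes only an absolute constant.

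Combining: the numerator is $\lesssim |n_{12}||n_{14}| (\theta(N_1^*))^2/(N_1^*)^2$, the denominator is $\gtrsim \beta_0 |n_{12}||n_{14}|$, the $\widehat V$ factor is $O(1)$, and $(\theta(N_1^*))^2 \lesssim \theta(N_1^*)\theta(N_2^*)$ because $\theta$ is nondecreasing in $|n|$ and $N_1^*\sim N_2^*$; together these give $M_4 = O\bigl(\frac{1}{\beta_0}\frac{1}{(N_1^*)^2}\theta(N_1^*)\theta(N_2^*)\bigr)$, as claimed. The main obstacle I anticipate is the careful case analysis needed to extract the $(N_1^*)^{-2}$ decay uniformly: one must verify that in every configuration of the ordered frequencies $N_1^* \geq N_2^* \geq N_3^* \geq N_4^*$ consistent with lying in $\Omega_{nr}$, the cancellation in the numerator (captured by the Double Mean Value Theorem) genuinely beats the possible smallness of $|n_{12}||n_{14}|$ in the denominator — in particular handling the borderline case $|n_{12}||n_{14}| \sim 1$ where the $\beta_0$-loss is unavoidable and the numerator must be estimated by its trivial size rather than through the derivative bound. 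The behavior of $\theta$ across the threshold $|n| = N$ (where $\theta$ is only Lipschitz, not $C^2$) requires a small additional remark, handled by noting the one-sided second-derivative bounds still suffice for Proposition \ref{Proposition dva.pet} after an approximation, or by treating the frequencies on either side of $N$ separately.
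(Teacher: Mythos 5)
Your proposal identifies the right tools (the denominator identity $2\,n_{12}\cdot n_{14}$, the bound $|\widehat V|\lesssim 1$, the Double Mean Value Theorem, and the derivative bounds on $\theta^2$), but the case decomposition is wrong, and there is a concrete regime in which the argument fails to produce the required $(N_1^*)^{-2}$ decay.

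You split cases according to whether $N_3^*,N_4^*\ll N_1^*$, using the Double MVT in the first branch and a ``cruder bound'' (numerator $\lesssim(\theta(N_1^*))^2$, denominator $\gtrsim\beta_0$) in the second. The second branch yields only $M_4\lesssim(\theta(N_1^*))^2/\beta_0$, which is larger than the target by a factor of $(N_1^*)^2$. This is a genuine gap, not a loss of constants: take $n_1=(K,0)$, $n_2=(-K,1)$, $n_3=(K,0)$, $n_4=(-K,-1)$ with $K\gg N$. All four frequencies have magnitude $\sim K=N_1^*$, so this configuration lies in your ``remaining'' branch; $n_{12}=(0,1)$ and $n_{14}=(0,-1)$, so $|\cos\angle(n_{12},n_{14})|=1>\beta_0$ (the tuple is in $\Omega_{nr}$), and $|n_{12}||n_{14}|=1$, so the denominator contributes no decay at all. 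Here the numerator is $2\big[(\theta(K))^2-(\theta(\sqrt{K^2+1}))^2\big]\sim(\theta(K))^2/K^2$ while the denominator equals $-2$, so $M_4\sim(\theta(K))^2/K^2$: the crude numerator bound misses exactly the cancellation that supplies the two missing powers of $N_1^*$, and only the Double MVT detects it. The same example also refutes your intermediate claim that ``one of $|n_{12}|,|n_{14}|$ is $\gtrsim N_1^*$''. Conversely, your first branch $N_3^*,N_4^*\ll N_1^*$ contains configurations such as $n_1=(K,0)$, $n_2=(-1,1)$, $n_3=(0,-1)$, $n_4=(1-K,0)$, where $|n_{12}|\sim K$; applying the Double MVT with $x=n_1$, $\eta=-n_{12}$, $\mu=-n_{14}$ is illegitimate there because $|\eta|$ is not small, and what works is the \emph{single} MVT on the pairs $(n_1,n_4)$ and $(n_2,n_3)$ (both shifts by $n_{14}$). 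The correct decomposition, used in the paper, is by the sizes of $|n_{12}|$ and $|n_{14}|$ relative to $|n_1|$: both comparable to $|n_1|$ (denominator supplies $(N_1^*)^2$, crude numerator bound suffices), exactly one small (single MVT on the pairing $(n_1,n_4)$, $(n_2,n_3)$), both small (Double MVT). Your split by $N_3^*,N_4^*$ does not align with this and needs to be replaced.
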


\begin{proof}
By construction of the set $\Omega_{nr}$, and by the fact that $|\widehat{V}| \lesssim 1$, we note that:

\begin{equation}
\label{eq:M4boundequivalent}
|M_4| \lesssim \frac{|(\theta(n_1))^2-(\theta(n_2))^2+(\theta(n_3))^2-(\theta(n_4))^2|}{|n_{12}||n_{14}|\beta_0}.
\end{equation}

Let us assume, without loss of generality, that:

\begin{equation}
\label{eq:n1geqn2n3n4}
|n_1| \geq |n_2|,|n_3|,|n_4|, \,\mbox{and} \, |n_{12}| \geq |n_{14}|.
\end{equation}

We now have to consider three cases:

\vspace{3mm}

\textbf{Case 1:} $|n_1| \sim |n_{12}| \sim |n_{14}|$

\vspace{2mm}

In this Case, one has:

$$M_4= O \big(\frac{1}{\beta_0} \frac{(\theta(n_1))^2}{|n_1|^2}\big)= O \big( \frac{1}{\beta_0} \frac{1}{(N_1^*)^2}\theta(N_1^*)\theta(N_2^*)\big).$$

\textbf{Case 2:} $|n_1| \sim |n_{12}| \gg |n_{14}|$

\vspace{2mm}

We use the \emph{Mean Value Theorem}, and monotonicity properties of the function $\frac{(\theta(n))^2}{|n|}$ to deduce:

\begin{equation}
\label{eq:Case2term1}
(\theta(n_1))^2-(\theta(n_4))^2= (\theta(n_1))^2-(\theta(n_1-n_{14}))^2= O \big(|n_{14}| \frac{(\theta(n_1))^2}{|n_1|} \big).
\end{equation}

$$(\theta(n_2))^2-(\theta(n_3))^2= (\theta(n_3+n_{14}))^2-(\theta(n_3))^2=$$
\begin{equation}
\label{eq:Case2term2}
O \big(|n_{14}| \sup_{N \leq |z| \lesssim |n_1|}
\frac{(\theta(z))^2}{|z|} \big)= O \big(|n_{14}| \frac{(\theta(n_1))^2}{|n_1|} \big).
\end{equation}
Using $(\ref{eq:M4boundequivalent})$, $(\ref{eq:Case2term1})$, $(\ref{eq:Case2term2})$, and the fact that $|n_{12}| \sim |n_1|$, it follows that:

$$M_4=O \big( \frac{(\theta(n_1))^2}{|n_1|^2 \beta_0} \big) = O \big( \frac{1}{\beta_0} \frac{1}{(N_1^*)^2}\theta(N_1^*)\theta(N_2^*)\big).$$

\textbf{Case 3:} $|n_1| \gg |n_{12}|,|n_{14}|$

\vspace{2mm}

We write:
$$(\theta(n_1))^2-(\theta(n_2))^2+(\theta(n_3))^2-(\theta(n_4))^2 = (\theta(n_1))^2-(\theta(n_1-n_{12}))^2+(\theta(n_1-n_{12}-n_{14}))^2-(\theta(n_1-n_{14}))^2.$$
By using the \emph{Double Mean-Value Theorem} $(\ref{eq:DoubleMVT})$, it follows that this expression is $O \big( \frac{(\theta(n_1))^2}{|n_1|^2}|n_{12}||n_{14}| \big)$.

Consequently:

$$M_4= O \big( \frac{1}{\beta_0} \frac{1}{(N_1^*)^2}\theta(N_1^*)\theta(N_2^*) \big).$$
The Lemma now follows.

\end{proof}

Let us choose:

\begin{equation}
\label{eq:theta0choice}
\beta_0 \sim \frac{1}{N}.
\end{equation}
The reason why we choose such a $\beta_0$ will become clear later. For details, see Remark \ref{Remark 3.0}.

Hence Lemma \ref{toruspointwisebound} implies:

\begin{equation}
\label{eq:toruspointwiseboundtheta0}
M_4 = O \big( \frac{N}{(N_1^*)^2}\theta(N_1^*)\theta(N_2^*) \big).
\end{equation}

The bound from $(\ref{eq:toruspointwiseboundtheta0})$ allows us to deduce the equivalence of $E^1$ and $E^2$.
We have the following bound:

\begin{proposition}
\label{E1E2equivalencetorus}
For each fixed time $t$, one has:
\begin{equation}
\label{eq:E1E2ekvivalencija}
E^1(u(t)) \sim E^2(u(t)).
\end{equation}
Here, the constant is independent of $t$ and $N$, as long as $N$ is sufficiently large.
\end{proposition}

\begin{proof}
We fix a time $t$, and we write $E^j(u)$ instead of $E^j(u(t))$, $j=1,2$ for simplicity of notation.
We estimate $E^2(u)-E^1(u)=\lambda_4(M_4;u)$. By construction, one has:

$$|\lambda_4(M_4;u)| \lesssim \sum_{n_1+n_2+n_3+n_4=0} |M_4(n_1,n_2,n_3,n_4)| |\widehat{u}(n_1)|
|\widehat{\bar{u}}(n_2)||\widehat{u}(n_3)||\widehat{\bar{u}}(n_4)|.$$
Let us dyadically localize the $n_j$, i.e., we find $N_j$ dyadic integers such that $|n_j| \sim N_j$.
We consider the case when $N_1 \geq N_2 \geq N_3 \geq N_4$. The other cases are analogous.
We know that the nonzero contributions occur when:

\begin{equation}
\label{eq:Njlokalizacijatorus}
N_1 \sim N_2 \gtrsim N.
\end{equation}
Let us denote the corresponding contribution to $\lambda_4(M_4;u)$ by $I_{N_1,N_2,N_3,N_4}$.
We use Parseval's identity and $(\ref{eq:toruspointwiseboundtheta0})$ to deduce that:

$$|I_{N_1,N_2,N_3,N_4}| \lesssim \sum_{n_1+n_2+n_3+n_4=0, |n_j| \sim N_j} \frac{N}{N_1^2} |\widehat{\mathcal{D}u}_{N_1}(n_1)|
|\widehat{\mathcal{D} \bar{u}}_{N_2}(n_2)| |\widehat{u}_{N_3}(n_3)| |\widehat{\bar{u}}_{N_4}(n_4)|.$$
Let us define $F_j: j=1,\ldots,4$ by:

$$\widehat{F_1}:=|\widehat{\mathcal{D}u}_{N_1}|, \widehat{F_2}:=|\widehat{\mathcal{D}u}_{N_2}|,
\widehat{F_3}:=|\widehat{u}_{N_3}|,\widehat{F_4}:=|\widehat{u}_{N_4}|.$$
By Parseval's identity, one has:

$$|I_{N_1,N_2,N_3,N_4}| \lesssim \frac{N}{N_1^2} \int_{\mathbb{T}^2} F_1 \overline{F_2} F_3 \overline{F_4} dx$$
which by an $L^2_x, L^2_x, L^{\infty}_x, L^{\infty}_x$ H\"{o}lder's inequality is:
$$\lesssim \frac{N}{N_1^2} \|F_1\|_{L^2_x} \|F_2\|_{L^2_x} \|F_3\|_{L^{\infty}_x} \|F_4\|_{L^{\infty}_x}.$$
Furthermore, we use Sobolev embedding, and the fact that taking absolute values in the Fourier transform doesn't change Sobolev norms to deduce that this expression is:

$$\lesssim \frac{N}{N_1^2} \|F_1\|_{L^2_x} \|F_2\|_{L^2_x} \|F_3\|_{H^{1+}_x} \|F_4\|_{H^{1+}_x}
\lesssim \frac{N}{N_1^2} \|\mathcal{D}u_{N_1}\|_{L^2_x} \|\mathcal{D}u_{N_2}\|_{L^2_x} \|u_{N_3}\|_{H^{1+}_x}
\|u_{N_4}\|_{H^{1+}_x} \lesssim$$
$$\lesssim \frac{N}{N_1^{2-}} \|\mathcal{D}u\|_{L^2_x}^2 \|u\|_{H^1_x}^2 \lesssim \frac{N}{N_1^{2-}}E^1(u).$$
Here, we used the fact that $\|u\|_{H^1_x} \lesssim 1$.

We now recall $(\ref{eq:Njlokalizacijatorus})$ and sum in the $N_j$ to deduce that:

$$|E^2(u)-E^1(u)|=|\lambda_4(M_4;u)| \lesssim \frac{1}{N^{1-}}E^1(u).$$

The claim now follows.

\end{proof}

Let $\delta>0,v$ be as in Proposition \ref{Proposition 3.1}. For $t_0 \in \mathbb{R}$, we are interested in estimating:

$$E^2(u(t_0+\delta))-E^2(u(t_0))= \int_{t_0}^{t_0+\delta} \frac{d}{dt}
E^2(u(t)) dt = \int_{t_0}^{t_0+\delta} \frac{d}{dt}
E^2(v(t)) dt.$$

\vspace{2mm}

The iteration bound that we will show is:

\begin{lemma}
\label{Bigstartorus}
For all $t_0 \in \mathbb{R}$, one has:
$$\big| E^2(u(t_0+\delta))-E^2(u(t_0)) \big| \lesssim \frac{1}{N^{1-}}E^2(u(t_0)).$$
\end{lemma}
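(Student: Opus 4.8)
The plan is to estimate $\big|E^2(u(t_0+\delta))-E^2(u(t_0))\big| = \big|\int_{t_0}^{t_0+\delta}\frac{d}{dt}E^2(v(t))\,dt\big|$ by controlling the two terms $I$ and $II$ from the decomposition $(\ref{eq:I+II})$ separately, after replacing $u$ by the globally defined extension $v$ of Proposition \ref{Proposition 3.1} and inserting a sharp time cutoff $\chi=\chi_{[t_0,t_0+\delta]}$ so that the $X^{s,b}$ machinery applies. First I would reduce the time integral of $I$ and $II$ to spacetime multilinear expressions: integrating in $t$ over $[t_0,t_0+\delta]$ and using Parseval in spacetime, each term becomes a $\lambda_4$- or $\lambda_6$-type integral with $\widehat u$ replaced by $\widetilde{\chi v}$ (or $\widetilde{v}$) on the corresponding factors. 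The cost of the cutoff is handled by $(\ref{eq:timelocalization})$, which trades $\chi v$ in $X^{s,b}$ for $v$ in $X^{s,b+}$ with a constant independent of $t_0,\delta$; since $b=\frac12+$ we keep $v$ in $X^{s,\frac12+}$, and Proposition \ref{Proposition 3.1} gives $\|\mathcal Dv\|_{X^{0,\frac12+}}\lesssim \|\mathcal Du(t_0)\|_{L^2}$ and $\|v\|_{X^{1,\frac12+}}\lesssim 1$.

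Next I would bound term $II$. Using the pointwise bound $(\ref{eq:toruspointwiseboundtheta0})$ on $M_4$ — hence on $M_6$ via its definition $(\ref{eq:M6torus})$, which gives a bound of the shape $\frac{N}{(N_1^*)^2}\theta(N_1^*)\theta(N_2^*)$ times $|\widehat V|\lesssim 1$ — I would dyadically localize the six frequencies, distribute the weights $\theta$ onto the two largest frequencies to convert them into $\mathcal D$-factors (and absorb the remaining four into $H^1$-factors), put everything in $L^6_{t,x}$ or via a Hölder split $L^2,L^2,L^\infty,L^\infty,L^\infty,L^\infty$, and apply the Strichartz-type estimates $(\ref{eq:LMtx})$, $(\ref{eq:SobolevembeddingXsb})$ together with $(\ref{eq:L4torus})$. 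The decay $\frac{N}{(N_1^*)^2}=\frac{1}{(N_1^*)^{1-}}\cdot\frac{1}{(N_1^*)^{1+}}\cdot N$ — more precisely, writing two powers of $N_1^*$ as $N_1^{*\,0+}$ each to beat the $X^{1+,\cdot}$ losses from $L^\infty$-embeddings and keeping $(N_1^*)^{2-}$ in the denominator against $N$ in the numerator — yields a net factor $\frac{N}{N^{2-}}=\frac{1}{N^{1-}}$ after summing the geometric series in all dyadic parameters, times $\|\mathcal Du(t_0)\|_{L^2}^2\|v\|_{X^{1,\frac12+}}^4\lesssim E^1(u(t_0))$. By Proposition \ref{E1E2equivalencetorus}, $E^1(u(t_0))\sim E^2(u(t_0))$, which is what we want; multiplying by $\delta=\delta(s,E(\Phi),M(\Phi))\lesssim 1$ is harmless.

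For term $I$ — the genuinely resonant quadrilinear piece, restricted to $|\cos\angle(n_{12},n_{14})|\le\beta_0$ — the key is that the angular restriction is small, $\beta_0\sim\frac1N$, so the Angular Improved Strichartz Estimate (the periodic analogue of Proposition \ref{AngleImprovedStrichartz}) is available. I would bound the symbol $(\theta(n_1))^2-(\theta(n_2))^2+(\theta(n_3))^2-(\theta(n_4))^2$ by $O(\theta(N_1^*)\theta(N_2^*))$ trivially (no denominator this time), pair $n_1$ with $n_2$ and $n_3$ with $n_4$, and apply a Cauchy–Schwarz in spacetime: $|I|\lesssim \theta(N_1^*)\theta(N_2^*)\,\|u_{N_1}\bar u_{N_2}\|_{L^2_{t,x}}^{ang}\,\|u_{N_3}\bar u_{N_4}\|_{L^2_{t,x}}$, where the first factor carries the angular cutoff and hence gains $\beta_0^{1/2}\sim N^{-1/2}$ from the angular estimate, while the second is controlled by $(\ref{eq:L4torus})$ and Hölder. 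Converting $\theta(N_1^*)u_{N_1}$, $\theta(N_2^*)u_{N_2}$ into $\mathcal D$-factors and the low-frequency pair into $H^1$-factors (with a tiny $N_3^{0+}N_4^{0+}$ loss absorbed by the dyadic sum), the total is $\lesssim N^{-1/2+}\,E^1(u(t_0))$, which is better than $N^{-1+}$... — here I should be careful: the expected final gain is $N^{-1+}$, so in fact only one further half-power is needed, and it comes precisely from a second angular/Strichartz gain or from the weight localization $N_1^*\gtrsim N$ combined with $\theta(N_1^*)\theta(N_2^*)\lesssim (N_1^*/N)^{s}\cdots$; this bookkeeping is where I expect to have to be most careful. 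The main obstacle, then, is term $I$: unlike in the 1D papers \cite{SoSt1,SoSt2} one cannot remove it by the correction $M_4$, and squeezing the full $\frac{1}{N^{1-}}$ out of it requires combining the smallness of $\beta_0$, the angular Strichartz estimate, and the weight concentration on the top two frequencies in just the right way — everything else (term $II$, the cutoff, passing from $u$ to $v$) is routine given the estimates already assembled in Section 2.
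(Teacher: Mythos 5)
Your treatment of term $II$ (the sextilinear piece) is broadly compatible with the paper's: the paper also uses the pointwise bound $(\ref{eq:toruspointwiseboundtheta0})$ on $M_4$, distributes the two $\theta$-weights onto the two largest-frequency factors to make $\mathcal Dv$'s, and closes with an $L^2,L^M,\ldots,L^{4\pm}$ H\"older split together with $(\ref{eq:LMtx})$, $(\ref{eq:L4+})$, $(\ref{eq:L4-})$. (The paper splits into two cases according to whether the ``cubed'' frequency $|n_{123}|$ is one of the top two or not, but the core mechanism matches what you describe.) So that half of your plan is sound.

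The gap is in your treatment of term $I$ (the resonant quadrilinear piece $A$), and you have actually flagged it yourself. Your plan is to bound the symbol trivially by $O(\theta(N_1^*)\theta(N_2^*))$ and then gain $\beta_0^{1/2}\sim N^{-1/2}$ from an angular bilinear Strichartz estimate. Two problems. First, the angular estimate (Proposition \ref{AngleImprovedStrichartz}) is stated in the paper only on $\mathbb{R}^2$ and is used only in the non-periodic Section 4; the paper never claims a periodic analogue, and indeed bilinear refinements of Strichartz are much more delicate on $\mathbb{T}^2$, so you cannot simply invoke it here. Second, even granting such an estimate, $\beta_0^{1/2}\sim N^{-1/2}$ is only half of the required $N^{-1+}$, and you explicitly say you do not see where the other half comes from. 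The paper's proof resolves this by \emph{not} using the angular constraint at all in term $A$ on the torus. Instead it exploits the cancellation in the symbol: since the two largest frequencies ($N_1\sim N_2$) enter the multiplier with opposite signs, a Mean Value / Double Mean Value argument (as in Lemma \ref{toruspointwisebound}) gives
$$\big((\theta(n_1))^2-(\theta(n_2))^2+(\theta(n_3))^2-(\theta(n_4))^2\big)\widehat{V}(n_3+n_4)=O\Big(|n_{12}||n_{14}|\,\frac{\theta(N_1^*)\theta(N_2^*)}{(N_1^*)^2}\Big),$$
and then the crucial observation $|n_{12}|=|n_{34}|\lesssim N_3^*$ (together with $|n_{14}|\lesssim N_1^*$) upgrades this to $O\big(\tfrac{N_3^*}{N_1^*}\theta(N_1^*)\theta(N_2^*)\big)$. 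The factor $N_3^*$ is then absorbed as a derivative on the third (low-frequency) factor, and a plain $L^4_{t,x}\times L^4_{t,x}\times L^{4+}_{t,x}\times L^{4-}_{t,x}$ H\"older with $(\ref{eq:L4torus})$, $(\ref{eq:L4+})$, $(\ref{eq:L4-})$ closes the estimate at the rate $\frac{1}{(N_1^*)^{1-}}\lesssim\frac{1}{N^{1-}}$. This is also why Remark \ref{Remark 3.0} points out that $\beta_0$ appears only in the bound for $B$, not for $A$: the smallness of $\beta_0$ is irrelevant to $A$ on the torus. To repair your proof you should replace the angular-Strichartz step by this symbol-cancellation argument; without it, your bound on $I$ stalls at $N^{-1/2+}$.
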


\vspace{2mm}

Arguing similarly as in \cite{SoSt1,SoSt2}, Theorem \ref{Theorem 1} will follow from Lemma \ref{Bigstartorus}.
We recall the proof for completeness.

\begin{proof}(of Theorem \ref{Theorem 1} assuming Lemma \ref{Bigstartorus})

The point is that we can iterate the following bound (obtained from Lemma \ref{Bigstartorus}):

$$E^2(u(t_0+\delta)) \leq (1+\frac{C}{N^{1-}}) E^2(u(t_0))$$
$\sim N^{1-}$ times with a uniform time step, and the size of $E^2(t)$ will grow by at most a constant factor (and not as an exponential function in $t$). We hence obtain that for $T\sim N^{1-}$, one has:

$$\|\mathcal{D}u(T)\|_{L^2} \lesssim \|\mathcal{D}\Phi\|_{L^2}.$$
By recalling $(\ref{eq:bound on D})$, it follows that:

$$\|u(T)\|_{H^s} \lesssim N^s \|\Phi\|_{H^s}$$
and hence:
$$\|u(T)\|_{H^s} \lesssim T^{s+}\|\Phi\|_{H^s} \lesssim (1+T)^{s+}\|\Phi\|_{H^s}.$$

\vspace{2mm}

This proves Theorem \ref{Theorem 1} for times $t \geq 1$. The claim for times $t \in [0,1]$ follows by local well-posedness theory. The claim for negative times holds by time-reversibility.

\end{proof}

\vspace{2mm}

We now have to prove Lemma \ref{Bigstartorus}.

\begin{proof}(of Lemma \ref{Bigstartorus})

Let us without loss of generality consider $t_0=0$. The general claim will follow by time translation, and the fact that all of the implied constants are uniform in time. Let $v$ be the function constructed in Proposition \ref{Proposition 3.1}, corresponding to $t_0=0$.

By $(\ref{eq:I+II})$, and with notation as in this equation, we need to estimate:

$$\int_0^{\delta} \Big( \sum_{n_1+n_2+n_3+n_4=0, |cos \angle(n_{12},n_{14})| \leq \beta_0} \big((\theta(n_1))^2-(\theta(n_2))^2+(\theta(n_3))^2-(\theta(n_4))^2 \big) \cdot$$
$$ \cdot \widehat{V}(n_3+n_4) \widehat{v}(n_1) \widehat{\bar{v}}(n_2) \widehat{v}(n_3) \widehat{\bar{v}}(n_4)+$$
$$+\sum_{n_1+n_2+n_3+n_4+n_5+n_6=0} M_6(n_1,n_2,n_3,n_4,n_5,n_6) \widehat{v}(n_1) \widehat{\bar{v}}(n_2) \widehat{v}(n_3) \widehat{\bar{v}}(n_4) \widehat{v}(n_5) \widehat{\bar{v}}(n_6) \Big) dt=$$
$$=\int_0^{\delta} I dt + \int_0^{\delta} II dt =: A + B.$$

We now have to estimate $A$ and $B$ separately. Throughout our calculations, let us denote by $\chi=\chi(t)=\chi_{[0,\delta]}(t)$.

\subsubsection{Estimate of $A$ (Quadrilinear Terms)}

By symmetry, we can consider without loss of generality the contribution when:

$$|n_1| \geq |n_2|,|n_3|,|n_4|, \mbox{and}\, |n_2| \geq |n_4|.$$
We note that when all $|n_j| \leq N$, one has:
$(\theta(n_1))^2-(\theta(n_2))^2+(\theta(n_3))^2-(\theta(n_4))^2=0$.
Hence, we need to consider the contribution in which one has:
$$|n_1|>N, |cos \angle(n_{12},n_{14})| \leq \beta_0.$$
We dyadically localize the frequencies: $|n_j| \sim N_j;j=1,\ldots,4$. We order the $N_j$ to obtain $N_j^* \geq N_2^* \geq N_3^* \geq N_4^*$. Since $n_1+n_2+n_3+n_4=0$, we know that:

\begin{equation}
\label{eq:NjlokalizacijaA}
N_1^* \sim N_2^* \gtrsim N.
\end{equation}

Let us note that $N_1 \sim N_2$. Namely, if it were the case that: $N_1 \gg N_2$, then, one would also have: $N_1 \gg N_4$, and the vectors $n_{12}$ and $n_{14}$ would form a very small angle. Hence, $cos \angle(n_{12},n_{14})$ would be close to $1$, which would be a contradiction to the assumption that $|cos \angle(n_{12},n_{14})| \leq \beta_0$.
Consequently:

\begin{equation}
\label{eq:NjstarA}
N_1 \sim N_2 \sim N_1^* \gtrsim N.
\end{equation}

We denote the corresponding contribution to $A$ by $A_{N_1,N_2,N_3,N_4}$. In other words:

$$A_{N_1,N_2,N_3,N_4}:=$$
$$\int_0^{\delta} \sum_{n_1+n_2+n_3+n_4=0, |cos \angle(n_{12},n_{14})| \leq \beta_0} \big((\theta(n_1))^2-(\theta(n_2))^2+(\theta(n_3))^2-(\theta(n_4))^2 \big) \widehat{V}(n_3+n_4)$$
$$\widehat{v}_{N_1}(n_1) \widehat{\bar{v}}_{N_2}(n_2) \widehat{v}_{N_3}(n_3) \widehat{\bar{v}}_{N_4}(n_4) dt.$$
Arguing analogously as in the proof of Lemma \ref{toruspointwisebound}, it follows that for the $n_j$ that occur in the above sum, one has:

\begin{equation}
\label{eq:multiplierA}
\big((\theta(n_1))^2-(\theta(n_2))^2+(\theta(n_3))^2-(\theta(n_4))^2 \big) \widehat{V}(n_3+n_4)= O \big(|n_{12}||n_{14}|
\frac{\theta(N_1^*)\theta(N_2^*)}{(N_1^*)^2} \big).
\end{equation}
By $(\ref{eq:NjstarA})$, it follows that $|n_3|,|n_4| \lesssim N_3^*$. Consequently:

$$|n_{12}|=|n_{34}| \leq |n_3|+|n_4| \lesssim N_3^*.$$
One also knows that:

$$|n_{14}| \leq |n_1|+|n_4| \lesssim N_1^*.$$

Substituting the last two inequalities into the multiplier bound $(\ref{eq:multiplierA})$, and using Parseval's identity in time, it follows that:

$$|A_{N_1,N_2,N_3,N_4}| \lesssim \sum_{n_1+n_2+n_3+n_4=0, |cos \angle(n_{12},n_{14})| \leq \beta_0} \int_{\tau_1+\tau_2+\tau_3+\tau_4=0} N_3^* N_1^* \frac{\theta(N_1^*)\theta(N_2^*)}{(N_1^*)^2}$$
$$ |\widetilde{v}_{N_1}(n_1,\tau_1)| |\widetilde{\bar{v}}_{N_2}(n_2,\tau_2)| |\widetilde{v}_{N_3}(n_3,\tau_3)| |(\chi \bar{v})\,\widetilde{}_{N_4}(n_4,\tau_4)| d\tau_j.$$
\vspace{2mm}
$$\lesssim \frac{1}{N_1^*} \sum_{n_1+n_2+n_3+n_4=0} \int_{\tau_1+\tau_2+\tau_3+\tau_4=0} |(\mathcal{D}v)\,\widetilde{}_{N_1}(n_1,\tau_1)| |(\mathcal{D}\bar{v})\,\widetilde{}_{N_2}(n_2,\tau_2)| |
(\nabla v)\,\widetilde{}_{N_3}(n_3,\tau_3)| |(\chi \bar{v})\,\widetilde{}_{N_4}(n_4,\tau_4)| d\tau_j.$$
Let us define $F_j;j=1,\ldots,4$ by:

$$\widetilde{F_1}:= |(\mathcal{D}v)\,\widetilde{}_{N_1}|, \widetilde{F_2}:= |(\mathcal{D}v)\,\widetilde{}_{N_2}|, \widetilde{F_3}:= |(\nabla v)\,\widetilde{}_{N_3}|, \widetilde{F_4}:= |(\chi v)\,\widetilde{}_{N_4}|.$$
Consequently, by Parseval's identity:

$$|A_{N_1,N_2,N_3,N_4}| \lesssim \frac{1}{N_1^*} \int_{\mathbb{R}} \int_{\mathbb{T}^2} F_1 \overline{F_2} F_3 \overline{F_4} dx dt$$
By using an $L^4_{t,x}, L^4_{t,x}, L^{4+}_{t,x}, L^{4-}_{t,x}$ H\"{o}lder inequality, the corresponding term is:

$$\lesssim \frac{1}{N_1^*} \|F_1\|_{L^4_{t,x}} \|F_2\|_{L^4_{t,x}} \|F_3\|_{L^{4+}_{t,x}} \|F_4\|_{L^{4-}_{t,x}}$$
By using $(\ref{eq:L4torus})$, $(\ref{eq:L4+})$, $(\ref{eq:L4-})$, and the fact that taking absolute values in the spacetime Fourier transforms doesn't change the $X^{s,b}$ norm, it follows that this term is:

$$\lesssim \frac{1}{N_1^*} \|\mathcal{D}v_{N_1}\|_{X^{0+,\frac{1}{2}+}} \|\mathcal{D}v_{N_2}\|_{X^{0+,\frac{1}{2}+}} \|v_{N_3}\|_{X^{1+,\frac{1}{2}+}} \|(\chi v)_{N_4}\|_{X^{0+,\frac{1}{2}-}}$$
By using frequency localization and $(\ref{eq:timelocalization})$, this expression is:

$$\lesssim \frac{1}{(N_1^*)^{1-}} \|\mathcal{D}v\|_{X^{0,\frac{1}{2}+}}^2 \|v\|_{X^{1,\frac{1}{2}+}}^2 \lesssim \frac{1}{(N_1^*)^{1-}} E^1(\Phi).$$

In the last inequality, we used Proposition \ref{Proposition 3.1}.
By using the previous inequality, and by recalling $(\ref{eq:E1E2ekvivalencija})$, it follows that:

\begin{equation}
\label{eq:BoundonANj}
|A_{N_1,N_2,N_3,N_4}| \lesssim \frac{1}{(N_1^*)^{1-}} E^2(\Phi).
\end{equation}

Using $(\ref{eq:BoundonANj})$, summing in the $N_j$, and using $(\ref{eq:NjlokalizacijaA})$ to deduce that:

\begin{equation}
\label{eq:BoundonA}
|A| \lesssim \frac{1}{N^{1-}} E^2(\Phi).
\end{equation}

\vspace{3mm}

\subsubsection{Estimate of $B$ (Sextilinear Terms)}

Let us consider just the first term in $B$ coming from the summand $M_4(n_{123},n_4,n_5,n_6)$ in the definition of $M_6$. The other terms are bounded analogously. In other words, we want to estimate:

$$B^{(1)}:= \int_0^{\delta} \sum_{n_1+n_2+n_3+n_4+n_5+n_6=0} M_4(n_{123},n_4,n_5,n_6) \widehat{(v\bar{v}v)}(n_1+n_2+n_3)
\widehat{\bar{v}}(n_4) \widehat{v}(n_5) \widehat{\bar{v}}(n_6) dt$$
We now dyadically localize $n_{123},n_4,n_5,n_6$, i.e., we find $N_j;j=1, \ldots, 4$ such that:
$$|n_{123}| \sim N_1, |n_4| \sim N_2, |n_5| \sim N_3, |n_6| \sim N_4.$$
Let us define:

$$B^{(1)}_{N_1,N_2,N_3,N_4}:= \int_0^{\delta} \sum_{n_1+n_2+n_3+n_4+n_5+n_6=0} M_4(n_{123},n_4,n_5,n_6) \widehat{(v\bar{v}v)}_{N_1}(n_1+n_2+n_3)
\widehat{\bar{v}}_{N_2}(n_4) \widehat{v}_{N_3}(n_5) \widehat{\bar{v}}_{N_4}(n_6) dt$$
We now order the $N_j$ to obtain: $N_1^* \geq N_2^* \geq N_3^* \geq N_4^*$. As before, we know the following localization bound:

\begin{equation}
\label{eq:NjlokalizacijaB}
N_1^* \sim N_2^* \gtrsim N.
\end{equation}

In order to obtain a bound on the wanted term, we have to consider two cases, depending on whether $N_1$ is among the two larger frequencies or not.

\vspace{3mm}

\textbf{Case 1:} $N_1=N_1^*$ or $N_1=N_2^*$

\vspace{2mm}

\textbf{Case 2:} $N_1=N_3^*$ or $N_1=N_4^*$

\vspace{3mm}

\textbf{Case 1:}

\vspace{2mm}

It suffices to consider the case when $N_1=N_1^*, N_2=N_2^*, N_3=N_3^*, N_4=N_4^*$. The other cases are analogous.
We use $(\ref{eq:toruspointwiseboundtheta0})$ and Parseval's identity to obtain that:

$$|B^{(1)}_{N_1,N_2,N_3,N_4}| \lesssim$$
\vspace{2mm}
$$\sum_{n_1+\cdots+n_6=0} \int_{\tau_1+\cdots+\tau_6=0} \frac{N}{(N_1^*)^2} \theta(N_1^*)\theta(N_2^*) |(v\bar{v}v)\,\widetilde{}_{N_1}(n_1+n_2+n_3,\tau_1+\tau_2+\tau_3)|
|\widetilde{\bar{v}}_{N_2}(n_4,\tau_4)||(\chi v)\,\widetilde{}_{N_3}(n_5,\tau_5)||\widetilde{\bar{v}}_{N_4}(n_6,\tau_6)| d \tau_j.$$
Since $|(v\bar{v}v)\,\widetilde{}_{N_1}| \leq |(v\bar{v}v)\,\widetilde{}\,|$, and since $\theta(N_1^*) \sim \theta(n_1+n_2+n_3) \lesssim \theta(n_1)+\theta(n_2)+\theta(n_3)$, by symmetry, it follows that we just have to bound:

$$K_{N_1,N_2,N_3,N_4}:=$$
$$\sum_{n_1+\cdots+n_6=0}\int_{\tau_1+\cdots+\tau_6=0} \frac{N}{(N_1^*)^2}
\theta(n_1)|\widetilde{v}(n_1,\tau_1)||\widetilde{\bar{v}}(n_2,\tau_2)||\widetilde{v}(n_3,\tau_3)|
\theta(N_2)|\widetilde{\bar{v}}_{N_2}(n_4,\tau_4)||(\chi v)\,\widetilde{}_{N_3}(n_5,\tau_5)|
|\widetilde{\bar{v}}_{N_4}(n_4,\tau_4)| d\tau_j \lesssim $$
\vspace{2mm}
$$\sum_{n_1+\cdots+n_6=0}\int_{\tau_1+\cdots+\tau_6=0} \frac{N}{(N_1^*)^2}
|(\mathcal{D}v)\,\widetilde{}(n_1,\tau_1)||\widetilde{\bar{v}}(n_2,\tau_2)||\widetilde{v}(n_3,\tau_3)|
|(\mathcal{D}\bar{v})\,\widetilde{}_{N_2}(n_4,\tau_4)||(\chi v)\,\widetilde{}_{N_3}(n_5,\tau_5)|
|\widetilde{\bar{v}}_{N_4}(n_4,\tau_4)| d\tau_j.$$

\vspace{2mm}

Let us define the functions $F_j; j=1,\ldots,6$ by:

$$\widetilde{F_1}:=|(\mathcal{D}v)\,\widetilde{}\,| ,\widetilde{F_2}=\widetilde{F_3}:=|\widetilde{v}| ,\widetilde{F_4}:=|(\mathcal{D}v)\widetilde{}_{N_2}|,\widetilde{F_5}:=|(\chi v)\,\widetilde{}_{N_3}|,\widetilde{F_1}:=|\widetilde{v}_{N_4}|.$$

\vspace{2mm}

For $M\gg 1$, we use an $L^2_{t,x}, L^M_{t,x}, L^M_{t,x}, L^{4+}_{t,x}, L^{4-}_{t,x}, L^M_{t,x}$ H\"{o}lder inequality to deduce that:

$$K_{N_1,N_2,N_3,N_4} \lesssim \frac{N}{(N_1^*)^2} \|F_1\|_{L^2_{t,x}} \|F_2\|_{L^M_{t,x}} \|F_3\|_{L^M_{t,x}} \|F_4\|_{L^{4+}_{t,x}} \|F_5\|_{L^{4-}_{t,x}} \|F_6\|_{L^M_{t,x}}.$$
By using $(\ref{eq:LMtx})$, $(\ref{eq:L4+})$, $(\ref{eq:L4-})$, and the fact that taking absolute values in the spacetime Fourier transform leaves the $X^{s,b}$ norm invariant, it follows that the previous expression is:

$$\lesssim \frac{N}{(N_1^*)^2} \|\mathcal{D}v\|_{X^{0,0}} \|v\|_{X^{1,\frac{1}{2}+}} \|v\|_{X^{1,\frac{1}{2}+}}
\|\mathcal{D}v_{N_2}\|_{X^{0+,\frac{1}{2}-}} \|\chi v_{N_3}\|_{X^{0+,\frac{1}{2}-}} \|v_{N_4}\|_{X^{1,\frac{1}{2}+}}$$
We use frequency localization and $(\ref{eq:timelocalization})$ to deduce that this is:
$$\lesssim \frac{N}{(N_1^*)^2} \|\mathcal{D}v\|_{X^{0,0}} \|v\|_{X^{1,\frac{1}{2}+}}^2 (N_2^{0+}\|\mathcal{D}v\|_{X^{0,\frac{1}{2}+}}) \|v_{N_3}\|_{X^{0+,\frac{1}{2}+}}\|v\|_{X^{1,\frac{1}{2}+}}$$
\begin{equation}
\label{eq:BCase1}
\lesssim \frac{N}{(N_1^*)^{2-}}\|\mathcal{D}v\|_{X^{0,\frac{1}{2}+}}^2 \|v\|_{X^{1,\frac{1}{2}+}}^4 \lesssim \frac{N}{(N_1^*)^{2-}}E^1(\Phi).
\end{equation}
In the last inequality, we used Proposition \ref{Proposition 3.1}.

\vspace{2mm}

\textbf{Case 2:} $N_1=N_3^*$ or $N_1=N_4^*$.

Let us assume that:

$$N_3 \gtrsim N_2 \gtrsim N_1 \gtrsim N_4.$$
The other cases are dealt with similarly.

Arguing similarly as in Case 1, it follows that:

$$|B^{(1)}_{N_1,N_2,N_3,N_4}| \lesssim $$
$$\sum_{n_1+\cdots+n_6=0}\int_{\tau_1+\cdots+\tau_6=0} \frac{N}{(N_1^*)^2}
|\widetilde{v}(n_1,\tau_1)||\widetilde{\bar{v}}(n_2,\tau_2)||\widetilde{v}(n_3,\tau_3)|
|(\mathcal{D}\bar{v})\,\widetilde{}_{N_2}(n_4,\tau_4)||(\chi \mathcal{D}v)\,\widetilde{}_{N_3}(n_5,\tau_5)|
|\widetilde{\bar{v}}_{N_4}(n_6,\tau_6)| d\tau_j$$
We now use an $L^M_{t,x}, L^M_{t,x}, L^M_{t,x}, L^{4+}_{t,x}, L^{4-}_{t,x}, L^2_{t,x}$ H\"{o}lder inequality and argue as earlier to see that this term is:
$$\lesssim \frac{N}{(N_1^*)^2} \|v\|_{X^{1,\frac{1}{2}+}}^3 \|\mathcal{D}v_{N_2}\|_{X^{0+,\frac{1}{2}+}} \|(\chi\mathcal{D}v)_{N_3}\|_{X^{0+,\frac{1}{2}-}} \|v_{N_4}\|_{X^{0,0}}$$
\begin{equation}
\label{eq:BCase2}
\lesssim \frac{N}{(N_1^*)^{2-}} \|\mathcal{D}v\|_{X^{0,\frac{1}{2}+}}^2 \|v\|_{X^{1,\frac{1}{2}+}}^4 \lesssim \frac{N}{(N_1^*)^{2-}}E^1(\Phi).
\end{equation}
From $(\ref{eq:BCase1})$, $(\ref{eq:BCase2})$, and $(\ref{eq:E1E2ekvivalencija})$, it follows that:

\begin{equation}
\label{eq:BoundonBNj}
|B_{N_1,N_2,N_3,N_4}| \lesssim \frac{N}{(N_1^*)^{2-}} E^2(\Phi).
\end{equation}
We now use $(\ref{eq:BoundonBNj})$, sum in the $N_j$, and recall $(\ref{eq:NjlokalizacijaB})$ to deduce that:

\begin{equation}
\label{eq:BoundonB}
|B| \lesssim \frac{1}{N^{1-}} E^2(\Phi).
\end{equation}
The Lemma now follows from $(\ref{eq:BoundonA})$ and $(\ref{eq:BoundonB})$.

\end{proof}

\subsection{Further remarks on the equation.}

\begin{remark}
\label{Remark 3.0}
The quantity $\beta_0$ was chosen as in $(\ref{eq:theta0choice})$ in order to get the same decay factor in the quantities $A$ and $B$. We note that the quantity $\beta_0$ only occurred in the bound for $B$, whereas in the bound for $A$, we only used the fact that the terms corresponding to the largest two frequencies in the multiplier
$(\theta(n_1))^2 -(\theta(n_2))^2+(\theta(n_3))^2-(\theta(n_4))^2$
appear with an opposite sign. As we will see, in the non-periodic setting, the quantity $\beta_0$ will occur both in the bound for $A$ and in the bound for $B$. For details, see $(\ref{eq:BoundonAplane})$ and $(\ref{eq:BoundonBplane})$.
\end{remark}

\vspace{2mm}

\begin{remark}
\label{Remark 3.1}
Let us observe that, when $s$ is an integer, or when $\Phi$ is smooth, essentially the same bound as in Theorem \ref{Theorem 1} can be proved by using the techniques of \cite{Z}. The approach is more complicated due to the presence of the convolution potential, but the proof for the cubic NLS can be shown to work for the Hartree equation too. The reason why one uses the fact that $s$ is an integer is because one wants to use exact formulae for the (Fractional) Leibniz Rule for $D^s$. By using an exact Leibniz Rule, one sees that certain terms which are difficult to estimate are in fact equal to zero. We omit the details here.
\end{remark}

\section{The Hartree equation on $\mathbb{R}^2$.}

\subsection{Definition of the $\mathcal{D}$-operator.}

Let us now consider $(\ref{eq:Hartree})$ on $\mathbb{R}^2$. The proof of Theorem \ref{Theorem 2} will be based on the adaptation of the previous techniques to the non-periodic setting. We start by defining an appropriate \emph{upside-down I-operator}.

\vspace{2mm}

Let $N>1$ be given. Similarly as in the periodic setting, we define $\theta: \mathbb{R}^2 \rightarrow \mathbb{R}$ to be given by:

\begin{equation}
\label{eq:thetaplane}
\theta(\xi) :=
\begin{cases}
  \big(\frac{|\xi|}
  {N}\big)^s\,,   \mbox{if }|\xi| \geq 2N\\
  \,1,\,  \mbox{if } |\xi| \leq N.
\end{cases}
\end{equation}
We then extend $\theta$ to all of $\mathbb{R}^2$ so that it is radial and smooth.
Arguing similarly as in the 1D setting \cite{SoSt2}, it follows that, for all $\xi \in \mathbb{R}^2 \setminus \{0\}$, one has:

\begin{equation}
\label{eq:nablatheta}
\|\nabla \theta(\xi)\| \lesssim \frac{\theta(\xi)}{|\xi|}.
\end{equation}

\begin{equation}
\label{eq:nablasquaretheta}
\|\nabla^2 \theta(\xi)\| \lesssim \frac{\theta(\xi)}{|\xi|^2}.
\end{equation}

\vspace{2mm}

Then, if $f:\mathbb{R}^2 \rightarrow \mathbb{C}$, we define $\mathcal{D}f$ by:

\begin{equation}
\label{eq:D operatorplane}
\widehat{\mathcal{D}f}(\xi):=\theta(\xi)\hat{f}(\xi).
\end{equation}

We also observe that:

\begin{equation}
\label{eq:bound on Dplane}
\|\mathcal{D}f\|_{L^2}\lesssim_s \|f\|_{H^s}\lesssim_s N^s \|\mathcal{D}f\|_{L^2}.
\end{equation}

\vspace{2mm}

\subsection{Local-in-time bounds.}

Let $u$ denote the global solution of $(\ref{eq:Hartree})$ on $\mathbb{R}^2$.
As in the periodic setting, our goal is to estimate $\|\mathcal{D}u(t)\|_{L^2}$.

\vspace{2mm}

We start by noting:

\begin{proposition}(Local-in-time bounds for the Hartree equation on $\mathbb{R}^2$)
\label{Proposition 4.4}
There exist $\delta=\delta(s,E(\Phi),M(\Phi)),C=C(s,E(\Phi),M(\Phi))>0$, which are continuous in energy and mass, such that for all $t_0 \in \mathbb{R}$, there exists a globally defined function $v:\mathbb{R}^2 \times \mathbb{R} \rightarrow \mathbb{C}$ such that:

\begin{equation}
\label{eq:properties of v12}
v|_{[t_0,t_0+\delta]}=u|_{[t_0,t_0+\delta]}.
\end{equation}

\begin{equation}
\label{eq:properties of v22}
\|v\|_{X^{1,\frac{1}{2}+}}\leq C(s,E(\Phi),M(\Phi)).
\end{equation}

\begin{equation}
\label{eq:properties of v32}
\|\mathcal{D}v\|_{X^{0,\frac{1}{2}+}}\leq C(s,E(\Phi),M(\Phi)) \|\mathcal{D}u(t_0)\|_{L^2}.
\end{equation}

\end{proposition}

\vspace{2mm}

Furthermore, we have:

\begin{lemma}
\label{Proposition 4.5}
If $u$ satisfies:

\begin{equation}
\begin{cases}
i u_t + \Delta u=(V*|u|^2)u,\\
u(x,0)=\Phi(x).
\end{cases}
\end{equation}

and if the sequence $(u^{(n)})$ satisfies:

\begin{equation}
\begin{cases}
i u^{(n)}_t + \Delta u^{(n)}=(V*|u^{(n)}|^2)u^{(n)},\\
u^{(n)}(x,0)=\Phi_n(x).
\end{cases}
\end{equation}

where $\Phi_n \in C^{\infty}(\mathbb{R}^2)$ and $\Phi_n \stackrel{H^s}{\longrightarrow} \Phi$, then, one has for all $t$:

$$u^{(n)}(t)\stackrel{H^s}{\longrightarrow} u(t).$$

\end{lemma}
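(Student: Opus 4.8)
The plan is to deduce this continuous-dependence statement from the local-in-time bounds of Proposition \ref{Proposition 4.4} together with the continuity of the conserved quantities on $H^1$, exactly as was done on $S^1$ and $\mathbb{R}$ in \cite{SoSt1,SoSt2}.

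\textbf{Step 1 (uniform local interval).} Since $s>1$ we have $H^s(\mathbb{R}^2)\hookrightarrow H^1(\mathbb{R}^2)$, so $\Phi_n\to\Phi$ in $H^1$. The estimate (\ref{eq:energycontinuity}) is valid on $\mathbb{R}^2$ as well, since it uses only $V\in L^1$, Young's inequality, and the Sobolev embedding $H^1\hookrightarrow L^q$ for $2\le q<\infty$, all of which are available on the plane; hence $M$ and $E$ are continuous on $H^1(\mathbb{R}^2)$, so $M(\Phi_n)\to M(\Phi)$, $E(\Phi_n)\to E(\Phi)$, and in particular $\sup_n M(\Phi_n)$, $\sup_n E(\Phi_n)<\infty$. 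Because the length $\delta$ and the constant $C$ in Proposition \ref{Proposition 4.4} depend only on $s$ and on continuous functions of mass and energy, there exist $\delta_0>0$ and $C_0<\infty$, independent of $n$, which serve simultaneously for $\Phi$ and for all $\Phi_n$ on every interval $[t_0,t_0+\delta_0]$.

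\textbf{Step 2 (difference estimate on one step).} Fix $t_0$ and let $v,v^{(n)}$ be the extensions furnished by Proposition \ref{Proposition 4.4} on $[t_0,t_0+\delta_0]$, so that $v=u$ and $v^{(n)}=u^{(n)}$ there. Put $w=v^{(n)}-v$. Duhamel's formula gives, on $[t_0,t_0+\delta_0]$,
$$w(t)=e^{i(t-t_0)\Delta}\big(\Phi_n-\Phi\big)-i\int_{t_0}^{t}e^{i(t-t')\Delta}\Big[(V*|v^{(n)}|^2)v^{(n)}-(V*|v|^2)v\Big]\,dt',$$
and the bracket is a sum of terms trilinear in $v,v^{(n)},w$, each carrying exactly one factor of $w$. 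The multilinear $X^{s,b}$ estimates that prove Proposition \ref{Proposition 4.4} are multilinear, hence apply verbatim with different functions in the three slots; combined with the gain in the interval length that makes the fixed-point argument for Proposition \ref{Proposition 4.4} close on an interval of size $\delta_0$, and with the time-localization estimate (\ref{eq:timelocalization}), they yield
$$\|w\|_{X^{s,\frac12+}([t_0,t_0+\delta_0])}\ \lesssim\ \|\Phi_n-\Phi\|_{H^s}+\tfrac12\,\|w\|_{X^{s,\frac12+}([t_0,t_0+\delta_0])}.$$
Absorbing the last term and using that $X^{s,\frac12+}$ controls $L^\infty_tH^s_x$ (the $H^s$-analogue of (\ref{eq:SobolevembeddingXsb2})), we obtain $\sup_{t\in[t_0,t_0+\delta_0]}\|u^{(n)}(t)-u(t)\|_{H^s}\le C_1\,\|u^{(n)}(t_0)-u(t_0)\|_{H^s}$, with $C_1$ independent of $n$ and $t_0$.

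\textbf{Step 3 (iteration and conclusion).} Mass and energy are conserved along both flows, so the hypotheses of Proposition \ref{Proposition 4.4} hold at every time with the \emph{same} $\delta_0$, and Step 2 may be applied successively on $[0,\delta_0]$, $[\delta_0,2\delta_0]$, and so on. By induction, $\|u^{(n)}(t)-u(t)\|_{H^s}\le C_1^{\,k}\,\|\Phi_n-\Phi\|_{H^s}$ for $t\in[0,k\delta_0]$. For a fixed $t\in\mathbb{R}$, choosing $k=\lceil |t|/\delta_0\rceil$ (finite) and invoking time-reversibility for $t<0$, the right-hand side tends to $0$, so $u^{(n)}(t)\to u(t)$ in $H^s$ for every $t$. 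The only genuinely technical point is the trilinear estimate in Step 2 — controlling $(V*(f\bar g))h$ in $X^{s,-\frac12+}$ by the $X^{s,\frac12+}$ norms of $f,g,h$, with a positive power of the interval length in front so that the iteration closes — but this is precisely (the off-diagonal form of) the estimate established in the Appendix for Proposition \ref{Proposition 4.4}, with no $\mathcal D$-weight present and with $V\in L^1$ entering only through $\widehat V\in L^\infty$, so no new input is required.
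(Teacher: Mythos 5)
Your overall architecture — uniform local interval from Proposition \ref{Proposition 4.4}, a Duhamel difference estimate on one subinterval, and iteration in finitely many steps — is the right one, and it is the same scheme the paper defers to \cite{SoSt1,SoSt2} for Proposition \ref{Proposition 3.2} (the paper gives no details of its own for either Proposition \ref{Proposition 3.2} or Lemma \ref{Proposition 4.5}). However, the self-improving bound in your Step 2, as written, does not close with a coefficient $\frac12$ that is independent of $n$ and $t_0$, and the claim that $C_1$ is $t_0$-independent is overstated.

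The issue is that the trilinear estimate from the Appendix that backs Proposition \ref{Proposition 4.4} is of the refined form
$$\|(V*(f\bar g))h\|_{X^{s,b-1}}\ \lesssim\ \delta^{\rho_0}\bigl(\|f\|_{X^{s,b}}\|g\|_{X^{1,b}}\|h\|_{X^{1,b}}+\|f\|_{X^{1,b}}\|g\|_{X^{s,b}}\|h\|_{X^{1,b}}+\|f\|_{X^{1,b}}\|g\|_{X^{1,b}}\|h\|_{X^{s,b}}\bigr),$$
i.e. the $s$ derivatives land on exactly one (the highest-frequency) factor. When you substitute one factor of $w=v^{(n)}-v$ and two factors drawn from $\{v,v^{(n)}\}$, you get two qualitatively different kinds of terms. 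If the $s$-weight falls on $w$, the coefficient of $\|w\|_{X^{s,b}}$ is $\delta^{\rho_0}$ times a quantity controlled by $\|v\|_{X^{1,b}},\|v^{(n)}\|_{X^{1,b}}\lesssim C(E,M)$, which is uniform in $n,t_0$ and can be made $\le\tfrac12$ by the same $\delta_0$ as in the local theory. But if the $s$-weight falls on $v$ or $v^{(n)}$, the resulting term is $\delta^{\rho_0}\,\|v^{\ast}\|_{X^{s,b}}\,C(E,M)\,\|w\|_{X^{1,b}}$, whose coefficient involves $\|v\|_{X^{s,b}},\|v^{(n)}\|_{X^{s,b}}$, i.e. the $H^s$ size of the data at time $t_0$. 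This is not uniformly small, nor is it uniform in $t_0$; replacing $\|w\|_{X^{1,b}}\le\|w\|_{X^{s,b}}$ and trying to absorb would require shrinking $\delta$ with the (growing) $H^s$ norm, contradicting the "same $\delta_0$" you invoke in Step 3.

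The standard repair — and presumably what is done in \cite{SoSt1} — is a two-level argument: first run the difference estimate at the $X^{1,b}$ level, where the absorption is genuinely uniform because the fixed-point contraction in the Appendix is established in $X^{1,b}$ with $\delta$ depending only on mass and energy; this yields $\|w\|_{X^{1,b}}\lesssim\|u^{(n)}(t_0)-u(t_0)\|_{H^1}$. Then run the $X^{s,b}$ estimate, absorb only the $\|w\|_{X^{s,b}}$ term (the one whose coefficient is $C(E,M)$), and feed the already-established $X^{1,b}$ bound into the remaining cross terms. This gives $\|w\|_{X^{s,b}}\le C(t_0)\|u^{(n)}(t_0)-u(t_0)\|_{H^s}$ with $C(t_0)$ finite, uniform in $n$, but \emph{not} in $t_0$ (it grows with $\|u(t_0)\|_{H^s}$ and $\sup_n\|u^{(n)}(t_0)\|_{H^s}$). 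This is enough: for a fixed $t$ you take the finite product of the step constants over $k=\lceil|t|/\delta_0\rceil$ intervals, which tends to $0$ as $\|\Phi_n-\Phi\|_{H^s}\to0$. So your conclusion is correct, but Step 2 needs this two-tier absorption, and the claimed $t_0$-independence of $C_1$ should be dropped.
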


\vspace{2mm}

The proofs of Propositions \ref{Proposition 4.4} and \ref{Proposition 4.5} are analogous to the proofs of Propositions \ref{Proposition 3.1} and \ref{Proposition 3.2}. The main point is that all the auxiliary estimates still hold in the non-periodic setting. As before, we can assume without loss of generality that $\Phi \in \mathcal{S}(\mathbb{R}^2)$. We omit the details.

\vspace{2mm}

\subsection{A higher modified energy and an iteration bound.}
\label{HigherEnergyPlane}

As in the periodic setting, we will apply the method of \emph{higher modified energies}. We will see that we can obtain better estimates in the non-periodic setting due to the fact that we can apply the \emph{improved Strichartz estimate} (Proposition \ref{Proposition 2.3}), and the \emph{angular improved Strichartz estimate} (Proposition \ref{AngleImprovedStrichartz}).

We start by defining:

$$E^1(u(t)):=\|\mathcal{D}u(t)\|_{L^2}^2.$$

As before, we obtain that for some $c \in \mathbb{R}$, one has:

$$\frac{d}{dt} E^1(u(t) =i c \int_{\xi_1+\xi_2+\xi_3+\xi_4=0} \big( (\theta(\xi_1))^2-(\theta(\xi_2))^2+(\theta(\xi_3))^2-(\theta(\xi_4))^2 \big)$$
\begin{equation}
\label{eq:E1primeplane}
\widehat{V}(\xi_3+\xi_4) \widehat{u}(\xi_1) \widehat{\bar{u}}(\xi_2) \widehat{u}(\xi_3) \widehat{\bar{u}}(\xi_4) d\xi_j.
\end{equation}

As in the previous works, we consider the \emph{higher modified energy}:

\begin{equation}
\label{eq:E2plane}
E^2(u):=E^1(u)+\lambda_4(M_4;u).
\end{equation}
The quantity $M_4$ will be determined soon.

For a fixed multiplier $M_4$, we obtain:

$$\frac{d}{dt} \lambda_4(M_4;u)=$$
\vspace{2mm}
$$-i\lambda_4(M_4(|\xi_1|^2-|\xi_2|^2+|\xi_3|^2-|\xi_4|^2);u)$$
\vspace{2mm}
$$-i\sum_{\xi_1+\xi_2+\xi_3+\xi_4+\xi_5+\xi_6=0} \big[M_4(\xi_{123},\xi_4,\xi_5,\xi_6)\widehat{V}(\xi_1+\xi_2)$$
\vspace{2mm}
$$-M_4(\xi_1,\xi_{234},\xi_5,\xi_6)\widehat{V}(\xi_2+\xi_3)+M_4(\xi_1,\xi_2,\xi_{345},\xi_6)\widehat{V}(\xi_3+\xi_4)$$
\begin{equation}
\label{eq:lambda4M4primeplane}
-M_4(\xi_1,\xi_2,\xi_3,\xi_{456})\widehat{V}(\xi_4+\xi_5)\big]\widehat{u}(\xi_1)\widehat{\bar{u}}(\xi_2)\widehat{u}(\xi_3)
\widehat{\bar{u}}(\xi_4)\widehat{u}(\xi_5)\widehat{\bar{u}}(\xi_6).
\end{equation}

As in the periodic setting, we can compute that for $(\xi_1,\xi_2,\xi_3,\xi_4) \in \Gamma_4$, one has:

\begin{equation}
\label{eq:planedenominator}
|\xi_1|^2-|\xi_2|^2+|\xi_3|^2-|\xi_4|^2=2 \xi_{12} \cdot \xi_{14}.
\end{equation}

As before, we decompose:

$$\Gamma_4 = \Omega_{nr} \sqcup \Omega_r.$$
Here, the set $\Omega_{nr}$ of \emph{non-resonant} frequencies is defined by:

\begin{equation}
\label{eq:nonresonantplane}
\Omega_{nr}:= \{(\xi_1,\xi_2,\xi_3,\xi_4) \in \Gamma_4;\xi_{12},\xi_{14} \neq 0, |cos \angle(\xi_{12},\xi_{14})| > \beta_0\}
\end{equation}
and the set $\Omega_r$ of \emph{resonant} frequencies $\Omega_r$ is defined to be its complement in $\Gamma_4$.
\vspace{2mm}

We now define the multiplier $M_4$ by:

\begin{equation}
\label{eq:M4plane}
M_4(\xi_1,\xi_2,\xi_3,\xi_4) :=
\begin{cases}
  c \frac{((\theta(\xi_1))^2-(\theta(\xi_2))^2+(\theta(\xi_3))^2-(\theta(\xi_4))^2)}{|\xi_1|^2-|\xi_2|^2+|\xi_3|^2-|\xi_4|^2}
  \widehat{V}(\xi_3+\xi_4)\,, \mbox{if }(\xi_1,\xi_2,\xi_3,\xi_4) \in \Omega_{nr}\\
  \,0,\,  \mbox{if } (\xi_1,\xi_2,\xi_3,\xi_4) \in \Omega_r.
\end{cases}
\end{equation}

Let us now define the multiplier $M_6$ on $\Gamma_6$ by:

\begin{equation}
\label{eq:M6plane}
M_6(\xi_1,\xi_2,\xi_3,\xi_4,\xi_5,\xi_6):=M_4(\xi_{123},\xi_4,\xi_5,\xi_6)\widehat{V}(\xi_1+\xi_2)-M_4(\xi_1,\xi_{234},\xi_5,\xi_6)\widehat{V}(\xi_2+\xi_3)+
\end{equation}
$$+M_4(\xi_1,\xi_2,\xi_{345},\xi_6)\widehat{V}(\xi_3+\xi_4)-M_4(\xi_1,\xi_2,\xi_3,\xi_{456})\widehat{V}(\xi_4+\xi_5)$$
We now use $(\ref{eq:E1primeplane})$ and $(\ref{eq:lambda4M4primeplane})$, and the construction of $M_4$ to deduce that
\footnote{As in the periodic setting, we recall that $(\theta(\xi_1))^2-(\theta(\xi_2))^2+(\theta(\xi_3))^2-(\theta(\xi_4))^2=0$, whenever $\xi_{12}=0$ or $\xi_{14}=0$, hence the corresponding terms again don't contribute to the quadrilinear term. Therefore, we don't have to worry about defining the quantity $\cos(0,\cdot)$.}:

$$\frac{d}{dt} E^2(u)=$$
$$\int_{\xi_1+\xi_2+\xi_3+\xi_4=0, |cos \angle(\xi_{12},\xi_{14})| \leq \beta_0} \big((\theta(\xi_1))^2-(\theta(\xi_2))^2+(\theta(\xi_3))^2-(\theta(\xi_4))^2 \big) \widehat{V}(\xi_3+\xi_4) \widehat{u}(\xi_1) \widehat{\bar{u}}(\xi_2) \widehat{u}(\xi_3) \widehat{\bar{u}}(\xi_4) d\xi_j+$$
$$+\int_{\xi_1+\xi_2+\xi_3+\xi_4+\xi_5+\xi_6=0} M_6(\xi_1,\xi_2,\xi_3,\xi_4,\xi_5,\xi_6) \widehat{u}(\xi_1) \widehat{\bar{u}}(\xi_2) \widehat{u}(\xi_3) \widehat{\bar{u}}(\xi_4) \widehat{u}(\xi_5) \widehat{\bar{u}}(\xi_6)$$
\begin{equation}
\label{eq:I+IIplane}
=:I + II.
\end{equation}

As before, we need to prove pointwise bounds on the multiplier $M_4$. Given $(\xi_1,\xi_2,\xi_3,\xi_4) \in \Gamma_4$, we dyadically localize the frequencies, i.e, we find dyadic integers $N_j$ s.t. $|\xi_j| \sim N_j$. We then order the $N_j$'s to obtain: $N_1^* \geq N_2^* \geq N_3^* \geq N_4^*$. We again abuse notation by writing $\theta(N_j^*)$ for $\theta(N_j^*,0)$. One then has:

\begin{lemma}
\label{planepointwisebound}
With notation as above, the following bound holds:
\begin{equation}
\label{eq:M4boundplane}
M_4= O \big( \frac{1}{\beta_0} \frac{1}{(N_1^*)^2} \theta(N_1^*) \theta(N_2^*) \big).
\end{equation}
\end{lemma}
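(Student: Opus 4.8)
The plan is to transcribe the proof of Lemma~\ref{toruspointwisebound}, replacing the discrete Mean Value Theorem and the monotonicity of $\theta^2/|n|$ by their smooth analogues, which are available here because on $\mathbb{R}^2$ the multiplier $\theta$ has been extended to a radial $C^\infty$ function satisfying $(\ref{eq:nablatheta})$ and $(\ref{eq:nablasquaretheta})$. First I would record two preliminary facts about $g:=\theta^2$. Since $\nabla g=2\theta\,\nabla\theta$ and $\nabla^2 g=2\,\nabla\theta\otimes\nabla\theta+2\theta\,\nabla^2\theta$, estimates $(\ref{eq:nablatheta})$ and $(\ref{eq:nablasquaretheta})$ give
\begin{equation*}
\|\nabla g(\xi)\|\lesssim\frac{g(\xi)}{|\xi|},\qquad \|\nabla^2 g(\xi)\|\lesssim\frac{g(\xi)}{|\xi|^2}\qquad(\xi\neq 0).
\end{equation*}
Moreover $\theta\equiv 1$ on $\{|\xi|\le N\}$, so $\nabla g$ vanishes there, and for $|\xi|\ge N$ the quantity $g(\xi)/|\xi|$ is, up to an absolute constant, nondecreasing in $|\xi|$ (it equals $|\xi|^{2s-1}/N^{2s}$ for $|\xi|\ge 2N$, with $s>1$, and is $\sim 1/N$ on the transition annulus). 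Hence $\sup_{|z|\lesssim R}\|\nabla g(z)\|\lesssim g(\xi_*)/|\xi_*|$ for any $\xi_*$ with $N\lesssim|\xi_*|\sim R$; this plays the role of $(\ref{eq:Case2term2})$.

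Next I would reduce matters to an estimate on a second difference of $g$. From the definition $(\ref{eq:M4plane})$, the constraint $|\cos\angle(\xi_{12},\xi_{14})|>\beta_0$ in $(\ref{eq:nonresonantplane})$, and $|\widehat V|\lesssim\|V\|_{L^1}\lesssim 1$, one obtains exactly as in $(\ref{eq:M4boundequivalent})$ that
\begin{equation*}
|M_4|\lesssim\frac{1}{\beta_0}\,\frac{\big|(\theta(\xi_1))^2-(\theta(\xi_2))^2+(\theta(\xi_3))^2-(\theta(\xi_4))^2\big|}{|\xi_{12}|\,|\xi_{14}|}.
\end{equation*}
Assume WLOG $|\xi_1|\ge|\xi_2|,|\xi_3|,|\xi_4|$ and $|\xi_{12}|\ge|\xi_{14}|$; the numerator vanishes unless $|\xi_1|>N$, so $|\xi_1|\sim N_1^*$. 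Using $\xi_1+\xi_2+\xi_3+\xi_4=0$ together with the evenness of $\theta$ one writes $\xi_2=\xi_{12}-\xi_1$, $\xi_4=\xi_{14}-\xi_1$, $\xi_3=\xi_1-\xi_{12}-\xi_{14}$, so the numerator becomes
\begin{equation*}
g(\xi_1)-g(\xi_1-\xi_{12})-g(\xi_1-\xi_{14})+g(\xi_1-\xi_{12}-\xi_{14}),
\end{equation*}
i.e.\ the second difference of $g$ based at $\xi_1$ with increments $-\xi_{12}$ and $-\xi_{14}$.

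Then I would run the same three-case analysis as in Lemma~\ref{toruspointwisebound}. In Case~1 ($|\xi_1|\sim|\xi_{12}|\sim|\xi_{14}|$) the numerator is trivially $O(g(\xi_1))$ (each $|\xi_j|\le|\xi_1|$ and $\theta$ is increasing in $|\xi|$) while $|\xi_{12}||\xi_{14}|\sim|\xi_1|^2$. In Case~2 ($|\xi_1|\sim|\xi_{12}|\gg|\xi_{14}|$) one groups the numerator as $\big(g(\xi_1)-g(\xi_1-\xi_{14})\big)-\big(g(\xi_1-\xi_{12})-g(\xi_1-\xi_{12}-\xi_{14})\big)$ and applies the ordinary Mean Value Theorem to each bracket; the increment is the short vector $\xi_{14}$, and by the preliminary facts each bracket is $O\!\big(|\xi_{14}|\,g(\xi_1)/|\xi_1|\big)$ (for the second bracket one uses that $\nabla g=0$ below scale $N$ and the near-monotonicity of $g/|\xi|$ above it), so, dividing by $|\xi_{12}||\xi_{14}|\sim|\xi_1||\xi_{14}|$, $|M_4|\lesssim\beta_0^{-1}g(\xi_1)/|\xi_1|^2$. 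In Case~3 ($|\xi_1|\gg|\xi_{12}|,|\xi_{14}|$) the numerator is already a second difference, and I would apply Proposition~\ref{Proposition dva.pet} with $f=g$, $x=\xi_1-\xi_{12}-\xi_{14}$ (note $|x|\sim|\xi_1|$, so the hypothesis $|\xi_{12}|,|\xi_{14}|\ll|x|$ holds), $\eta=\xi_{14}$, $\mu=\xi_{12}$, getting a bound $\lesssim|\xi_{12}||\xi_{14}|\,\|\nabla^2 g(x)\|\lesssim|\xi_{12}||\xi_{14}|\,g(\xi_1)/|\xi_1|^2$, hence again $|M_4|\lesssim\beta_0^{-1}g(\xi_1)/|\xi_1|^2$. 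In every case $|M_4|\lesssim\beta_0^{-1}(N_1^*)^{-2}\theta(N_1^*)^2$, and since $M_4$ is supported where $\xi_1+\cdots+\xi_4=0$ forces $N_1^*\sim N_2^*$, we have $\theta(N_1^*)^2\sim\theta(N_1^*)\theta(N_2^*)$, which is $(\ref{eq:M4boundplane})$.

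The only point requiring more care than in the torus case is Case~2: one must use that $\theta$ is constant below the scale $N$ — so that the low-frequency portions of the Mean-Value segments carry no variation of $g$ — together with the near-monotonicity of $g(\xi)/|\xi|$ for $|\xi|\gtrsim N$; this is the continuous replacement for the supremum bound $(\ref{eq:Case2term2})$ in the proof of Lemma~\ref{toruspointwisebound}. Everything else is a verbatim transcription of that argument, with Proposition~\ref{Proposition dva.pet} supplying the smooth Double Mean Value Theorem in place of its discrete analogue.
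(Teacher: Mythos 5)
Your proof is correct and implements exactly what the paper intends: the paper omits this proof, stating only that it is analogous to that of Lemma~\ref{toruspointwisebound}, and your write-up is a faithful transcription of that torus argument to $\mathbb{R}^2$, substituting the smooth derivative bounds $(\ref{eq:nablatheta})$ and $(\ref{eq:nablasquaretheta})$ for the discrete mean value estimate and the monotonicity of $\theta(n)^2/|n|$. In particular your grouping of the numerator in Case~2 (with the observation that $\nabla(\theta^2)$ vanishes below scale $N$ and $\theta(\xi)^2/|\xi|$ is essentially nondecreasing above it) and your invocation of Proposition~\ref{Proposition dva.pet} in Case~3 match the torus proof step for step.
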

The proof of Lemma \ref{planepointwisebound} is analogous to the proof of Lemma \ref{toruspointwisebound} and it will be omitted.

In the non-periodic setting, we will see that we can choose a larger $\beta_0$ from which we can get a better bound. Let us choose:

\begin{equation}
\label{eq:theta0choiceplane}
\beta_0 \sim \frac{1}{N^{\alpha}}.
\end{equation}

Here, we take $\alpha \in (0,1)$. We determine $\alpha$ precisely later (see $(\ref{eq:alphachoice})$). For now, we notice:

\begin{equation}
\label{eq:theta0planeauxiliary}
\beta_0 \geq \frac{1}{N}
\end{equation}

We observe that Lemma \ref{planepointwisebound} and $(\ref{eq:theta0planeauxiliary})$ imply:

\begin{equation}
\label{eq:planepointwiseboundtheta0}
M_4 = O \big( \frac{N}{(N_1^*)^2}\theta(N_1^*)\theta(N_2^*) \big).
\end{equation}

The bound from $(\ref{eq:planepointwiseboundtheta0})$ allows us to deduce the equivalence of $E^1$ and $E^2$.
We have the following bound:

\begin{proposition}
\label{E1E2equivalenceplane}
For any $t \in \mathbb{R}$, one has that:
\begin{equation}
\label{eq:E1E2ekvivalencijaravnina}
E^1(u(t)) \sim E^2(u(t))
\end{equation}
Here, the constant is independent of $t$ and $N$, as long as $N$ is sufficiently large.
\end{proposition}
The proof of Proposition \ref{E1E2equivalenceplane} is analogous to the proof of Proposition \ref{E1E2equivalencetorus}. We omit the details.

Let $\delta>0,v$ be as in Proposition \ref{Proposition 4.4}. For $t_0 \in \mathbb{R}$, we are interested in estimating:

$$E^2(u(t_0+\delta))-E^2(u(t_0))= \int_{t_0}^{t_0+\delta} \frac{d}{dt}
E^2(u(t)) dt = \int_{t_0}^{t_0+\delta} \frac{d}{dt}
E^2(v(t)) dt.$$

\vspace{2mm}

The iteration bound that we will show is:

\begin{lemma}
\label{Bigstarplane}
For all $t_0 \in \mathbb{R}$, one has:
$$\big| E^2(u(t_0+\delta))-E^2(u(t_0)) \big| \lesssim \frac{1}{N^{\frac{7}{4}-}}E^2(u(t_0)).$$
\end{lemma}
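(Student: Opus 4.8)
The plan is to follow the same template as the proof of Lemma \ref{Bigstartorus}, but to exploit the stronger dispersion on $\mathbb{R}^2$ at each step. As before, assume WLOG $t_0 = 0$, let $v$ be the function from Proposition \ref{Proposition 4.4}, let $\chi = \chi_{[0,\delta]}$, and use $(\ref{eq:I+IIplane})$ to write
$$E^2(u(\delta)) - E^2(u(0)) = \int_0^\delta I\,dt + \int_0^\delta II\,dt =: A + B.$$
The goal is to show $|A|, |B| \lesssim \frac{1}{N^{7/4-}} E^2(\Phi)$, after which the Lemma follows from Proposition \ref{E1E2equivalenceplane}. The parameter $\alpha \in (0,1)$ from $(\ref{eq:theta0choiceplane})$ will be chosen at the end to balance the two bounds, and I expect the optimization to produce $\alpha = \tfrac{3}{4}$ and the exponent $\tfrac{7}{4}$ (consistent with the claimed $(1+|t|)^{\frac{4}{7}s+}$ in Theorem \ref{Theorem 2}, since $T \sim N^{7/4-}$ and $\|u(T)\|_{H^s} \lesssim N^s\|\Phi\|_{H^s}$ gives $N^s \sim T^{\frac47 s+}$).

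For the quadrilinear term $A$: dyadically localize $|\xi_j| \sim N_j$, order them $N_1^* \geq \cdots \geq N_4^*$, and note $N_1^* \sim N_2^* \gtrsim N$ with $|\xi_1|>N$. The multiplier obeys the analogue of $(\ref{eq:multiplierA})$, namely the quadrilinear symbol is $O\big(|\xi_{12}||\xi_{14}|\frac{\theta(N_1^*)\theta(N_2^*)}{(N_1^*)^2}\big)$, but now we must keep track of the constraint $|\cos\angle(\xi_{12},\xi_{14})| \leq \beta_0$. The key new input is that the pair $(\xi_1,\xi_4)$ (or whichever two frequencies are paired in $\xi_{14}$) can be grouped so that the angular restriction on $\xi_{12}, \xi_{14}$ translates, via $\xi_{12} = -\xi_{34}$ and the geometry, into an angular restriction allowing the use of Proposition \ref{AngleImprovedStrichartz} on one of the bilinear factors, gaining a factor $\beta_0^{1/2}$. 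Combined with the improved Strichartz estimate (Proposition \ref{chiImprovedStrichartz}) on the high-high interaction $\times$ low pairing, which gains $\frac{N_2^{*\,1/2}}{N_1^{*\,1/2-}}$-type factors, and a Hölder scheme in $L^2_{t,x}$ and $L^{2+}_{t,x}$ factors built from $(\ref{eq:L4plane})$, $(\ref{eq:L2+})$, $(\ref{eq:ImprovedStrichartzchi})$, $(\ref{eq:angleStrichartz})$, one obtains
$$|A_{N_1,N_2,N_3,N_4}| \lesssim \frac{\beta_0^{1/2}}{(N_1^*)^{1-}} E^2(\Phi) \sim \frac{1}{N^{(1+\alpha/2)-}} (N_1^*)^{0-}\, E^2(\Phi)$$
(schematically — the exact powers of $N_1^*$ and the extra room for summation must be tracked), which sums in the $N_j$ to $|A| \lesssim \frac{1}{N^{(1+\alpha/2)-}} E^2(\Phi)$; this is referenced later as $(\ref{eq:BoundonAplane})$.

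For the sextilinear term $B$, I would again treat only the first summand $B^{(1)}$ coming from $M_4(\xi_{123},\xi_4,\xi_5,\xi_6)\widehat V(\xi_1+\xi_2)$, the rest being symmetric. Localize $|\xi_{123}| \sim N_1$, $|\xi_4| \sim N_2$, $|\xi_5| \sim N_3$, $|\xi_6| \sim N_4$, reorder, and use the pointwise bound $(\ref{eq:planepointwiseboundtheta0})$, i.e. $M_4 = O\big(\frac{N}{(N_1^*)^2}\theta(N_1^*)\theta(N_2^*)\big)$ — but here we should instead retain the sharper $M_4 = O\big(\frac{1}{\beta_0}\frac{1}{(N_1^*)^2}\theta(N_1^*)\theta(N_2^*)\big)$ from Lemma \ref{planepointwisebound} and use the improved bilinear Strichartz estimates to compensate the $\frac{1}{\beta_0}$ loss. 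Splitting into Case 1 ($N_1 = N_1^*$ or $N_2^*$) and Case 2 ($N_1 = N_3^*$ or $N_4^*$) as in the torus proof, and distributing the six factors among $L^2_{t,x}$, $L^{2+}_{t,x}$, $L^{4\pm}_{t,x}$, $L^M_{t,x}$ norms, where one of the $L^2$ factors is a product estimated by Proposition \ref{chiImprovedStrichartz} or Corollary \ref{Corollary 2.4} (gaining $\frac{N_{\mathrm{small}}^{1/2}}{N_{\mathrm{large}}^{1/2-}}$), one should arrive at a bound of the form $|B_{N_1,\ldots,N_4}| \lesssim \frac{1}{\beta_0}\frac{1}{(N_1^*)^{2-}}\big(\tfrac{N_{\mathrm{small}}}{N_{\mathrm{large}}}\big)^{1/2} N^{0+}\, E^2(\Phi)$; after summation $|B| \lesssim N^{\alpha}\frac{1}{N^{2-}} E^2(\Phi) = \frac{1}{N^{(2-\alpha)-}}E^2(\Phi)$, referenced as $(\ref{eq:BoundonBplane})$. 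Finally, choosing $\alpha$ so that $1 + \alpha/2 = 2 - \alpha$, i.e. $\alpha = \tfrac23$... — here I would reexamine: the target exponent $\tfrac74$ forces $\alpha = \tfrac34$, so the balancing must actually come from $|A| \lesssim N^{-(1+\alpha)/ \cdot}$ with a different power, and I would recompute the $A$-estimate more carefully; in any case $\alpha$ is fixed by $(\ref{eq:alphachoice})$ to make both exponents equal $\tfrac74-$.

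The main obstacle I anticipate is the estimate of $A$: unlike the torus case, where the angular parameter $\beta_0$ played no role in bounding the quadrilinear term (Remark \ref{Remark 3.0}) and one simply used the sign cancellation among the top two $\theta$'s, here we genuinely need the angular-improved Strichartz estimate (Proposition \ref{AngleImprovedStrichartz}) to extract the $\beta_0^{1/2}$ gain from the resonant region $|\cos\angle(\xi_{12},\xi_{14})| \leq \beta_0$, and correctly setting up the bilinear grouping so that the angular restriction on $(\xi_{12},\xi_{14})$ is inherited by a factor to which Proposition \ref{AngleImprovedStrichartz} applies — while simultaneously handling the rough time cutoff $\chi$ via the $a+b$ decomposition $(\ref{eq:chi=a+b})$ — is the delicate part. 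The sextilinear term $B$, by contrast, is essentially a more careful bookkeeping of the torus argument with the improved Strichartz estimate inserted to absorb the $\beta_0^{-1}$ in $M_4$.
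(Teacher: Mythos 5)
Your high-level roadmap is right: for the quadrilinear term $A$ you want the angular-improved Strichartz estimate (Proposition \ref{AngleImprovedStrichartz}) to extract a $\beta_0$-gain from the resonant region, for the sextilinear term $B$ you want to keep the $\beta_0^{-1}$ from Lemma \ref{planepointwisebound} and absorb it with the bilinear improved Strichartz estimates, and at the end you balance via $\alpha$. But there is a genuine gap you do not close, and you half-flag it yourself. When you transfer the constraint $|\cos\angle(\xi_{12},\xi_{14})|\le\beta_0$ to a pair of the original frequencies, say $(\xi_1,\xi_3)$, you do \emph{not} get $|\cos\angle(\xi_1,\xi_3)|\lesssim\beta_0$. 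The paper's estimate $(\ref{eq:cosinebound})$ gives $|\cos\angle(\xi_1,\xi_3)|\lesssim \beta_0 + N_4/N_3$, because $\xi_1$ and $\xi_3$ differ from $\xi_{14}$ and $-\xi_{34}$ by angles of size $O(N_4/N_1)$ and $O(N_4/N_3)$ respectively. So Proposition \ref{AngleImprovedStrichartz} only yields $(\beta_0+N_4/N_3)^{1/2}$, which is useless when $N_3$ is small. This forces a second interpolation parameter $\gamma$: in the regime $N_3\lesssim (N_1^*)^\gamma$ one abandons the angular gain and instead uses the plain improved Strichartz estimate (Proposition \ref{Proposition 2.3}) on the high-high pair, paying $N_1^{\gamma/2}$, while in the regime $N_3\gtrsim (N_1^*)^\gamma$ the angular estimate is effective and the $N_4/N_3$ correction is controlled by $N_1^{-\gamma}$. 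This, together with the separate treatment of the subcase $N_4\sim N_1$ by a flat $L^4$ scheme, produces the three-term bound $(\ref{eq:BoundonAplane})$, whose equalization forces $\gamma=1/2$ and hence the exponent $7/4$.

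Your schematic exponents confirm the gap: you arrive at $|A|\lesssim N^{-(1+\alpha/2)-}$ and $|B|\lesssim N^{-(2-\alpha)-}$, both of which are a full power of $N^{1/2}$ weaker than the paper's $|A|\lesssim (\beta_0^{1/2}N^{-3/2-}+N^{-(7/4)-})$ and $|B|\lesssim N^{-(5/2-\alpha)-}$; there is no choice of $\alpha\in(0,1)$ that makes your two bounds meet at $N^{-7/4}$, which is exactly the inconsistency you notice at the end. The missing $N^{1/2}$ in $A$ comes from the two-parameter $(\gamma,\alpha)$ optimization just described, and the missing $N^{1/2}$ in $B$ comes from pushing the frequency-localization of $v_{N_3}$ (or $v_{N_4}$) into a $\|v\|_{X^{1/2,1/2+}}$ (or $\|v\|_{X^{1,1/2+}}$) factor after applying Proposition \ref{chiImprovedStrichartz}, which you gesture at but do not carry out. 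So the proposal identifies the right tools but does not supply the two new ideas — the $\beta_0+N_4/N_3$ angular transfer and the auxiliary $\gamma$-splitting — that make the argument close at the claimed rate.
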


\vspace{2mm}

Arguing as in the case of $(\ref{eq:Hartree})$ on $\mathbb{T}^2$, Theorem \ref{Theorem 2} will follow from Lemma \ref{Bigstarplane}.

We now prove Lemma \ref{Bigstarplane}

\begin{proof}

It suffices to consider the case when $t_0=0$. As on $\mathbb{T}^2$, we compute that $E^2(u(\delta)) - E^2(u(0))$ equals:

$$\int_0^{\delta} \Big( \int_{\xi_1+\xi_2+\xi_3+\xi_4=0, |cos \angle(\xi_{12},\xi_{14})| \leq \beta_0} \big((\theta(\xi_1))^2-(\theta(\xi_2))^2+(\theta(\xi_3))^2-(\theta(\xi_4))^2 \big) \widehat{V}(\xi_3+\xi_4) \widehat{v}(\xi_1) \widehat{\bar{v}}(\xi_2) \widehat{v}(\xi_3) \widehat{\bar{v}}(\xi_4) d\xi_j +$$
$$+\int_{\xi_1+\xi_2+\xi_3+\xi_4+\xi_5+\xi_6=0} M_6(\xi_1,\xi_2,\xi_3,\xi_4,\xi_5,\xi_6) \widehat{v}(\xi_1) \widehat{\bar{v}}(\xi_2) \widehat{v}(\xi_3) \widehat{\bar{v}}(\xi_4) \widehat{v}(\xi_5) \widehat{\bar{v}}(\xi_6) d\xi_j \Big) dt=$$
\begin{equation}
\label{eq:A+B}
=\int_0^{\delta} I dt + \int_0^{\delta} II dt =: A + B.
\end{equation}

We now have to estimate $A$ and $B$ separately.

\subsubsection{Estimate of $A$ (Quadrilinear Terms)}

By symmetry, we can consider without loss of generality the contribution when:

$$|\xi_1| \geq |\xi_2|,|\xi_3|,|\xi_4|, \mbox{and}\, |\xi_2| \geq |\xi_4|.$$
Hence, we are considering the contribution in which one has:

$$|\xi_1|>N, |cos \angle(\xi_{12},\xi_{14})| \leq \beta_0.$$
We dyadically localize the frequencies: $|\xi_j| \sim N_j;j=1,\ldots,4$. We order the $N_j$ to obtain $N_j^* \geq N_2^* \geq N_3^* \geq N_4^*$. As in the periodic setting, we have:

\begin{equation}
\label{eq:NjstarAplane}
N_1 \sim N_2 \sim N_1^* \gtrsim N.
\end{equation}

We denote the corresponding contribution to $A$ by $A_{N_1,N_2,N_3,N_4}$. In other words:

$$A_{N_1,N_2,N_3,N_4}:=$$
$$\int_0^{\delta} \int_{\xi_1+\xi_2+\xi_3+\xi_4=0, |cos \angle(\xi_{12},\xi_{14})| \leq \beta_0}  \big((\theta(\xi_1))^2-(\theta(\xi_2))^2+(\theta(\xi_3))^2-(\theta(\xi_4))^2 \big) \widehat{V}(\xi_3+\xi_4)$$
$$\widehat{v}_{N_1}(\xi_1) \widehat{\bar{v}}_{N_2}(\xi_2) \widehat{v}_{N_3}(\xi_3) \widehat{\bar{v}}_{N_4}(\xi_4) d\xi_j dt.$$
As in the periodic setting, we have:

\begin{equation}
\label{eq:multiplierAplane}
\big((\theta(\xi_1))^2-(\theta(\xi_2))^2+(\theta(\xi_3))^2-(\theta(\xi_4))^2 \big) \widehat{V}(\xi_3+\xi_4)= O \big(\frac{N_3^*}{N_1^*}
\theta(N_1^*)\theta(N_2^*) \big).
\end{equation}
Using Parseval's identity in time, it follows that:

$$|A_{N_1,N_2,N_3,N_4}| \lesssim \int_{\tau_1+\tau_2+\tau_3+\tau_4=0} \int_{\xi_1+\xi_2+\xi_3+\xi_4=0, |cos \angle(\xi_{12},\xi_{14})| \leq \beta_0} \frac{N_3^*}{N_1^*} \theta(N_1^*)\theta(N_2^*)$$
$$ |(\chi v)\,\widetilde{}_{N_1}(\xi_1,\tau_1)| |\widetilde{\bar{v}}_{N_2}(\xi_2,\tau_2)| |\widetilde{v}_{N_3}(\xi_3,\tau_3)| |\widetilde{\bar{v}}_{N_4}(\xi_4,\tau_4)| d\xi_j d\tau_j.$$
\vspace{2mm}

We now consider two subcases:

\vspace{2mm}

\textbf{Subcase 1:} $N_4 \sim N_1$

\vspace{2mm}

We observe that:

$$|A_{N_1,N_2,N_3,N_4}| \lesssim $$
$$\frac{1}{N_1^*} \int_{\tau_1+\tau_2+\tau_3+\tau_4=0} \int_{\xi_1+\xi_2+\xi_3+\xi_4=0}
|(\mathcal{D}v)\,\widetilde{}_{N_1}(\xi_1,\tau_1)| |(\chi \mathcal{D}\bar{v})\,\widetilde{}_{N_2}(\xi_2,\tau_2)| |
(\nabla v)\,\widetilde{}_{N_3}(\xi_3,\tau_3)| |\widetilde{\bar{v}}_{N_4}(\xi_4,\tau_4)| d\xi_j d\tau_j.$$

Let us define $F_j;j=1,\ldots,4$ by:

\begin{equation}
\label{eq:FjSubcase1}
\widetilde{F_1}:= |(\mathcal{D}v)\,\widetilde{}_{N_1}|,\widetilde{F_2}:= |(\chi \mathcal{D}v)\,\widetilde{}_{N_2}|,
\widetilde{F_3}:= |(\nabla v)\,\widetilde{}_{N_3}|, \widetilde{F_4}:=|\widetilde{v}_{N_4}|
\end{equation}
Consequently, by Parseval's identity:

$$|A_{N_1,N_2,N_3,N_4}| \lesssim \frac{1}{N_1^*} \int_{\mathbb{R}} \int_{\mathbb{R}^2} F_1 \overline{F_2} F_3 \overline{F_4} dx dt.$$

We use an $L^{4+}_{t,x}, L^{4-}_{t,x}, L^4_{t,x}, L^4_{t,x}$ H\"{o}lder inequality, and argue as earlier to deduce that, in this subcase:

$$|A_{N_1,N_2,N_3,N_4}| \lesssim \frac{1}{N_1^*} \|(\mathcal{D}v)_{N_1}\|_{X^{0+,\frac{1}{2}+}} \|(\chi \mathcal{D}v)_{N_2}\|_{X^{0,\frac{1}{2}-}} \|(\nabla v)_{N_3}\|_{X^{0,\frac{1}{2}+}} \|v_{N_4}\|_{X^{0,\frac{1}{2}+}}$$
$$\lesssim \frac{1}{(N_1^*)^{1-}} \|\mathcal{D}v\|_{X^{0,\frac{1}{2}+}}^2 \|v\|_{X^{1,\frac{1}{2}+}} \big(\frac{1}{N_4} \|v\|_{X^{1,\frac{1}{2}+}} \big)$$

\begin{equation}
\label{eq:BoundonANj1plane}
\lesssim \frac{1}{(N_1^*)^{2-}} \|\mathcal{D}v\|_{X^{0,\frac{1}{2}+}}^2 \|v\|_{X^{1,\frac{1}{2}+}}^2 \lesssim \frac{1}{(N_1^*)^{2-}} E^1(\Phi).
\end{equation}
In the last step, we used Proposition \ref{Proposition 4.4}.

\vspace{2mm}

\textbf{Subcase 2:} $N_1 \gg N_4$

\vspace{2mm}

In this subcase, we need to consider two sub-subcases. Let $\gamma \in (0,1)$ be fixed. We will determine $\gamma$ later. (in equation $(\ref{eq:gammachoice})$)

\vspace{2mm}

\textbf{Sub-subcase 1:} $N_3 \lesssim N_1^{\gamma}$

\vspace{2mm}

Let the functions $F_j;j=1,\ldots,4$ be defined as in $(\ref{eq:FjSubcase1})$.
We use an $L^2_{t,x}, L^2_{t,x}$ H\"{o}lder inequality, and we argue as before to deduce that:

$$|A_{N_1,N_2,N_3,N_4}| \lesssim \frac{1}{N_1^*} \|F_1 F_3\|_{L^2_{t,x}} \|F_2 F_4\|_{L^2_{t,x}}.$$
We use Proposition \ref{Proposition 2.3} and Proposition \ref{chiImprovedStrichartz} to deduce that this expression is:
$$\lesssim \frac{1}{N_1^*} \big(\frac{N_3^{\frac{1}{2}}}{N_1^{\frac{1}{2}}} \|\mathcal{D}v_{N_1}\|_{X^{0,\frac{1}{2}+}}
\|\nabla v_{N_3}\|_{X^{0,\frac{1}{2}+}} \big) \big(\frac{N_4^{\frac{1}{2}}}{N_2^{\frac{1}{2}-}} \|\mathcal{D}v_{N_2}\|_{X^{0,\frac{1}{2}+}} \|v_{N_4}\|_{X^{0,\frac{1}{2}+}} \big)$$
\begin{equation}
\label{eq:BoundonANj2plane}
\lesssim \frac{1}{(N_1^*)^{2-}} N_1^{\frac{\gamma}{2}} \|\mathcal{D}v\|_{X^{0,\frac{1}{2}+}}^2 \|v\|_{X^{1,\frac{1}{2}+}} \|v\|_{X^{\frac{1}{2},\frac{1}{2}+}}\lesssim \frac{1}{(N_1^*)^{2-\frac{\gamma}{2}-}} E^1(\Phi).
\end{equation}

\vspace{2mm}

\textbf{Sub-subcase 2:} $N_3 \gtrsim N_1^{\gamma}$

\vspace{2mm}

In this sub-subcase, we have to work a little bit harder. The crucial estimate will be Proposition \ref{AngleImprovedStrichartz}. We suppose that $(\xi_1,\xi_2,\xi_3,\xi_4)$ is a frequency configuration occurring in the integral defining $A_{N_1,N_2,N_3,N_4}$. We argue as in \cite{CKSTT7}. We note the elementary trigonometry fact that in this frequency regime, one has: $\angle(\xi_1,\xi_{14})=O \big(\frac{N_4}{N_1}\big), \angle(\xi_3,\xi_{34})=O \big(\frac{N_4}{N_3}\big)$. Furthermore, one can use Lipschitz properties of the cosine function to deduce that:

\begin{equation}
\label{eq:cosinebound}
|cos \angle(\xi_1,\xi_3)| \lesssim \beta_0+\frac{N_4}{N_3}.
\end{equation}

We now define:

$$F(x,t):=\int_{\mathbb{R}} \int_{\mathbb{R}} \int_{\mathbb{R}^2} \int_{\mathbb{R}^2} e^{it(\tau_1+\tau_2)+ i\langle x, \xi_1+\xi_2 \rangle} \chi_{|\cos \angle(\xi_1,\xi_2)|\leq \beta_0+\frac{N_4}{N_3}}
\widetilde{F_1}(\xi_1,\tau_1) \widetilde{F_3}(\xi_2,\tau_2) d\xi_1 d\xi_2 d\tau_1 d\tau_2.$$

We now use an $L^2_{t,x},L^2_{t,x}$ H\"{o}lder inequality, and recall $(\ref{eq:FjSubcase1})$ to deduce that one now has:

$$|A_{N_1,N_2,N_3,N_4}|\lesssim \frac{1}{N_1^*} \|F\|_{L^2_{t,x}} \|F_2 F_4\|_{L^2_{t,x}}$$
which by using Proposition \ref{AngleImprovedStrichartz} and Proposition \ref{chiImprovedStrichartz} is:

$$\lesssim \frac{1}{N_1^*} \big(\beta_0+\frac{N_4}{N_3}\big)^{\frac{1}{2}} \|F_1\|_{X^{0,\frac{1}{2}+}} \|F_3\|_{X^{0,\frac{1}{2}+}} \big( \frac{N_4^{\frac{1}{2}}}{N_2^{\frac{1}{2}-}} \|\mathcal{D}v_{N_2}\|_{X^{0,\frac{1}{2}+}} \|v_{N_4}\|_{X^{0,\frac{1}{2}+}} \big)$$
$$\lesssim \frac{\beta_0^{\frac{1}{2}}}{(N_1^*)^{\frac{3}{2}-}}\|\mathcal{D}v_{N_1}\|_{X^{0,\frac{1}{2}+}}
\|\mathcal{D}v_{N_2}\|_{X^{0,\frac{1}{2}+}} \|v_{N_3}\|_{X^{1,\frac{1}{2}+}} \|v_{N_4}\|_{X^{\frac{1}{2},\frac{1}{2}+}}$$
$$+\frac{1}{(N_1^*)^{\frac{3}{2}+\frac{\gamma}{2}-}}\|\mathcal{D}v_{N_1}\|_{X^{0,\frac{1}{2}+}} \|\mathcal{D}v_{N_2}\|_{X^{0,\frac{1}{2}+}} \|v_{N_3}\|_{X^{1,\frac{1}{2}+}} \|v_{N_4}\|_{X^{1,\frac{1}{2}+}}$$

\begin{equation}
\label{eq:BoundonANj3plane}
\lesssim \big(\frac{\beta_0^{\frac{1}{2}}}{(N_1^*)^{\frac{3}{2}-}}+\frac{1}{(N_1^*)^{\frac{3}{2}+\frac{\gamma}{2}-}}\big) E^1(\Phi).
\end{equation}

\vspace{2mm}

We combine $(\ref{eq:BoundonANj1plane})$, $(\ref{eq:BoundonANj2plane})$, and $(\ref{eq:BoundonANj3plane})$ to deduce that:

\begin{equation}
\label{eq:BoundonANjplane}
|A_{N_1,N_2,N_3,N_4}| \lesssim \big(\frac{\beta_0^{\frac{1}{2}}}{(N_1^*)^{\frac{3}{2}-}} + \frac{1}{(N_1^*)^{\frac{3}{2}+\frac{\gamma}{2}-}} + \frac{1}{(N_1^*)^{2-\frac{\gamma}{2}-}} \big)E^1(\Phi).
\end{equation}

We then sum in the $N_j$, use $(\ref{eq:NjstarAplane})$, and Proposition \ref{E1E2equivalenceplane} to deduce that:

\begin{equation}
\label{eq:BoundonAplane}
|A| \lesssim \big(\frac{\beta_0^{\frac{1}{2}}}{N^{\frac{3}{2}-}} + \frac{1}{N^{\frac{3}{2}+\frac{\gamma}{2}-}} + \frac{1}{N^{2-\frac{\gamma}{2}-}} \big) E^2(\Phi).
\end{equation}

\subsubsection{Estimate of $B$ (Sextilinear Terms)}

Let us consider just the first term in $B$ coming from the summand $M_4(\xi_{123},\xi_4,\xi_5,\xi_6)$ in the definition of $M_6$. The other terms are bounded analogously. In other words, we want to estimate:

$$B^{(1)}:= \int_0^{\delta} \int_{\xi_1+\xi_2+\xi_3+\xi_4+\xi_5+\xi_6=0} M_4(\xi_{123},\xi_4,\xi_5,\xi_6) \widehat{(v\bar{v}v)}(\xi_1+\xi_2+\xi_3)
\widehat{\bar{v}}(\xi_4) \widehat{v}(\xi_5) \widehat{\bar{v}}(\xi_6) d\xi_j dt.$$
The bounds that we will prove for $B^{(1)}$ will also hold for $B$, with different constants.

We now dyadically localize $\xi_{123},\xi_4,\xi_5,\xi_6$, i.e., we find $N_j;j=1, \ldots, 4$ such that:
$$|\xi_{123}| \sim N_1, |\xi_4| \sim N_2, |\xi_5| \sim N_3, |\xi_6| \sim N_4.$$
Let us define:

$$B^{(1)}_{N_1,N_2,N_3,N_4}:=$$
$$\int_0^{\delta} \int_{\xi_1+\xi_2+\xi_3+\xi_4+\xi_5+\xi_6=0} M_4(\xi_{123},\xi_4,\xi_5,\xi_6) \widehat{(v\bar{v}v)}_{N_1}(\xi_1+\xi_2+\xi_3)
\widehat{\bar{v}}_{N_2}(\xi_4) \widehat{v}_{N_3}(\xi_5) \widehat{\bar{v}}_{N_4}(\xi_6) d\xi_j dt$$
We now order the $N_j$ to obtain: $N_1^* \geq N_2^* \geq N_3^* \geq N_4^*$. As before, we know the following localization bound:

\begin{equation}
\label{eq:NjlokalizacijaBplane}
N_1^* \sim N_2^* \gtrsim N.
\end{equation}

In order to obtain a bound on the wanted term, we have to consider two cases.

\vspace{3mm}

\textbf{Case 1:} $N_1=N_1^*$ or $N_1=N_2^*$.

\vspace{2mm}

\textbf{Case 2:} $N_1=N_3^*$ or $N_1=N_4^*$

\vspace{3mm}

\textbf{Case 1:} As in the periodic case, we consider the case when:
$$N_1=N_1^*, N_2=N_2^*, N_3=N_3^*, N_4=N_4^*.$$
The other cases are analogous.

We use Parseval's identity together with the \emph{Fractional Leibniz Rule for $\mathcal{D}$}, and argue as in the periodic case to deduce that it suffices to bound the quantity:

$$K_{N_1,N_2,N_3,N_4}:=$$
$$\int_{\tau_1+\cdots+\tau_6=0} \int_{\xi_1+\cdots+\xi_6=0} \frac{1}{\beta_0(N_1^*)^2}
|(\mathcal{D}v)\,\widetilde{}\,(\xi_1,\tau_1)||\widetilde{\bar{v}}(\xi_2,\tau_2)||\widetilde{v}(\xi_3,\tau_3)|
|(\mathcal{D}\bar{v})\,\widetilde{}_{N_2}(\xi_4,\tau_4)||(\chi v)\,\widetilde{}_{N_3}(\xi_5,\tau_5)|
|\widetilde{\bar{v}}_{N_4}(\xi_4,\tau_4)| d\xi_j d\tau_j.$$
We must consider several subcases:

\vspace{3mm}

\textbf{Subcase 1:} $N_1 \gg N_3$

\vspace{2mm}

Let us define the functions $F_j;j=1,\ldots,6$ by:

\begin{equation}
\label{eq:functionsFj}
\widetilde{F_1}:=|(\mathcal{D}v)\,\widetilde{}\,|, \widetilde{F_2}=\widetilde{F_3}:=|\widetilde{v}|, \widetilde{F_4}:= |(\mathcal{D}v)\,\widetilde{}_{N_2}|, \widetilde{F_5}:=|(\chi v)\,\widetilde{}_{N_3}|, \widetilde{F_6}:=|\widetilde{v}_{N_4}|.
\end{equation}
We first use an $L^2_{t,x}, L^M_{t,x}, L^M_{t,x}, L^2_{t,x}, L^{4+}_{t,x}$ H\"{o}lder inequality to deduce that:

$$K_{N_1,N_2,N_3,N_4} \lesssim \frac{1}{\beta_0(N_1^*)^2} \|F_4 F_5\|_{L^2_{t,x}} \|F_2\|_{L^M_{t,x}} \|F_3\|_{L^M_{t,x}} \|F_1\|_{L^4_{t,x}} \|F_6\|_{L^{4+}_{t,x}}.$$
By Proposition \ref{chiImprovedStrichartz}, $(\ref{eq:LMtx})$, $(\ref{eq:L4plane})$, $(\ref{eq:L4+})$ adapted to the non-periodic setting, by the fact that taking absolute values in the spacetime Fourier transform, and since $N_1 \sim N_2$, it follows that this expression is:

$$\lesssim \frac{1}{\beta_0(N_1^*)^2} \big( \frac{N_3^{\frac{1}{2}}}{N_1^{\frac{1}{2}-}} \|\mathcal{D}v\|_{X^{0,\frac{1}{2}+}} \|v_{N_3}\|_{X^{0,\frac{1}{2}+}} \big) \|v\|_{X^{1,\frac{1}{2}+}}
\|v\|_{X^{1,\frac{1}{2}+}} \|\mathcal{D}v\|_{X^{0,\frac{1}{2}+}} \|v_{N_4}\|_{X^{0+,\frac{1}{2}+}}.$$
We use localization in frequency to deduce that this is:

$$\lesssim \frac{1}{\beta_0(N_1^*)^{\frac{5}{2}-}} \|\mathcal{D}v\|_{X^{0,\frac{1}{2}+}}^2 \|v\|_{X^{1,\frac{1}{2}+}}^4.$$
which by Proposition \ref{Proposition 4.4} is:

\begin{equation}
\label{eq:KNjboundSubcase1plane}
\lesssim \frac{1}{\beta_0(N_1^*)^{\frac{5}{2}-}} E^1(\Phi).
\end{equation}

\textbf{Subcase 2:} $N_3 \sim N_1$

We use an $L^4_{t,x}, L^M_{t,x}, L^M_{t,x}, L^4_{t,x}, L^{4-}_{t,x}, L^{4+}_{t,x}$ H\"{o}lder inequality,  and we argue as in the periodic case to deduce that:

$$K_{N_1,N_2,N_3,N_4} \lesssim
\frac{1}{\beta_0(N_1^*)^2} \|\mathcal{D}v\|_{X^{0,\frac{1}{2}+}} \|v\|_{X^{1,\frac{1}{2}+}} \|v\|_{X^{1,\frac{1}{2}+}}
\|\mathcal{D}v\|_{X^{0,\frac{1}{2}+}} \|\chi v_{N_3}\|_{X^{0,\frac{1}{2}-}} \|v_{N_4}\|_{X^{0+,\frac{1}{2}+}}$$
$$\lesssim \frac{1}{\beta_0(N_1^*)^2} \|\mathcal{D}v\|_{X^{0,\frac{1}{2}+}}^2 \|v\|_{X^{1,\frac{1}{2}+}}^2 \big(\frac{1}{N_3} \|v\|_{X^{1,\frac{1}{2}+}} \big) \|v\|_{X^{1,\frac{1}{2}+}}$$
\begin{equation}
\label{eq:KNjboundSubcase2plane}
\lesssim \frac{1}{\beta_0(N_1^*)^3} E^1(\Phi).
\end{equation}

\vspace{3mm}

\textbf{Case 2:} $N_1=N_3^*$ or $N_1=N_4^*$.

\vspace{2mm}

We assume as in the periodic case that $N_1=N_3^*$.
Let's also suppose that $N_3=N_1^*,N_2=N_2^*$. The other contributions are bounded analogously.
Arguing as in the periodic case, we have to bound:

$$L_{N_1,N_2,N_3,N_4} := \int_{\tau_1+\cdots+\tau_6=0} \int_{\xi_1+\cdots+\xi_6=0} \frac{1}{\beta_0(N_1^*)^2}$$
$$|\widetilde{v}(\xi_1,\tau_1)||\widetilde{\bar{v}}(\xi_2,\tau_2)||\widetilde{v}(\xi_3,\tau_3)|
|(\chi \mathcal{D}\bar{v})\,\widetilde{}_{N_2}(\xi_4,\tau_4)||(\mathcal{D}v)\,\widetilde{}_{N_3}(\xi_5,\tau_5)|
|\widetilde{\bar{v}}_{N_4}(\xi_6,\tau_6)| d\xi_j d\tau_j.$$

We consider two subcases:

\vspace{2mm}

\textbf{Subcase 1:} $N_1^* \gg N_4$

\vspace{2mm}

We know that: $N_2 \gg N_4$

\vspace{2mm}

Let us estimate $L_{N_1,N_2,N_3,N_4}$. We define $F_j, j=1,\ldots,4$ by:

$$\widetilde{F_1}:=|\widetilde{v}|, \widetilde{F_2}:=|(\chi \mathcal{D}v)\,\widetilde{}_{N_2}|, \widetilde{F_3}:= |(\mathcal{D}v)\,\widetilde{}_{N_3}|, \widetilde{F_4}:=|\widetilde{v}_{N_4}|.$$

We use an $L^M_{t,x},L^M_{t,x},L^M_{t,x},L^{2+}_{t,x},L^2_{t,x}$ H\"{o}lder inequality,  $(\ref{eq:LMtx})$ adapted to the non-periodic setting, Proposition \ref{chiImprovedStrichartz}, and $(\ref{eq:L2+})$ to deduce that:

$$L_{N_1,N_2,N_3,N_4} \lesssim \frac{1}{\beta_0(N_1^*)^2} \|F_1\|_{L^M_{t,x}}^3 \|F_2 F_4\|_{L^2_{t,x}} \|F_3\|_{L^{2+}_{t,x}}$$
$$\lesssim \frac{1}{\beta_0(N_1^*)^2} \|v\|_{X^{1,\frac{1}{2}+}}^3 \big( \frac{N_4^{\frac{1}{2}}}{N_2^{\frac{1}{2}-}}
\|\mathcal{D}v_{N_2}\|_{X^{0,\frac{1}{2}+}} \|v_{N_4}\|_{X^{0,\frac{1}{2}+}} \big) \|\mathcal{D}v_{N_3}\|_{X^{0+,\frac{1}{2}+}}$$
$$\lesssim \frac{1}{\beta_0(N_1^*)^{\frac{5}{2}-}} \|\mathcal{D}v\|_{X^{0,\frac{1}{2}+}}^2 \|v\|_{X^{1,\frac{1}{2}+}}^3 \|v_{N_4}\|_{X^{\frac{1}{2},\frac{1}{2}+}}$$
\begin{equation}
\label{eq:LNjbound}
\lesssim \frac{1}{\beta_0(N_1^*)^{\frac{5}{2}-}} \|\mathcal{D}v\|_{X^{0,\frac{1}{2}+}}^2 \|v\|_{X^{1,\frac{1}{2}+}}^4 \lesssim \frac{1}{\beta_0(N_1^*)^{\frac{5}{2}-}} E^1(\Phi).
\end{equation}
For the last inequality, we used Proposition \ref{Proposition 4.4}.

\vspace{2mm}

\textbf{Subcase 2:} $N_4 \sim N_1^*$

\vspace{2mm}

We argue similarly as in Subcase 2 of Case 1 to deduce that:

\begin{equation}
\label{eq:KNjboundCase2Subcase2plane}
L_{N_1,N_2,N_3,N_4}\lesssim \frac{1}{\beta_0(N_1^*)^3} E^1(\Phi)
\end{equation}

We use $(\ref{eq:KNjboundSubcase1plane})$, $(\ref{eq:KNjboundSubcase2plane})$, $(\ref{eq:LNjbound})$, and $(\ref{eq:KNjboundCase2Subcase2plane})$ to deduce that:

\begin{equation}
\label{eq:B1Njboundplane}
|B^{(1)}_{N_1,N_2,N_3,N_4}| \lesssim \frac{1}{\beta_0(N_1^*)^{\frac{5}{2}-}} E^1(\Phi).
\end{equation}
We sum in $N_j$. Using $(\ref{eq:NjlokalizacijaBplane})$ and $(\ref{eq:B1Njboundplane})$, it follows that:

$$|B^{(1)}| \lesssim \frac{1}{\beta_0 N^{\frac{5}{2}-}} E^1(\Phi).$$
By Proposition \ref{E1E2equivalenceplane}, and by construction of $B^{(1)}$, we deduce that:

\begin{equation}
\label{eq:BoundonBplane}
|B| \lesssim \frac{1}{\beta_0 N^{\frac{5}{2}-}} E^2(\Phi).
\end{equation}

\subsection{Choice of the optimal parameters}

By $(\ref{eq:A+B})$, $(\ref{eq:BoundonAplane})$, and $(\ref{eq:BoundonBplane})$, it follows that:

\begin{equation}
\label{eq:optimizeparameters}
|E^2(u(\delta)) - E^2(u(0))| \lesssim \big(\frac{\beta_0^{\frac{1}{2}}}{N^{\frac{3}{2}-}} + \frac{1}{N^{\frac{3}{2}+\frac{\gamma}{2}-}} + \frac{1}{N^{2-\frac{\gamma}{2}-}} + \frac{1}{\beta_0 N^{\frac{5}{2}-}} \big)E^2(\Phi).
\end{equation}

We now choose $\gamma$ s.t. $\frac{3}{2}+\frac{\gamma}{2}=2-\frac{\gamma}{2}$. Hence, we choose:

\begin{equation}
\label{eq:gammachoice}
\gamma:=\frac{1}{2}.
\end{equation}

One then has that:

\begin{equation}
\label{eq:gammacondition}
\frac{3}{2}+\frac{\gamma}{2}=2-\frac{\gamma}{2}=\frac{7}{4}.
\end{equation}

Let us now choose $\beta_0$.
We recall that by $(\ref{eq:theta0choiceplane})$, one has: $\beta_0 \sim \frac{1}{N^{\alpha}}, \alpha \in (0,1)$.

In order to have $\frac{\beta_0^{\frac{1}{2}}}{N^{\frac{3}{2}-}} \lesssim \frac{1}{N^{\frac{7}{4}-}}$, we should take:
$\alpha \geq \frac{1}{2}$.

In order to have $\frac{1}{\beta_0 N^{\frac{5}{2}-}} \lesssim \frac{1}{N^{\frac{7}{4}-}}$, we should take: $\alpha \leq \frac{3}{4}$.

Consequently, we take:

\begin{equation}
\label{eq:alphachoice}
\alpha \in [\frac{1}{2},\frac{3}{4}].
\end{equation}

From the preceding, we may conclude that:

\begin{equation}
\label{eq:iterationboundplane}
|E^2(u(\delta))-E^2(u(0))| \lesssim \frac{1}{N^{\frac{7}{4}-}} E^2(u(0)).
\end{equation}

Lemma \ref{Bigstarplane} now follows.

\end{proof}

\subsection{Further remarks on the equation.}

\begin{remark}
\label{Remark 4.2}

\vspace{2mm}

Let us observe that Theorem \ref{Theorem 2} would follow immediately if we knew that the equation $(\ref{eq:Hartree})$ on $\mathbb{R}^2$ scattered in $H^s$. To the best of our knowledge, this result isn't available, and it can't be deduced from currently known techniques used to prove scattering.
Some scattering results for the Hartree equation were previously studied in \cite{GV1,GV2,GV3}. In \cite{GV1,GV2}, the asymptotic completeness step was proved by using techniques from \cite{MS}, which work in dimensions $n\geq 3$.
In \cite{GV3}, the one and two-dimensional equations are studied. In this case, scattering results are deduced for a subset of solutions with initial data which belongs to a Gevrey class.

\vspace{2mm} Further scattering results for the Hartree equation are noted in \cite{GiOz,HNO}. In these papers, one assumes that the initial data lies in an appropriate weighted Sobolev space. The implied bounds depend on the corresponding weighted Sobolev norms of the initial data. Hence, uniform bounds on appropriate Sobolev norms of solutions whose initial data doesn't lie in the weighted Sobolev spaces can't be deduced by density. Finally, the techniques used in \cite{MWX,MXZ1,MXZ2,MXZ3,MXZ3,MXZ4,MXZ5} are restricted to higher dimensions and work for a specific type of convolution potential that is different from ours. 

\end{remark}

\section{Appendix A: Proof of Proposition \ref{Proposition 3.1}}

\begin{proof}

The proof is based on a fixed-point argument.
Let us without loss of generality look at $t_0=0$. With notation as in \cite{SoSt1}, we consider:

\begin{equation}
\label{eq:definitionofL}
Lw:=\chi_{\delta}(t)S(t)\Phi - i\chi_{\delta}(t) \int_0^{t} S(t-t')(V*|w_{\delta}|^2)w_{\delta}(t') dt'.
\end{equation}
Let $c>0$ be the constant\footnote{This time localization estimate, and all the other similar estimates that we had to use in \cite{SoSt1} carry over to the torus.} such that $\|\chi_{\delta} S(t)\Phi\|_{X^{s,b}}\leq c \delta^{\frac{1-2b}{2}}\|\Phi\|_{H^s}$. Such a constant exists by using arguments from \cite{KPV2,SoSt1}.
\vspace{2mm}
We then define:

$$B:=\{w; \|w\|_{X^{1,b}} \leq 2c \delta^{\frac{1-2b}{2}} \|\Phi\|_{H^1}, \|w\|_{X^{s,b}} \leq 2c \delta^{\frac{1-2b}{2}} \|\Phi\|_{H^s}\}.$$
\vspace{2mm}
Arguing as in \cite{SoSt1}, $B$ is complete w.r.t $\|\cdot\|_{X^{1,b}}$.
\vspace{2mm}
For $w \in B$, we obtain:

\begin{equation}
\label{eq:LwXsb}
\|Lw\|_{X^{s,b}} \leq c \delta^{\frac{1-2b}{2}} \|\Phi\|_{H^s} + c_1 \delta^{\frac{1-2b}{2}} \|(V*|w_{\delta}|^2)w_{\delta}\|_{X^{s,b-1}}.
\end{equation}
\vspace{2mm}
Similarly, we obtain:

\begin{equation}
\label{eq:DLwX0b}
\|\mathcal{D}Lw\|_{X^{0,b}} \leq c \delta^{\frac{1-2b}{2}} \|\mathcal{D}\Phi\|_{L^2} + c_1 \delta^{\frac{1-2b}{2}} \|\mathcal{D}((V*|w_{\delta}|^2)w_{\delta})\|_{X^{0,b-1}}.
\end{equation}
\vspace{2mm}
We now estimate $\|(V*|w_{\delta}|^2)w_{\delta}\|_{X^{s,b-1}}$ by duality. Namely, suppose that we are given $c=c(n,\tau)$ such that: $$\sum_n \int d\tau |c(n,\tau)|^2=1.$$
We want to estimate:

$$I:= \sum_{n_1-n_2+n_3-n_4=0} \int_{\tau_1-\tau_2+\tau_3-\tau_4=0} \frac{|c(n_4,\tau_4)|}{(1+|\tau_4-|n_4|^2|)^{1-b}}
(1+|n_4|)^s |\widetilde{w_{\delta}}(n_1,\tau_1)|$$
\vspace{2mm}
$$|\widetilde{w_{\delta}}(n_2,\tau_2)||\widetilde{w_{\delta}}(n_3,\tau_3)|
|\widehat{V}(n_1+n_2)| d \tau_j.$$
\vspace{2mm}
Since $\widehat{V} \in L^{\infty}(\mathbb{Z}^2)$, this expression is:

$$\lesssim \sum_{n_1-n_2+n_3-n_4=0} \int_{\tau_1-\tau_2+\tau_3-\tau_4=0} \frac{|c(n_4,\tau_4)|}{(1+|\tau_4+|n_4|^2|)^{1-b}}
(1+|n_4|)^s |\widetilde{w_{\delta}}(n_1,\tau_1)|$$
\vspace{2mm}
$$|\widetilde{w_{\delta}}(n_2,\tau_2)||\widetilde{w_{\delta}}(n_3,\tau_3)| d \tau_j.$$
\vspace{2mm}
Let us write:

$$\mathbb{Z}^2= \bigcup_{k=0}^{\infty} D_k;\,D_k=\{n \in \mathbb{Z}^2; |n|\sim 2^k \}.$$
Let $I_{k_1,k_2,k_3}$ denote the contribution to $I$ with $n_j \in D_{k_j}$, for $j=1,2,3$. Let us consider without loss of generality the case when:

\begin{equation}
\label{eq:kjorder}
k_1 \geq k_2 \geq k_3.
\end{equation}
\vspace{2mm}
The contributions from other cases are bounded analogously.

\vspace{2mm}

Following \cite{B3}, we write:

$$D_{k_1} \subseteq \bigcup_{\alpha} Q_{\alpha}.$$
Here, $Q_{\alpha}$ are balls of radius $2^{k_2}$. We can choose this cover so that each element of $D_{k_1}$ lies in a fixed finite number of $Q_{\alpha}$. This number is independent of $k_1$ and $k_2$.

\vspace{2mm}

If $n_1 \in Q_{\alpha}$, then since $n_4=n_1-n_2+n_3,\,|n_2|,|n_3|\lesssim 2^{k_2}$, it follows that $n_4$ lies in $\widetilde{Q}_{\alpha}$, a dilate of $Q_{\alpha}$. Thus, the term that we want to estimate is:

$$J_{k_1,k_2,k_3}:=2^{k_1 s}\, \sum_{\alpha} \sum_{n_1 \in Q_{\alpha}, n_2 \in D_{k_2}, n_3 \in D_{k_3}, n_4 \in \widetilde{Q}_{\alpha},\\ n_1-n_2+n_3-n_4=0 } \int_{\tau_1-\tau_2+\tau_3-\tau_4=0}$$
\vspace{2mm}
$$|\widetilde{w_{\delta}}(n_1,\tau_1)||\widetilde{w_{\delta}}(n_2,\tau_2)||\widetilde{w_{\delta}}(n_3,\tau_3)|
\frac{|c(n_4,\tau_4)|}{(1+|\tau_4+|n_4|^2|)^{1-b}} d\tau_j.$$
We now define:

\begin{equation}
\label{eq:Falpha}
F_{\alpha}(x,t):=\sum_{n \in \widetilde{Q}_{\alpha}} \int d \tau \frac{|c(n,\tau)|}{(1+|\tau+|n|^2|)^{1-b}} e^{i(\langle n, x \rangle + \tau t)}.
\end{equation}

\begin{equation}
\label{eq:Galpha}
G_{\alpha}(x,t):=\sum_{n \in Q_{\alpha}} \int d \tau |\widetilde{w_{\delta}}(n,\tau)| e^{i(\langle n, x \rangle + \tau t)}.
\end{equation}

\begin{equation}
\label{eq:Hi}
H_j(x,t):=\sum_{n \in D_{k_j}} \int d \tau |\widetilde{w_{\delta}}(n,\tau)| e^{i(\langle n, x \rangle + \tau t)}.
\end{equation}
By Parseval's identity and H\"{o}lder's inequality, we deduce:

$$J_{k_1,k_2,k_3} \lesssim 2^{k_1 s}\,\sum_{\alpha} \int_{\mathbb{R}} \int_{\mathbb{T}^2} \overline{F_{\alpha}}
G_{\alpha} \overline{H_2} H_3 dx dt$$
\vspace{2mm}
$$\leq 2^{k_1 s}\,\sum_{\alpha} \|F_{\alpha}\|_{L^4_{t,x}} \|G_{\alpha}\|_{L^4_{t,x}} \|H_2\|_{L^4_{t,x}}
\|H_3\|_{L^4_{t,x}}.$$

Now, from Lemma \ref{Lemma 2.2}, with $s_1,b_1$ as in the assumptions of the Lemma, we have:

$$\|H_2\|_{L^4_{t,x}} \lesssim 2^{k_2 s_1} (\sum_{n \in D_{k_2}} d \tau (1+|\tau+|n|^2|)^{2b_1}|\widetilde{w_{\delta}}(n,\tau)|^2)^{\frac{1}{2}}$$
\vspace{2mm}
$$\lesssim 2^{k_2 s_1} \|(w_{\delta})_{2^{k_2}}\|_{X^{0,b_1}}.$$

\vspace{2mm}

Here $(w_{\delta})_M$ is defined by: $((w_{\delta})_M)\,\widehat{}\,=\widehat{w_{\delta}} \chi_{D_M}$, and we note that localization in $t$ and in $n$ commute. This is a slight abuse of notation, but we interpret $w_{\delta}$ as a localization in time if $\delta>0$ is small, and we interpret $w_N$ as a localization in frequency if $N$ is a dyadic integer.

\vspace{2mm}

By interpolation, it follows that:

$$\|(w_{\delta})_{2^{k_2}}\|_{X^{0,b_1}} \lesssim \|(w_{\delta})_{2^{k_2}}\|_{X^{0,0}}^{\theta} \|(w_{\delta})_{2^{k_2}}\|_{X^{0,b}}^{1-\theta}.$$

\vspace{2mm}

Here:

\begin{equation}
\label{eq:thetajednadzba}
\theta:= 1- \frac{b_1}{b}.
\end{equation}

\vspace{2mm}

By construction of $\psi_{\delta}$, we obtain:

$$\|(w_{\delta})_{2^{k_2}}\|_{X^{0,0}}=\|(w_{\delta})_{2^{k_2}}\|_{L^2_{t,x}}
=\|(w_{\delta})_{2^{k_2}} \psi_{\delta}\|_{L^2_{t,x}}$$
We now use H\"{o}lder's inequality and $(\ref{eq:L4tL2x})$ to see that this expression is:

$$\lesssim \|(w_{\delta})_{2^{k_2}}\|_{L^4_tL^2_x} \|\psi_{\delta}\|_{L^4_t}
\lesssim \delta^{\frac{1}{4}} \|(w_{\delta})_{2^{k_2}}\|_{X^{0,\frac{1}{4}+}}
\leq \delta^{\frac{1}{4}} \|(w_{\delta})_{2^{k_2}}\|_{X^{0,b}}.$$

\vspace{2mm}

Consequently:

$$\|H_2\|_{L^4_{t,x}} \lesssim 2^{k_2 s_1} \delta^{\frac{\theta}{4}} \|(w_{\delta})_{2^{k_2}}\|_{X^{0,b}}$$
\vspace{2mm}
\begin{equation}
\label{eq:H2bound}
\lesssim 2^{k_2 s_1} \delta^{\frac{\theta}{4}+\frac{1-2b}{2}}\|w_{2^{k_2}}\|_{X^{0,b}}.
\end{equation}

\vspace{2mm}

In the last inequality, we used appropriate time-localization in $X^{0,b}$.

\vspace{2mm}

Analogously:

\begin{equation}
\label{eq:H3bound}
\|H_3\|_{L^4_{t,x}} \lesssim 2^{k_3 s_1} \delta^{\frac{\theta}{4}+\frac{1-2b}{2}}\|w_{2^{k_3}}\|_{X^{0,b}}.
\end{equation}

\vspace{2mm}

Given an index $\alpha$, we define $(w_{\delta})_{\alpha}$, and $w_{\alpha}$ to be the restriction to $n \in Q_{\alpha}$ of $w_{\delta}$ and $w$ respectively. We note that this is a different localization than the ones we used before.
Since each $Q_{\alpha}$ has radius $2^{k_2}$, Lemma \ref{Lemma 2.2} implies that:

$$\|G_{\alpha}\|_{L^4_{t,x}} \lesssim 2^{k_2 s_1} (\sum_{n \in Q_{\alpha}} d \tau (1+|\tau+|n|^2|)^{2b_1}|\widetilde{w_{\delta}}(n,\tau)|^2)^{\frac{1}{2}}$$
\vspace{2mm}
$$\lesssim 2^{k_2 s_1} \|(w_{\delta})_{\alpha}\|_{X^{0,b_1}}.$$

\vspace{2mm}

Arguing as in $(\ref{eq:H2bound})$,$(\ref{eq:H3bound})$, we obtain:

\begin{equation}
\label{eq:Galphabound}
\|G_{\alpha}\|_{L^4_{t,x}} \lesssim 2^{k_2 s_1} \delta^{\frac{\theta}{4}+\frac{1-2b}{2}} \|w_{\alpha}\|_{X^{0,b}}.
\end{equation}

\vspace{2mm}

Furthermore, each $Q_{\alpha}$ is of radius $\sim 2^{k_2}$. Let $c_{\alpha}$ be the restriction of $c$ to $n \in \widetilde{Q}_{\alpha}$. Let us also choose $b_1$ such that:

\begin{equation}
\label{eq:b1uvjet}
b_1\leq 1-b.
\end{equation}

\vspace{2mm}

From Lemma \ref{Lemma 2.2}, and the previous definitions, we obtain:

$$\|F_{\alpha}\|_{L^4_{t,x}} \lesssim 2^{k_2 s_1} \|F_{\alpha}\|_{X^{0,b_1}} \leq 2^{k_2 s_1} \|F_{\alpha}\|_{X^{0,1-b}}$$
\vspace{2mm}
\begin{equation}
\label{eq:Falphabound}
\lesssim 2^{k_2 s_1} \|c_{\alpha}\|_{L^2_{\tau,n}}.
\end{equation}

\vspace{2mm}

From $(\ref{eq:H2bound}),(\ref{eq:H3bound}),(\ref{eq:Galphabound}),(\ref{eq:Falphabound})$, it follows that:

$$J_{k_1,k_2,k_3} \lesssim \sum_{\alpha} \delta^{\frac{3\theta}{4} + \frac{3(1-2b)}{2}} 2^{k_1 s} 8^{k_2 s_1} 2^{k_3 s_1}
\|w_{2^{k_2}}\|_{X^{0,b}} \|w_{2^{k_3}}\|_{X^{0,b}} \|w_{\alpha}\|_{X^{0,b}} \|c_{\alpha}\|_{L^2_{\tau,n}}.$$
We apply the Cauchy-Schwarz inequality in $\alpha$ to deduce that the previous expression is \footnote{Strictly speaking, we are making the annulus $|n| \sim 2^{k_1}$ a little bit larger, but we write the localization in the same way as before.}:

$$\lesssim \delta^{\frac{3\theta}{4} + \frac{3(1-2b)}{2}} 2^{k_1 s} 8^{k_2 s_1} 2^{k_3 s_1}
\|w_{2^{k_1}}\|_{X^{0,b}} \|w_{2^{k_2}}\|_{X^{0,b}} \|w_{2^{k_3}}\|_{X^{0,b}} \|c_{2^{k_1}}\|_{L^2_{\tau,n}}.$$

\vspace{2mm}

We write $8^{k_2s_1}=(8^{k_2s_1})^{0-}(8^{k_2s_1})^{1+},2^{k_3s_1}=(2^{k_3s_1})^{0-}(2^{k_3s_1})^{1+}$, and we sum a geometric series in $k_2,k_3$ to deduce that:

$$\sum_{k_j \,\mbox{satisfying}\, {(\ref{eq:kjorder})}}J_{k_1,k_2,k_3} \lesssim$$
\vspace{2mm}
$$\lesssim  \sum_{k_1} \delta^{\frac{3\theta}{4} + \frac{3(1-2b)}{2}} \|w_{2^{k_1}}\|_{X^{s,b}} \|c_{2^{k_1}}\|_{L^2_{\tau,n}}
\|w\|_{X^{3s_1+,b}}\|w\|_{X^{s_1+,b}}.$$

\vspace{2mm}

Using the Cauchy-Schwarz inequality in $k_1$, this expression is:

$$\lesssim \delta^{\frac{3\theta}{4}+\frac{3(1-2b)}{2}} \|w\|_{X^{s,b}} \|c\|_{L^2_{\tau,n}} \|w\|_{X^{3s_1+,b}}\|w\|_{X^{s_1+,b}}$$
\vspace{2mm}
\begin{equation}
\label{eq:sumJkbound}
\lesssim \delta^{\frac{3\theta}{4}+\frac{3(1-2b)}{2}} \|w\|_{X^{s,b}}\|w\|_{X^{3s_1+,b}}^2.
\end{equation}

\vspace{2mm}

Let us take $s_1=\frac{1}{3}-$. Then, the assumptions of Lemma \ref{Lemma 2.2} will be satisfied if we take $b_1=\frac{1-(\frac{1}{3}-)}{2}+=\frac{1}{3}+$. Since $b=\frac{1}{2}+$, $(\ref{eq:b1uvjet})$ is then satisfied. By our construction in $(\ref{eq:thetajednadzba})$, one has: $\theta=1-\frac{\frac{1}{3}+}{\frac{1}{2}+}> \frac{1}{4}$. Hence, $\rho_0:=\frac{3\theta}{4}+3(1-2b)>0$.

\vspace{2mm}

Thus, by $(\ref{eq:LwXsb})$, and by definition of $B$ it follows that for $w \in B$:

$$\|Lw\|_{X^{s,b}} \leq c\delta^{\frac{1-2b}{2}}\|\Phi\|_{H^s} + c_2 \delta^{\frac{3\theta}{4}+2(1-2b)} \|w\|_{X^{s,b}}
\|w\|_{X^{1,b}}^2$$
\vspace{3mm}
$$\leq c\delta^{\frac{1-2b}{2}}\|\Phi\|_{H^s} + c_3 \delta^{\frac{1-2b}{2}} \|\Phi\|_{H^s} \delta^{\frac{3\theta}{4}+3(1-2b)}
\|\Phi\|_{H^1}^2.$$
\vspace{2mm}
Similarly, for $v,w \in B$, one has:

$$\|Lv-Lw\|_{X^{1,b}} \leq c_1 \delta^{\frac{3\theta}{4}+2(1-2b)} (\|v\|_{X^{1,b}}+\|w\|_{X^{1,b}})^2\|v-w\|_{X^{1,b}}$$
\vspace{3mm}
$$\leq c_2 \delta^{\frac{3\theta}{4}+3(1-2b)}\|\Phi\|_{H^1}^2 \|v-w\|_{X^{1,b}}.$$
\vspace{2mm}
We now argue as in \cite{SoSt1} to obtain a fixed point $v \in B$.
We then take $\mathcal{D}$'s of both sides and use $(\ref{eq:DLwX0b})$. Now, we have to estimate:

$$\|\mathcal{D}((V*|v_{\delta}|^2)v_{\delta})\|_{X^{0,b-1}}.$$

\vspace{2mm}

Arguing as before, it follows that this expression is:

$$\lesssim \delta^{\rho_0} \|\mathcal{D}v\|_{X^{0,b}} \|v\|_{X^{1,b}}^2$$

\vspace{2mm}

Namely, in the analogue of $J_{k_1,k_2,k_3}$, we can replace the $2^{k_1 s}$ by $\theta_{2^{k_1}}$, which is equal to $\frac{2^{k_1 s}}{N^s}$ if $2^{k_1} \geq N$, and $1$ otherwise. One then argues as in \cite{SoSt1}, and $(\ref{eq:properties of v2}),(\ref{eq:properties of v3})$ immediately follow.

\vspace{2mm}

We now check uniqueness, i.e. $(\ref{eq:properties of v1})$. Namely, we suppose that:

\begin{equation}
\label{eq:uniqueness}
\begin{cases}
i u_t + \Delta u=(V*|u|^2)u,x \in \mathbb{T}^2,t \in \mathbb{R}\\
i v_t + \Delta v=(V*|v|^2)v,x \in \mathbb{T}^2,t \in \mathbb{R}\\
u|_{t=0}=v|_{t=0} \in H^s(\mathbb{T}^2),s>1.
\end{cases}
\end{equation}
\vspace{2mm}
We are assuming that $u$ is a well-posed solution to $(\ref{eq:Hartree})$ on $\mathbb{T}^2$, and hence $\|u(t)\|_{H^s}$ satisfies exponential bounds, as was noted in the Introduction. Furthermore, since $v \in X^{s,\frac{1}{2}+}$, by Sobolev embedding in time, it follows that $v \in L^{\infty}_t H^s_x$. Consequently, there exist $A,B>0$ such that, for all $t \in \mathbb{R}$, one has:

\begin{equation}
\label{eq:uvexpbound}
\|u(t)\|_{H^s},\|v(t)\|_{H^s} \leq Ae^{B|t|}.
\end{equation}
\vspace{2mm}
We observe:

$$u(t)-v(t)=-i \int_0^t S(t-t')((V*|u|^2)u-(V*|v|^2)v)(t') dt'.$$
\vspace{2mm}
We take $L^2$ norms in $x$ and use Minkowski's inequality to deduce:

\begin{equation}
\label{eq:L2difference}
\|u(t)-v(t)\|_{L^2_x} \leq \int_0^t \|(V*|u|^2)u-(V*|v|^2)v\|_{L^2_x} dt'.
\end{equation}

In order to bound the integral, we need the two following bounds, which follow from H\"{o}lder's inequality, Young's inequality, and Sobolev embedding \footnote{Note that we are considering $s>1$.}.

$$\|(V*(u_1u_2))u_3\|_{L^2_x} \leq \|V*(u_1u_2)\|_{L^{\infty}_x}\|u_3\|_{L^2_x}
\leq \|V\|_{L^1_x}\|u_1\|_{L^{\infty}_x}\|u_2\|_{L^{\infty}_x}\|u_3\|_{L^2_x}$$
\vspace{2mm}
\begin{equation}
\label{eq:L2difference2}
\leq \|u_1\|_{H^s_x}\|u_2\|_{H^s_x}\|u_3\|_{L^2_x}.
\end{equation}

\vspace{2mm}

Also:

$$\|(V*(u_1u_2))u_3\|_{L^2_x} \leq \|V*(u_1u_2)\|_{L^2_x} \|u_3\|_{L^{\infty}_x}
\leq \|V\|_{L^1_x} \|u_1u_2\|_{L^2_x} \|u_3\|_{L^{\infty}_x} $$
\vspace{2mm}
\begin{equation}
\label{eq:L2difference3}
\leq \|V\|_{L^1_x}\|u_1\|_{L^2_x}\|u_2\|_{L^{\infty}_x}\|u_3\|_{L^{\infty}_x}\leq \|u_1\|_{L^2_x}\|u_2\|_{H^s_x}\|u_3\|_{H^s_x}.
\end{equation}
\vspace{2mm}
Substituting $(\ref{eq:L2difference2})$ and $(\ref{eq:L2difference3})$ into $(\ref{eq:L2difference})$, and using $(\ref{eq:uvexpbound}$, it follows that:

$$\|u(t)-v(t)\|_{L^2_x} \lesssim \int_0^t (\|u\|_{H^s}+\|v\|_{H^s})^2\|u-v\|_{L^2_x} dt'
\lesssim \int_0^t  e^{2 \beta t'} \|u-v\|_{L^2_x} dt'.$$

\vspace{2mm}

By Gronwall's inequality, it follows that on $[0,t]$, one has $\|u-v\|_{L^2_x}=0$, hence $u=v$.
The same argument works for negative times. $(\ref{eq:properties of v1})$ now follows.

\vspace{2mm}
Arguing as in \cite{SoSt1}, we note that all the implied constants depend on $s$, energy, and mass, and that they are continuous in energy and mass.
\vspace{2mm}

This proves Proposition \ref{Proposition 3.1}.
\end{proof}

\subsection{Appendix B: Remarks on the scattering result of Dodson}
Let us briefly explain why the $L^2$-scattering result of Dodson \cite{Do} for the defocusing cubic NLS on $(\mathbb{R}^2)$
\begin{equation}
\label{eq:3CubicNLS}
\begin{cases}
i u_t + \Delta u=|u|^2u, x \in \mathbb{R}^2, t \in \mathbb{R}\\
u|_{t=0}=\Phi \in H^s(\mathbb{R}^2),\,s>1.
\end{cases}
\end{equation}
\vspace{2mm}
can be used to deduce scattering in $H^s$ of the same equation, assuming that the initial data $\Phi$ lies in $H^s$. In other words, we want to justify the \emph{persistence of regularity phenomenon} for scattering.  We note that a similar argument is given in \cite{CKSTT10}.

Let $u$ be a global solution to $(\ref{eq:3CubicNLS})$.
In \cite{Do}, it is shown that whenever $\Phi \in L^2$, $u$ satisfies the spacetime bound:

\begin{equation}
\label{eq:4spacetimebound}
\|u\|_{L^4_{t,x}(\mathbb{R}^2 \times \mathbb{R})} < \infty.
\end{equation}
It can be seen that $(\ref{eq:4spacetimebound})$ implies scattering in $L^2$.
Given $s>1$, and assuming that $\Phi \in H^s$, we are interested in obtaining:
\begin{equation}
\label{eq:4spacetimebounds}
\|D^s u\|_{L^4_{t,x}(\mathbb{R}^2 \times \mathbb{R})} < \infty.
\end{equation}
In order to prove $(\ref{eq:4spacetimebounds})$, we start with $T \in \mathbb{R}$ and we observe that for all $t \in \mathbb{R}$, one has:

\begin{equation}
\label{eq:4Duhamel1}
u(t)=S(t-T)u(T)-i \int_{T}^{t} S(t-\tau) (|u|^2u)(\tau) d\tau.
\end{equation}
Taking $D^s$ on both sides, it follows that:
$$D^su(t)=S(t-T)D^su(T)-i \int_{T}^{t} S(t-\tau) D^s(|u|^2u)(\tau) d\tau.$$
We suppose that $I$ is an closed interval in $\mathbb{R}$ whose left endpoint is $T$ and whose right endpoint can be $+\infty$. 
By Strichartz estimates, we deduce:
$$\|D^su\|_{L^4_{t,x}(I \times \mathbb{R}^2)} \lesssim \|D^su(T)\|_{L^2_x(\mathbb{R}^2)} 
+\|D^s(|u|^2u)\|_{L^{\frac{4}{3}}_{t,x}(I \times \mathbb{R}^2)}.$$
By using the Fractional Leibniz Rule and H\"{o}lder's inequality, this implies:
\begin{equation}
\label{eq:4Duhamel}
\|D^su\|_{L^4_{t,x}(I \times \mathbb{R}^2)} \lesssim \|D^su(T)\|_{L^2_x(\mathbb{R}^2)} 
+\|D^su\|_{L^4_{t,x}(I \times \mathbb{R}^2)}\|u\|^2_{L^4_{t,x}(I \times \mathbb{R}^2)}.
\end{equation}
Given $\epsilon>0$, by $(\ref{eq:4spacetimebound})$, we can make the interval $I$ small enough so that:
\begin{equation}
\label{eq:4Ismall}
\|u\|_{L^4_{t,x}(I \times \mathbb{R}^2)} \leq \epsilon.
\end{equation}
Choosing $\epsilon$ small enough, $(\ref{eq:4Duhamel})$, and $(\ref{eq:4Ismall})$ imply:
\begin{equation}
\label{eq:4spacetimeboundslocal}
\|D^su\|_{L^4_{t,x}(I \times \mathbb{R}^2)} \lesssim \|D^su(T)\|_{L^2_x(\mathbb{R}^2)} =
\|u(T)\|_{H^s_x(\mathbb{R}^2)}.
\end{equation}
We now cover $\mathbb{R}$ by such intervals $I$, with a small modification when we take the left endpoint of the interval to be $-\infty$. The bound $(\ref{eq:4spacetimebounds})$ now follows.

Let us now observe why $(\ref{eq:4spacetimebounds})$ implies scattering in $H^s$. Namely, given $\delta>0$ small, we can find $T(\delta)>0$ such that:

\begin{equation}
\label{eq:4largetDsu}
\|D^su\|_{L^4_{t,x}([T(\delta),+\infty) \times \mathbb{R}^2)} \leq \delta.
\end{equation}
We use $(\ref{eq:4Duhamel1})$, Strichartz estimates and we argue as before to obtain that for all $t \geq T(\delta)$, one has:

$$\|D^su(t)-S(t-T(\delta))D^su(T(\delta))\|_{L^{\infty}_tL^2_x([T(\delta),+\infty) \times \mathbb{R}^2)} \lesssim 
\|D^su\|_{L^4_{t,x}([T(\delta),+\infty) \times \mathbb{R}^2)} \|u\|^2_{L^4_{t,x}([T(\delta),+\infty) \times \mathbb{R}^2)}.$$
Using $(\ref{eq:4spacetimebound})$ and $(\ref{eq:4largetDsu})$, it follows that, for all $t \geq T(\delta)$:
\begin{equation}
\label{eq:4largetimedifference}
\|D^su(t)-S(t-T(\delta))D^su(T(\delta))\|_{L^{\infty}_tL^2_x([T(\delta),+\infty) \times \mathbb{R}^2)}
\lesssim \delta.
\end{equation}
We now let $\delta_k:=2^{-k} \rightarrow 0$, and we choose $T(\delta_k)$ as above such that $T(\delta_k) \rightarrow +\infty$.
Using $(\ref{eq:4largetimedifference})$ and the unitarity of $S(t)$ on $L^2$, it follows 
that $(S(-T(\delta_k)u(T(\delta_k)))$ is Cauchy in $H^s$. By completeness, there exists $u_{+} \in H^s$ such that $S(-T(\delta_k))u(T(\delta_k)) \stackrel{H^s} \longrightarrow u_{+}$. By using $(\ref{eq:4largetimedifference})$ again, we note that:
$$S(-t)u(t) \stackrel{H^s} \longrightarrow u_{+},\,\mbox{as}\,t \rightarrow +\infty.$$
By unitarity, it follows that, for the obtained $u_{+} \in H^s$, one has:
\begin{equation}
\label{eq:4scattering2D1}
\|u(t)-S(t)u_{+}\|_{H^s_x(\mathbb{R}^2)} \rightarrow 0,\,\mbox{as}\,t \rightarrow +\infty.
\end{equation}
An analogous argument shows that there exists $u_{-} \in H^s$ such that:
\begin{equation}
\label{eq:4scattering2D2}
\|u(t)-S(t)u_{-}\|_{H^s_x(\mathbb{R}^2)} \rightarrow 0,\,\mbox{as}\,t \rightarrow -\infty.
\end{equation}
Hence, the $H^s$ scattering result for the cubic NLS $(\ref{eq:3CubicNLS})$ follows, thus implying uniform bounds on $\|u(t)\|_{H^s}$ whenever $\Phi \in H^s$.

\end{document}